\newcommand{\blue}{\color{black}}
\newcommand{\bx}{\boldsymbol{x}}
\newcommand{\by}{\boldsymbol{y}}
\newcommand{\bz}{\boldsymbol{z}}
\newcommand{\bxi}{\boldsymbol{\xi}}
\newcommand{\bbeta}{\boldsymbol{\eta}}
\newcommand{\bzeta}{\boldsymbol{\zeta}}
\newcommand{\bq}{\boldsymbol{q}}
\newcommand{\bQ}{\boldsymbol{Q}}
\newcommand{\bS}{\boldsymbol{S}}
\newcommand{\bp}{\boldsymbol{p}}
\newcommand{\bff}{\boldsymbol{f}}
\newcommand{\bc}{\boldsymbol{c}}
\newcommand{\bA}{\boldsymbol{A}}
\newcommand{\bL}{\boldsymbol{L}}
\newcommand{\bB}{\boldsymbol{B}}
\newcommand{\bH}{\boldsymbol{H}}
\newcommand{\bR}{\boldsymbol{R}}
\newcommand{\bn}{\boldsymbol{n}}
\newcommand{\bm}{\boldsymbol{m}}
\newcommand{\br}{\boldsymbol{r}}
\newcommand{\bl}{\boldsymbol{l}}
\newcommand{\bI}{\boldsymbol{I}}
\newcommand{\olambda}{\overline{\lambda}}
\newcommand{\ulambda}{\underline{\lambda}}
\newcommand{\osigma}{\overline{\sigma}}
\newcommand{\usigma}{\underline{\sigma}}
\newcommand{\oosigma}{\overline{\osigma}}
\newcommand{\uusigma}{\underline{\usigma}}
\newcommand{\bzero}{\boldsymbol{0}}
\newcommand{\st}{\mathop{\text{\normalfont s.t.}}}
\newcommand{\bbZ}{\mathbb{Z}}
\newcommand{\cG}{\mathcal{G}}
\newcommand{\cV}{\mathcal{V}}
\newcommand{\cA}{\mathcal{A}}
\newcommand{\cB}{\mathcal{B}}
\newcommand{\cL}{\mathcal{L}}
\newcommand{\cE}{\mathcal{E}}
\newcommand{\cP}{\mathcal{P}}
\newcommand{\cK}{\mathcal{K}}
\newcommand{\cN}{\mathcal{N}}
\newcommand{\cI}{\mathcal{I}}
\newcommand{\cJ}{\mathcal{J}}
\title{Exponential Decay of Sensitivity in \\  Graph-Structured Nonlinear Programs}
\author{
  Sungho Shin\thanks{Department of Chemical and Biological Engineering, University of Wisconsin-Madison, Madison, WI (\email{sungho.shin@wisc.edu})},
  \and Mihai Anitescu\thanks{Mathematics and Computer Science Division, Argonne National Laboratory, Argonne, IL and Department of Statistics, University of Chicago, Chicago, IL(\email{anitescu@mcs.anl.gov})},
  \and Victor M. Zavala\thanks{Department of Chemical and Biological Engineering, University of Wisconsin-Madison, Madison, WI and Mathematics and Computer Science Division, Argonne National Laboratory, Argonne, IL (\email{victor.zavala@wisc.edu})}}
\begin{document}

\maketitle

\begin{abstract}
  We study solution sensitivity for nonlinear programs (NLPs) whose structures are induced by graphs. These NLPs arise in many applications such as dynamic optimization, stochastic optimization, optimization with partial differential equations, and network optimization. We show that for a given pair of nodes, the sensitivity of the primal-dual solution at one node against a data perturbation at the other node decays exponentially with respect to the distance between these two nodes on the graph. In other words, the solution sensitivity decays as one moves away from the perturbation point. This result, which we call exponential decay of sensitivity, holds under the strong second-order sufficiency condition and the linear independence constraint qualification. We also present conditions under which the decay rate remains uniformly bounded; this allows us to characterize the sensitivity behavior of NLPs defined over subgraphs of infinite graphs. The theoretical developments are illustrated with numerical examples.
  
\end{abstract}

\begin{keywords}
  sensitivity, nonlinear programming, graphs.
\end{keywords}

\begin{AMS}
  49Q12, 90C30, 90C35
\end{AMS}

\section{Introduction}\label{sec:intro}
{Many decision-making problems in science and engineering are formulated as {\it structured} NLPs whose algebraic structures are induced by graphs. Examples include dynamic optimization (the graph is a time horizon; e.g., optimal control, long-term planning, and state estimation) \cite{betts2010practical,biegler2010nonlinear}, multi-stage stochastic programs (the graph is a scenario tree) \cite{shapiro2014lectures,pereira1991multi}, optimization problems constrained by discretized partial differential equations (the graph is a discretization mesh) \cite{biegler2003large}, and network optimization (the graph is a physical network; e.g., energy networks, supply chains)  \cite{dembo1989or,jalving2019graph,coffrin2018powermodels,zlotnik2015optimal}. Typical graphs associated with such problems are illustrated in Figure \ref{fig:graphs}. This observation motivates us to consider a {\it unifying abstraction} for structured optimization problems that we call {\it graph-structured nonlinear programs} (gsNLPs). The abstraction allows us to study seemingly different problem classes under a common perspective and allows us to establish fundamental properties they share. This paper studies the solution sensitivity of the gsNLPs.}

\begin{figure}[t]
  \centering
  \setlength{\tabcolsep}{0.25em}\footnotesize
  \begin{tabular}{@{}cccc@{}}
    Dynamic Optimization&Stochastic Optimization&PDE Optimization&Network Optimization\\[.05in]
    \includegraphics[width=.23\textwidth]{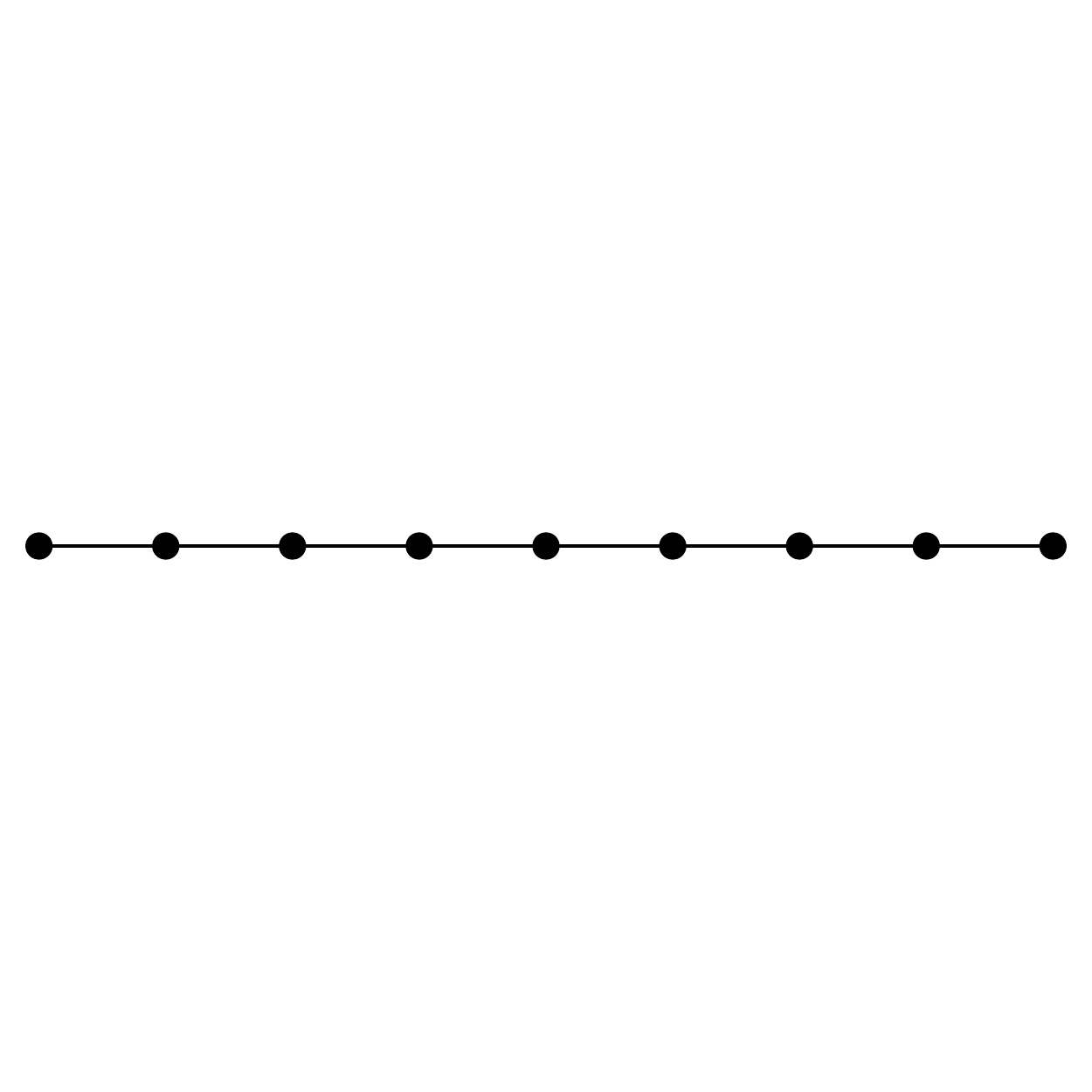}&\includegraphics[width=.23\textwidth]{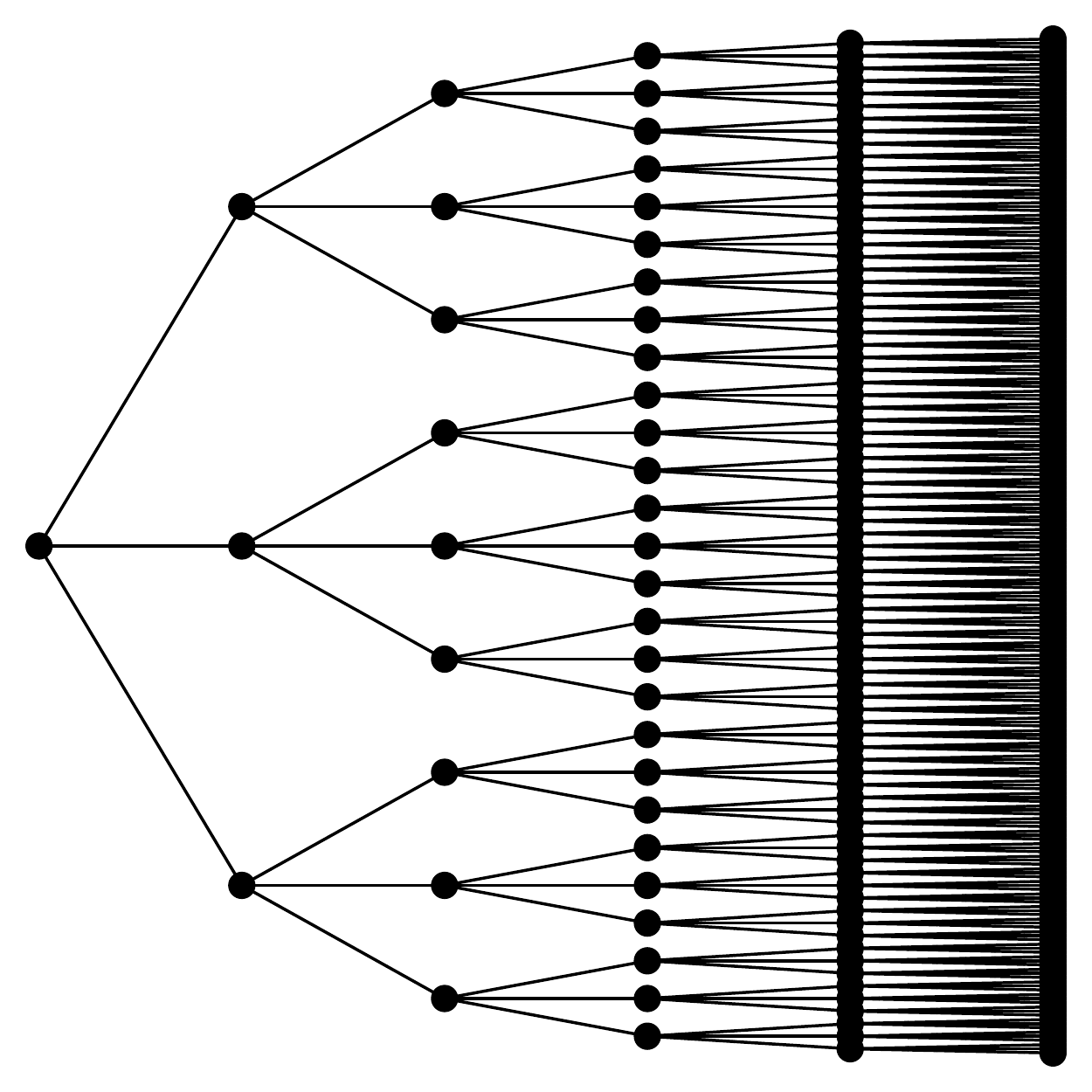}&\includegraphics[width=.23\textwidth]{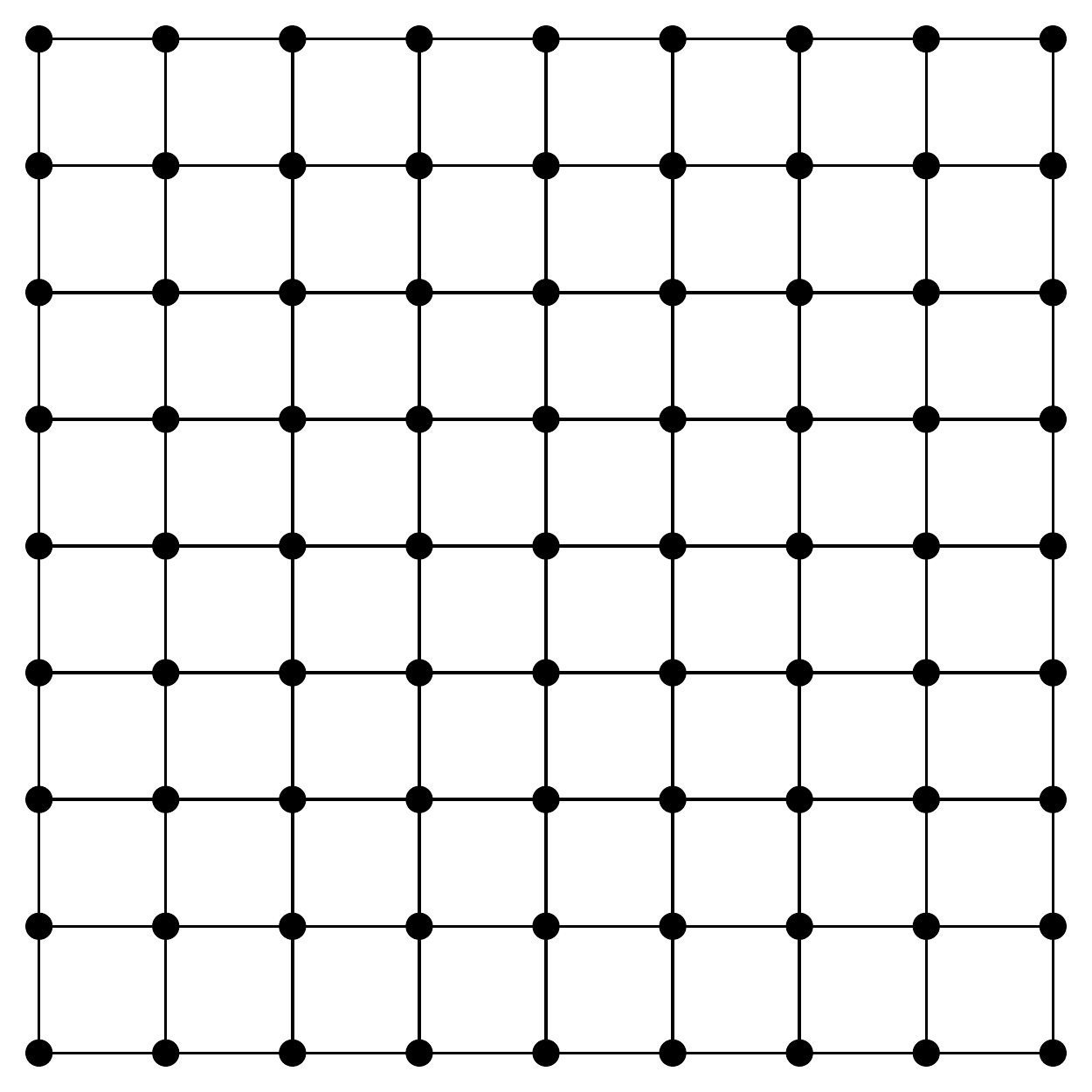}&\includegraphics[width=.23\textwidth]{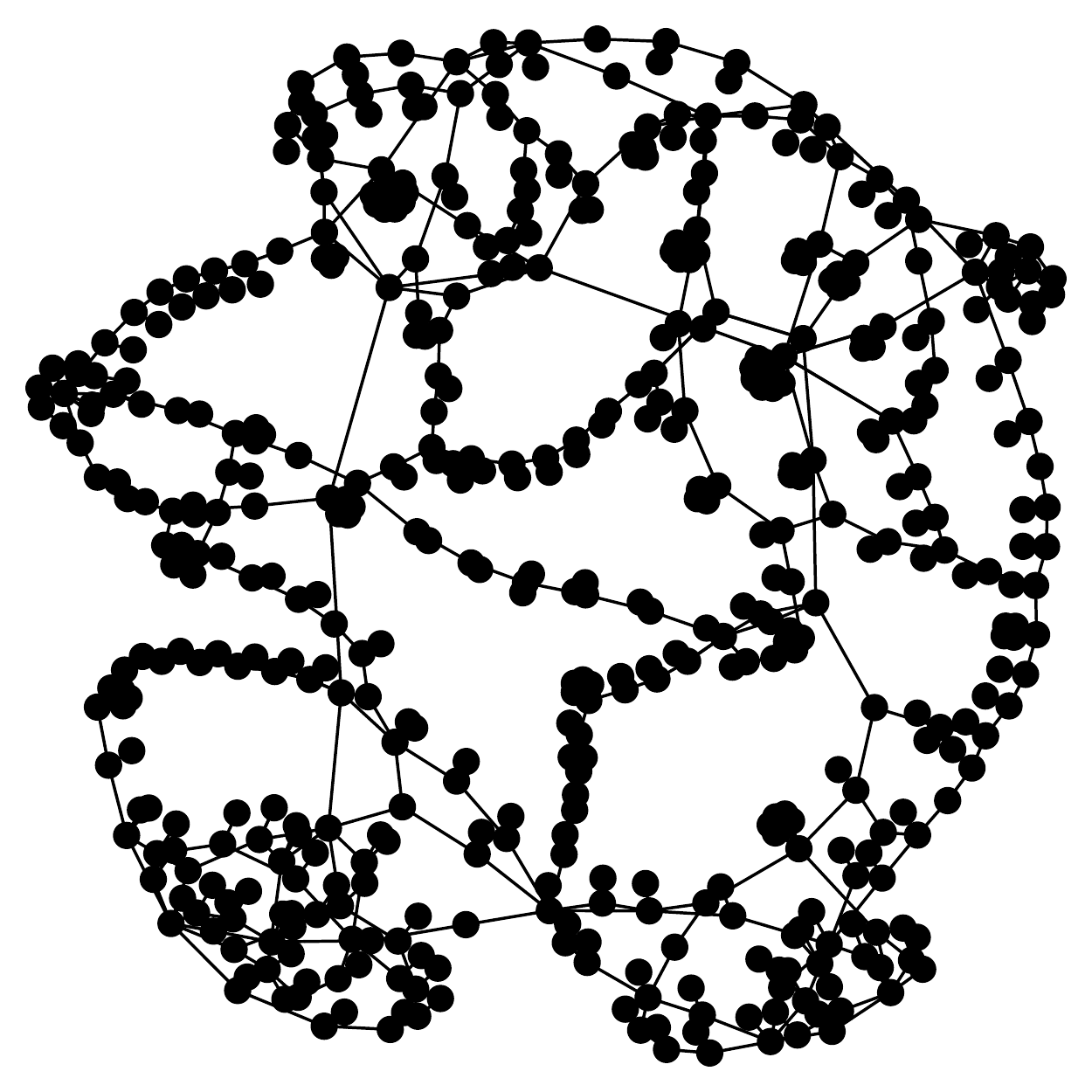}
  \end{tabular}
  \caption[Illustration of graphs associated with gsNLPs.]{Illustration of graphs associated with various graph-structured NLPs.} \label{fig:graphs}
\end{figure}

Our work is motivated by the following question:
\begin{align}\label{question}
  \begin{gathered}
    \text{\it How does the primal-dual solution at node $i\in \cV$ change}\\
    \text{\it when the data at node $j\in \cV$ is perturbed?}
  \end{gathered}
  \tag{Q1}
\end{align}
{\blue Here, $\cV$ is the set of nodes}. We provide an answer to \eqref{question} by identifying conditions under which there exist {\it nodal} sensitivity coefficients $\{C_{ij}\in\mathbb{R}_{\geq 0}\}_{i,j\in \cV}$ satisfying:
\begin{align}\label{eqn:sens}
  \Vert z^\dag_i(\bp)- z^\dag_i(\bp')\Vert \leq \sum_{j\in \cV} C_{ij} \Vert p_j - p'_j\Vert,\quad i\in \cV ,
\end{align}
where $z_i^\dag(\bp)$ and $z_i^\dag(\bp')$ are primal-dual solutions of the gsNLP \eqref{eqn:prob} at node $i\in\cV$ for data points $\bp$ and $\bp'$, respectively. Our main result (Theorem \ref{thm:main}) shows that if: (i) the strong second order sufficiency condition (SSOSC) and the linear independence constraint qualification (LICQ) hold at the reference data, and (ii) $\bp$ and $\bp'$ are sufficiently close to {\blue the reference data}, then \eqref{eqn:sens} holds with $C_{ij}= \Upsilon \rho^{d_\cG(i,j)}$. Here, $\Upsilon>0$ and $\rho\in(0,1)$ are constants, $d_\cG(i,j)$ is the graph distance between nodes $i$ and $j$ on $\cG$. In other words, {\em solution sensitivity decays exponentially} with respect to the distance to the perturbation point. We call this property {\it exponential decay of sensitivity} (EDS). This result is a specialization of classical sensitivity results \cite{robinson1980strongly,dontchev2009implicit,robinson1974perturbed,fiacco1976sensitivity,bonnans2013perturbation}.

The constants $(\Upsilon,\rho)$, the magnitude and decay rate of sensitivity, depend on the singular values of the Hessian of the Lagrangian. As such, we establish conditions under which the singular values remain uniformly bounded. This uniformity property will be particularly relevant when analyzing the sensitivity of NLPs that are defined over subgraphs of an infinite graph (a graph with an arbitrarily large domain). Such graphs can be used to analyze the limiting behavior of certain problem classes such as dynamic optimization problems over infinite horizons or of PDE optimization problems over unbounded domains. {We show that $(\Upsilon,\rho)$ remain uniform in the graph size under what we call {\it uniform regularity} conditions: {\it uniformly bounded} Lagrangian Hessian, {\it uniform} SSOSC, and {\it uniform} LICQ}. Unfortunately, these conditions are difficult to verify in practice because the problem becomes arbitrarily large. Accordingly, we establish {\it composable} sufficient conditions for uniform regularity that can be verified in practice; in particular, we show that {(i) uniformly bounded graph degree and second-order derivative functions imply uniformly bounded Lagrangian Hessian and (ii) {\it block}  SSOSC and LICQ conditions (assuming uniform SSOSC and LICQ hold over individual {\it blocks}) guarantee uniform SSOSC and LICQ.}

Question \eqref{question} has been recently addressed in specific settings such as linear and nonlinear dynamic optimization \cite{shin2019parallel,xu2018exponentially,Na2019Exponential,na2020overlapping} and graph-structured quadratic programs \cite{shin2020decentralized,shin2020overlapping}; our work generalizes such results. Addressing \eqref{question} is crucial for understanding solution stability {(in the sense of \cite{kojima1980strongly})}, for designing approximation schemes (often cast as parametric perturbations)  \cite{diehl2002real,biegler2013survey,xu2018exponentially,shin2020diffusing,na2020superconvergence}, and for designing solution algorithms \cite{shin2019parallel,shin2020overlapping,na2020overlapping}. For instance, it has been recently shown that EDS plays a central role in assessing the impact of coarsening schemes \cite{grune2020exponential,shin2020diffusing} for dynamic optimization and in establishing convergence of overlapping Schwarz algorithms for graph-structured problems \cite{shin2019parallel,shin2020overlapping,na2020overlapping}. From an application standpoint, our results seek to provide new insights on how perturbations propagate through graphs and on how the problem formulation influences such propagation. Specifically, we provide empirical evidence that positive objective curvature and constraint flexibility tend to dampen propagation (promote sensitivity decay). Such insights can be used, for instance, to design systems that dampen perturbations or to identify system elements that are sensitive to perturbations. 

The paper is organized as follows: In Section \ref{sec:settings} we introduce the notation and settings. In Section \ref{sec:slinear} we present basic results for {graph-structured matrix} properties. These results generalize the results in \cite{shin2020decentralized,demko1984decay} and will serve as an analytical tool for sensitivity analysis. In Section \ref{sec:sens} we present the main sensitivity results; specifically, we apply {graph-structured matrix} properties to classical NLP sensitivity theory to establish bounds on the nodal sensitivity coefficients in \eqref{eqn:sens}. {In Section \ref{sec:uniform}, we study uniform regularity conditions, which guarantee the uniform boundedness of the sensitivity coefficients}. Numerical results are provided in Section \ref{sec:num}, followed by conclusions in Section \ref{sec:conc}.

\section{Notation and Settings}\label{sec:settings}
{\it Notation:} The set of real numbers and the set of integers are denoted by $\mathbb{R}$ and $\mathbb{I}$, respectively. We define $\mathbb{I}_{A}:=\mathbb{I}\cap A$, where $A$ is a set; $\mathbb{I}_{>0}:=\mathbb{I}\cap(0,\infty)$; $\mathbb{I}_{\geq 0}:=\mathbb{I}\cap[0,\infty)$; $\mathbb{R}_{>0}:=(0,\infty)$; and $\mathbb{R}_{\geq 0}:=[0,\infty)$. For $A\subseteq X$, $f:X\rightarrow Y$, and $x\in X$, where $X$ and $Y$ are linear spaces, $A+x := \{x'+x:x'\in A\}$ and $f(X):=\{f(x)\in Y:x\in X\}$. Vectors are treated as column vectors. We use the syntax: $[M_1;\cdots;M_n]:=[M_1^\top\,\cdots\,M_n^\top]^\top$; $\{M_i\}_{i\in U}:=[M_{i_1}; \cdots; M_{i_{m}}]$; $\{M_{i,j}\}_{i\in U,j\in V}:=\{\{M_{i,j}^\top\}_{j\in V}^\top\}_{i\in U}$, where, $U=\{i_1<\cdots<i_{m}\}$ and $V=\{j_1<\cdots<j_{n}\}$ are strictly ordered sets. Furthermore, $v[i]$ is the $i$-th component of vector $v$; $M[i,j]$ is the $(i,j)$-th component of matrix $M$; $v[I]:=\{v[i]\}_{i\in I}$; $M[I,J]:=\{M[i,j]\}_{i\in I,j\in J}$. For a function $\phi:\mathbb{R}^{n}\rightarrow \mathbb{R}$ and variable vectors $y\in\mathbb{R}^{p}$, $z\in\mathbb{R}^{q}$, $\nabla^2_{yz}\phi(x):=\{\frac{\partial^2}{\partial y[i]\partial z[j]}\phi(x)\}_{i\in\mathbb{I}_{[1,p]},j\in\mathbb{I}_{[1,q]}}$. For a vector function $\varphi:\mathbb{R}^{n}\rightarrow \mathbb{R}^m$ and a variable vector $w\in\mathbb{R}^{s}$, $\nabla_{w}\varphi(x):=\{\frac{\partial}{\partial w[j]}\varphi(x)[i]\}_{i\in\mathbb{I}_{[1,m]},j\in\mathbb{I}_{[1,s]}}$. We use the shorthand notation $\nabla^2\phi(x):=\nabla^2_{xx}\phi(x)$, $\nabla\varphi(x):=\nabla_x\varphi(x)$. Vector 2-norms and induced 2-norms of matrices are denoted by $\Vert\cdot\Vert$. For matrices $A$ and $B$, $A\succ(\succeq) B$ indicates that $A-B$ is positive (semi)-definite while $A>(\geq) B$ denotes a componentwise inequality. The identity matrix is denoted as $\bI$ and the zero matrix or vector are denoted as $\bzero$. Specific notations will be introduced at the first appearance. {\blue We denote the largest and smallest eigenvalues of a symmetric matrix and non-trivial singular values of general matrices by $\olambda(\cdot)$, $\ulambda(\cdot)$, $\osigma(\cdot)$, and $\usigma(\cdot)$}.

{\it Settings:} We study gsNLPs of the following form: 
\begin{subequations}\label{eqn:prob}
  \begin{align}
    \min_{\{x_i\in\mathbb{R}^{r_i}\}_{i\in \cV}}\;& \sum_{i\in \cV}f_i(\{x_j\}_{j\in \cN_\cG[i]};\{p_j\}_{j\in \cN_\cG[i]})\label{eqn:prob-obj}\\
    \st\;& c^E_i(\{x_j\}_{j\in \cN_\cG[i]};\{p_j\}_{j\in \cN_\cG[i]}) = 0,\;i\in \cV,\;(y^E_i)\label{eqn:prob-eq}\\
                                                  &c^I_i(\{x_j\}_{j\in \cN_\cG[i]};\{p_j\}_{j\in \cN_\cG[i]}) \geq 0,\;i\in \cV,\;(y^I_i)\label{eqn:prob-ineq},
  \end{align}
\end{subequations}
where $\cG=(\cV,\cE)$ is an undirected graph, $\cV$ is a strictly ordered node set, $\cE\subseteq \{\{i,j\}\subseteq \cV: i\neq j\}$ is the edge set, and $\cN_\cG[i]:=\{j\in \cV:\{i,j\}\in \cE\}\cup\{i\}$ denotes the closed neighborhood\footnote{Note that closed neighborhood is different from the standard notion of neighborhood (also called open neighborhood), defined as $\cN_\cG(i):=\{j\in \cV:\{i,j\}\in \cE\}$. The closed neighborhood of a node contains itself whereas the open neighborhood does not.} of $i\in \cV$. For each node $i\in \cV$, $x_i\in\mathbb{R}^{r_i}$ is the primal variable; $p_i\in\mathbb{R}^{l_i}$ is the data; $f_i:\prod_{j\in \cN_\cG[i]}\mathbb{R}^{r_j}\times\prod_{j\in \cN_\cG[i]}\mathbb{R}^{l_j}\rightarrow \mathbb{R}$ is the objective function; $c^E_i:\prod_{j\in \cN_\cG[i]}\mathbb{R}^{r_j}\times \prod_{j\in \cN_\cG[i]}\mathbb{R}^{l_j}\rightarrow \mathbb{R}^{m^E_i}$ is the equality constraint function; $c^I_i:\prod_{j\in \cN_\cG[i]}\mathbb{R}^{r_j}\times \prod_{j\in \cN_\cG[i]}\mathbb{R}^{l_j}\rightarrow \mathbb{R}^{m^I_i}$ is the inequality constraint function; and $y^E_i\in\mathbb{R}^{m^E_i}$ and $y^I_i\in\mathbb{R}^{m^I_i}$ are the dual variables associated with \eqref{eqn:prob-eq} and \eqref{eqn:prob-ineq}. The objective and constraint functions of node $i$ depend on the variables $\{x_j\}_{j\in \cN_\cG[i]}$ and data $\{p_j\}_{j\in \cN_\cG[i]}$ of the neighboring nodes. That is, node $i$ is coupled algebraically only to its neighbors, and the topology of such connectivity is dictated by $\cG$.

{\blue
  We can express \eqref{eqn:prob} as a standard NLP of the following form:
 \begin{subequations}
   \begin{align}
     P(\bp):\;\min_{\bx}\;&\bff(\bx;\bp)\\
     \st\;&\bc^E(\bx;\bp)=\bzero,\; (\by^E)\\
                          &\bc^I(\bx;\bp)\geq \bzero,\;\;(\by^I).
   \end{align}
 \end{subequations}
 Here, we compact the notation as: $\bx:=\{x_i\}_{i\in \cV}$; $\by^E:=\{y^E_i\}_{i\in \cV}$; $\by^I:=\{y^I_i\}_{i\in \cV}$; $\bp:=\{p_i\}_{i\in \cV}$; $\br=\sum_{i\in \cV} r_i$; $\bm^E=\sum_{i\in \cV} m^E_i$; $\bm^I=\sum_{i\in \cV} m^I_i$; $\bl = \sum_{i\in \cV}l_i$; $\bff(\bx;\bp):=\sum_{i\in \cV}f_i(\{x_j\}_{j\in \cN_\cG[i]};\{p_j\}_{j\in \cN_\cG[i]})$; $\bc^E(\bx;\bp):=\{c^E_i(\{x_j\}_{j\in \cN_\cG[i]};\{p_j\}_{j\in \cN_\cG[i]})\}_{i\in \cV}$; and $\bc^I(\bx;\bp):=\{c^I_i(\{x_j\}_{j\in \cN_\cG[i]};\{p_j\}_{j\in \cN_\cG[i]})\}_{i\in \cV}$. We also define $c_i(\cdot):=[c^E_i(\cdot);c^I_i(\cdot)]$; $y_i:=[y^E_i;y^I_i]$; $z_i:=[x_i;y_i]$; $m_i=m^E_i+m^I_i$; $n_i=r_i+m_i$; $\by:=\{y_i\}_{i\in \cV}$; $\bz:=\{z_i\}_{i\in \cV}$;  $\bm=\sum_{i\in \cV} m_i$; $\bn=\sum_{i\in \cV} n_i$; and $\bc(\bx;\bp):=\{c_i(\{x_j\}_{j\in \cN_\cG[i]};\{p_j\}_{j\in \cN_\cG[i]})\}_{i\in \cV}$.
}

\begin{remark}\label{rem:gen-1}
  In some applications (e.g., energy networks), we encounter variables, data, objectives, and constraints associated with edges. Those are not explicitly expressed in  \eqref{eqn:prob}, but \eqref{eqn:prob} still can serve as an abstraction for such problems. In particular, such edge components can be captured within {\it super-nodes} that encapsulate edges; this is possible because the formulation allows for nodes with different numbers of variables and constraints. Alternatively, one may treat edges in the graph as nodes and rewrite the problem with a newly defined lifted graph $\widetilde{\cG}=(\widetilde{\cV},\widetilde{\cE})$, where $\widetilde{\cV}:=\cV\cup \cE$ and $\widetilde{\cE}:=\{\{i,e\}:i\in \mathcal{\cV},\;i\in e,\;e\in \mathcal{\cE}\}$ (an order needs to be assigned for $\widetilde{\cV}$).
\end{remark}

{
  \begin{remark}\label{rem:blackbox}
    Different graph structures can be imposed on the same NLP. For example, consider the following problem:
    \begin{subequations}\label{eqn:eg}
      \begin{align}
        \min_{x_1,x_2,x_3,x_4}\;&x_1^2+x_2^2+x_3^2+x_4^2\\
        \st\;& x_1+x_2 = 0,\quad(y_1)\label{eqn:eg-con-1}\\
        \;& x_2+x_3 = 0,\quad(y_2)\label{eqn:eg-con-2}\\
        \;& x_3+x_4 = 0,\quad(y_3).\label{eqn:eg-con-3}
      \end{align}
    \end{subequations}
    One can model the problem as a gsNLP with the graph with node set $\cV=\{1,2,3,4\}$ and edge set $\cE=\{\{1,2\},\{2,3\},\{3,4\}\}$; here, we assign variable $x_1,x_2,x_3,x_4$ into node $1,2,3,4$ and constraints \eqref{eqn:eg-con-1}, \eqref{eqn:eg-con-2}, \eqref{eqn:eg-con-3} into node $1,2,3$, respectively. Alternatively, one can use the graph with node set $\cV=\{1,2\}$ and edge set $\cE=\{\{1,2\}\}$, and assign variable $x_1,x_2$ into node $1$, variable $x_3,x_4$ into $2$, constraint \eqref{eqn:eg-con-1}, \eqref{eqn:eg-con-2} into node $1$, and  constraint \eqref{eqn:eg-con-1} into node $2$. Thus, there could be different gsNLP abstractions for the same NLP. Oftentimes, the graph comes from the modeler's insight into the problem. Obtaining modeler's insight can be straightforward in some applications (e.g., using time horizon as a graph for dynamic optimization problem), but it can be non-trivial when the problem is complex (e.g., dynamic/stochastic optimization over a network which also embeds PDEs). When a modeler's insight is not available, one can resort to formulate a gsNLP based on what we call a {\it primal-dual connectivity graph}. This graph can be obtained by viewing each primal-dual variable as a node. Then, we construct an edge set by using the connectivity pattern given by the problem. For example, the primal-dual connectivity graph for \eqref{eqn:eg} is $\cG=(\{1,2,\cdots,7\},\{\{1,5\},\{2,5\},\{2,6\},\{3,6\},\{3,7\},\{4,7\}\})$, where $x_1,\cdots x_4$ corresponds to node $1,\cdots,4$ and the dual $y_1,\cdots,y_3$ correspond to node $5,\cdots,7$, respectively. In this way, we can impose a graph structure on any NLP without a modeler's insight. However, in terms of interpretability, using modeler's insight may be beneficial. Moreover, the choice of the graph may have a strong impact to the usefulness of the subsequent analysis. For instance, as our results explore exponential decay with graph distance, choosing a graph with a sufficiently large diameter (the largest graph distance between a pair of nodes) facilitates the applicability of our results. 
  \end{remark}
}

\section{Graph-Structured Matrix Properties}\label{sec:slinear}
{This section derives basic properties of graph-structured matrices. A {\it graph-structured matrix} is a matrix that has a graph that imposes a structure to the matrix; in other words, the matrix does not arise from a graph, but the graph imposes a structure to the matrix. The results in this section will be crucial in establishing our main sensitivity results. Properties of graph-structured {\it positive definite} matrices are reported in \cite{shin2020decentralized,demko1984decay}; here, we establish properties for general (non-symmetric and indefinite) matrices.} We begin by introducing the notion of distance on graphs and establishing its basic properties.
\begin{definition}[Graph Distance and Diameter]\label{def:distance}
  The distance $d_\cG(i,j)$ between nodes $i,j\in \cV$ on graph $\cG=(\cV,\cE)$ is the number of edges in the shortest path connecting them; {if a shortest path does not exist, the distance is $+\infty$}. Furthermore, the diameter $D_\cG$ of $\cG$ is the longest distance between any pair of nodes in $\cV$.
\end{definition}
\begin{proposition}\label{prop:metric}
  The distance $d_\cG:\cV\times \cV \rightarrow \mathbb{I}_{\geq 0}{\cup\{\infty\}}$ is a metric on $\cV$; that is, (a) $d_\cG(i,j)\geq 0$ for any $i,j\in \cV$; (b) $i=j$ if and only if $d_\cG(i,j)=0$; (c) $d_\cG(i,j)=d_\cG(j,i)$ for any $i,j\in \cV$; (d) $d_\cG(i,j)\leq d_\cG(i,k)+d_\cG(k,j)$ for any $i,j,k\in \cV$.
\end{proposition}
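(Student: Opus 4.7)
The plan is to verify the four metric axioms in order, with standard graph-theoretic arguments, taking care to handle the $+\infty$ case consistently.

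For (a), non-negativity, I would simply observe that the count of edges in any path is a non-negative integer, and the only other value in the codomain is $+\infty$, so $d_\cG(i,j)\geq 0$ holds by definition. For (b), I would note that the trivial length-zero path from $i$ to $i$ is always available, so $d_\cG(i,i)=0$, and conversely if $d_\cG(i,j)=0$ then some path between $i$ and $j$ uses zero edges, forcing $i=j$. For (c), symmetry is immediate because $\cG$ is undirected: reversing the edge sequence of any $i$-to-$j$ path of length $k$ yields a valid $j$-to-$i$ path of length $k$ (and if no finite path exists in one direction, none exists in the other), so the shortest-path lengths agree.

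For (d), the triangle inequality, I would argue by path concatenation. If either $d_\cG(i,k)$ or $d_\cG(k,j)$ equals $+\infty$, the inequality holds trivially. Otherwise, take a shortest $i$-to-$k$ path $P_1$ of length $d_\cG(i,k)$ and a shortest $k$-to-$j$ path $P_2$ of length $d_\cG(k,j)$. Their concatenation is a walk from $i$ to $j$ that traverses exactly $d_\cG(i,k)+d_\cG(k,j)$ edges. Since any walk between two vertices can be reduced (by excising cycles between repeated vertices) to a path of no greater length, there exists an $i$-to-$j$ path of length at most $d_\cG(i,k)+d_\cG(k,j)$, whence $d_\cG(i,j)\leq d_\cG(i,k)+d_\cG(k,j)$.

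The proof is essentially routine; there is no real obstacle. The only mildly delicate point is ensuring the $+\infty$ value is handled consistently in all four axioms (in (a) it is non-negative by convention; in (b) equality to zero excludes it; in (c) the value $+\infty$ is symmetric in $i,j$ because reachability is a symmetric relation on an undirected graph; and in (d) the inequality is vacuous when a term on the right is $+\infty$). No earlier results from the excerpt are needed.
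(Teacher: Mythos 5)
Your proof is correct and complete; the paper itself omits the proof as straightforward, and your argument (including the path-concatenation plus walk-to-path reduction for the triangle inequality and the careful handling of the $+\infty$ case for disconnected node pairs) is exactly the standard verification one would expect.
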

The proof of this result is straightforward and thus omitted. {Note that the shortest path does not exist when the graph is not connected. } 

{A graph-structured matrix $X\in\mathbb{R}^{m\times n}$ is indexed by a graph $\cG=(\cV,\cE)$ and a pair of index set families $\cI:=\{I_{i}\}_{i\in \cV}$ and $\cJ:=\{J_{i}\}_{i\in \cV}$ that partition\footnote{In this paper, we call a family $\{X_1,\cdots,X_k\}$ of subsets of $X$ to be a partition if $\bigcup_{k=1}^K X_k=X$ and $X_1,\cdots,X_k$ are disjoint; here, {\it we allow $X_k$ to be empty sets}. Note that this differs from the standard definition of a partition, where the nonemptiness of $X_k$ is enforced. This modification allows us to handle nodes with empty variables, constraints, or data, which is useful in many settings.} $\mathbb{I}_{[1,m]}$ and $\mathbb{I}_{[1,n]}$, respectively. We refer to $X_{[i][j]}$ as the $[i][j]$-block of matrix $X$, which can later be associated with the node pair $(i,j)$. It should be noted that the short-hand notation $X_{[i][j]}:=X[I_i,J_j]$ assumes that the proper graph and index set families $(\cG,\cI,\cJ)$ are introduced or can be inferred from the context. We are now ready to introduce the concept of graph-induced {\it bandwidth}.} 

\begin{definition}[Graph-Induced Bandwidth]\label{def:bandwidth}
  Consider a matrix $X\in\mathbb{R}^{m\times n}$, a graph $\cG=(\cV,\cE)$, and index sets $\cI=\{I_i\}_{i\in \cV}$, $\cJ=\{J_i\}_{i\in \cV}$ that partition $\mathbb{I}_{[1,m]}$ and $\mathbb{I}_{[1,n]}$, respectively. Matrix $X$ is said to have bandwidth $B$ induced by $(\cG,\cI,\cJ)$, if $B$ is the smallest nonnegative integer such that $X_{[i][j]}=\bzero$ for any $i,j\in \cV$ with $d_\cG(i,j)> B$, where $X_{[i][j]}:=X[I_i,J_j]$.
\end{definition}
The graph-induced bandwidth defined above is a generalization of the standard notion of matrix bandwidth \cite[Section 1.2.1]{golub2012matrix}. If the matrix $X\in\mathbb{R}^{n\times n}$ is square, $\cV=\mathbb{I}_{[1,n]}$, $\cE=\{\{i,i+1\}\}_{i=1}^{n-1}$, and $\cI=\cJ=\{\{i\}\}_{i=1}^n$, then the bandwidth induced by $(\cG,\cI,\cJ)$ reduces to the standard definition of matrix bandwidth. 

{
  \begin{example}\label{eg:graph}
    Consider the following sparse matrix:
    \begin{align*}
      M=
      \begin{bmatrix}
        1&0&0&0&0&0&0&0\\
        -1&-1&1&0&0&0&0&0\\
        0&0&-1&-1&1&0&0&0\\
        0&0&0&0&1&-1&1&0
      \end{bmatrix}.
    \end{align*}
    The triple $(\cG,\cI,\cJ)$ is: $\cG=(\cV,\cE)$, $\cV:=\{1,2,3,4\}$, $\cE:=\{\{1,2\},\{2,3\},\{3,4\}\}$, $\cI:=\{\{1\},\{2\},\{3\},\{4\}\}$, and $\cJ:=\{\{1,2\},\{3,4\},\{5,6\},\{7,8\}\}$. The matrix $M$ can be written in the following block matrix form:
    \begin{align*}
      M=
      \begin{bmatrix}
        M_{[1][1]}&M_{[1][2]}&M_{[1][3]}&M_{[1][4]}\\
        M_{[2][1]}&M_{[2][2]}&M_{[2][3]}&M_{[2][4]}\\
        M_{[3][1]}&M_{[3][2]}&M_{[3][3]}&M_{[3][4]}\\
        M_{[4][1]}&M_{[4][2]}&M_{[4][3]}&M_{[4][4]}
      \end{bmatrix},
    \end{align*}
    where $M_{[i][j]}\in\mathbb{R}^{1\times 2}$ for each $i,j\in \cV$. Also, we observe that $M_{[1][3]}$, $M_{[1][4]}$, $M_{[2][4]}$, $M_{[3][1]}$, $M_{[4][1]}$, $M_{[4][2]}=0$; thus, the matrix bandwidth is $1$.
  \end{example}
}

Definition \ref{def:bandwidth} enables defining {\it {graph-induced banded} matrix}, a graph-structured matrix whose bandwidth is much smaller than the diameter of the structure. This corresponds to the standard notion of banded matrix, {which is defined as a matrix with a bandwidth much smaller than the size of the matrix}.

We now state the basic properties of the matrix bandwidth.

\begin{lemma}\label{lem:bandwidth}
  Consider $X\in\mathbb{R}^{m\times n}$ with bandwidth not greater than $B_X$ induced by $(\cG,\cI,\cJ)$; we have that:
  (a) $X^\top$ has bandwidth not greater than $B_X$ induced by $(\cG,\cJ,\cI)$;
  (b) if $Y\in\mathbb{R}^{m\times n}$ has bandwidth not greater than $B_Y$ induced by $(\cG,\cI,\cJ)$, then $X+Y$ has bandwidth not greater than $\max(B_X,B_Y)$ induced by $(\cG,\cI,\cJ)$;
  (c) if $W\in\mathbb{R}^{n\times k}$ has bandwidth not greater than $B_W$ induced by $(\cG,\cJ,\cK)$, then $XW$ has bandwidth not greater than $B_X+B_W$ induced by $(\cG,\cI,\cK)$.
\end{lemma}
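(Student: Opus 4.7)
The plan is to verify each of the three claims directly from Definition \ref{def:bandwidth} and Proposition \ref{prop:metric}. In each case, the task reduces to showing that a certain block of the constructed matrix vanishes whenever the graph distance between its indexing nodes is sufficiently large.

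For part (a), I would observe that the $[i][j]$-block of $X^\top$ relative to $(\cG,\cJ,\cI)$ is the transpose of the $[j][i]$-block of $X$ relative to $(\cG,\cI,\cJ)$, i.e.\ $(X^\top)[J_i,I_j] = (X[I_j,J_i])^\top$. Then, using symmetry of $d_\cG$ (part (c) of Proposition \ref{prop:metric}), the condition $d_\cG(i,j) > B_X$ is equivalent to $d_\cG(j,i) > B_X$, which forces the relevant block of $X$ to vanish and hence the corresponding block of $X^\top$ as well. For part (b), I would just use linearity of block extraction: $(X+Y)_{[i][j]} = X_{[i][j]} + Y_{[i][j]}$, and if $d_\cG(i,j) > \max(B_X,B_Y)$ then both terms on the right-hand side are zero by assumption.

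Part (c), the product, is the main step and is where the triangle inequality (part (d) of Proposition \ref{prop:metric}) plays the essential role. I would start by expanding the block of the product using the partition $\cJ$:
\begin{equation*}
(XW)_{[i][k]} \;=\; \sum_{j\in \cV} X_{[i][j]}\, W_{[j][k]}.
\end{equation*}
Fix any pair $i,k \in \cV$ with $d_\cG(i,k) > B_X + B_W$. For every $j \in \cV$, the triangle inequality gives $d_\cG(i,k) \leq d_\cG(i,j) + d_\cG(j,k)$, so we cannot have both $d_\cG(i,j) \leq B_X$ and $d_\cG(j,k) \leq B_W$. Therefore, at least one of $X_{[i][j]}$ or $W_{[j][k]}$ vanishes by the hypothesized bandwidths, so every summand is zero and $(XW)_{[i][k]} = \bzero$. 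This yields the bound $B_X+B_W$ on the bandwidth of $XW$.

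The only mild subtlety (hardly an obstacle) is to confirm that the block-product identity above is consistent with the partitioning conventions adopted in the paper, especially in the presence of possibly empty index sets $I_i$, $J_j$, or $K_k$; in such cases the corresponding blocks are zero-dimensional and contribute trivially, so the argument is unaffected. No other steps require more than bookkeeping.
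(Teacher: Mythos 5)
Your proposal is correct and follows essentially the same route as the paper's proof: transposition of blocks plus symmetry of $d_\cG$ for (a), linearity of block extraction for (b), and the block-multiplication expansion combined with the triangle inequality of Proposition \ref{prop:metric}(d) for (c). The remark about empty index sets contributing trivially is a fine (and harmless) addition that the paper leaves implicit.
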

\begin{proof}[Proof of (a)] We have that $(X^\top)_{[i][j]}=(X^\top){[J_i,I_j]}=(X[I_j,J_i])^\top=(X_{[j][i]})^\top$. From the assumption that $X$ has bandwidth not greater than $B_X$ and Proposition \ref{prop:metric}(c), $(X^\top)_{[i][j]}=\bzero$ if $d_\cG(i,j)>B_X$; therefore, $X$ has bandwidth not greater than $B_X$, and induced by $(\cG,\cJ,\cI)$.
\end{proof}
\begin{proof}[Proof of (b)] We have that $X_{[i][j]}=\bzero$ and $Y_{[i][j]}=\bzero$ if $d_\cG(i,j)>\max(B_X,B_Y)$, which yields $(X+Y)_{[i][j]}=\bzero$ if $d_\cG(i,j)>\max(B_X,B_Y)$. Thus, $X+Y$ has bandwidth not greater than $\max(B_X,B_Y)$, and induced by $(\cG,\cI,\cJ)$.
\end{proof}
\begin{proof}[Proof of (c)] If $d_\cG(i,j)>B_X+B_W$, from Proposition \ref{prop:metric}(d) we have that for any $k\in \cV$, $d_\cG(i,k)>B_X$ or $d_\cG(j,k)>B_W$. Thus, if $d_\cG(i,j)>B_X+B_W$,  we have $(XW)_{[i][j]}=\sum_{k\in \cV} X_{[i][k]}W_{[k][j]}= \bzero$ (where the first equality comes from the block matrix multiplication law and the second equality comes from observing that $d_\cG(i,k)>B_X$ or $d_\cG(j,k)>B_W$). Therefore, $XW$ has bandwidth not greater than $B_X+B_W$, and induced by $(\cG,\cI,\cK)$.  
\end{proof}

Lemma \ref{lem:bandwidth} implies that {graph-induced bandwidth} is preserved under transposition, addition, and multiplication, as long as the associated index sets are compatible. Using Lemma \ref{lem:bandwidth}, we can establish the main result of this section, the properties of the inverse of a {graph-induced banded} matrix. 

\begin{theorem}\label{thm:inv}
  Consider a nonsingular matrix $X\in\mathbb{R}^{n\times n}$ with bandwidth not greater than $B_X\geq 1$ induced by $(\cG,\cK,\cP)$, $Y\in\mathbb{R}^{n\times m}$ with bandwidth not greater than $B_Y$ induced by $(\cG, \cK,\cJ)$, and $W\in\mathbb{R}^{\ell\times n}$ with bandwidth not greater than $B_W$ induced by $(\cG, \cI,\cP)$; for constants $\osigma_X\geq \osigma(X)$, $\osigma_Y\geq \osigma(Y)$, $\osigma_W\geq \osigma(W)$, and $0<\usigma_X\leq \usigma(X)$, the following holds:
  \begin{align}\label{eqn:inv}
    \Vert (WX^{-1}Y)_{[i][j]}\Vert \leq
    \frac{\osigma_X\osigma_Y\osigma_W}{\usigma_X^2}\left(\frac{\osigma_X^2-\usigma_X^2}{\osigma_X^2+\usigma_X^2}\right)^{\left\lceil \frac{d_\cG(i,j)-B_X-B_Y-B_W}{2B_X}\right\rceil_+} ,\quad i,j\in \cV,
  \end{align}
  where $(WX^{-1}Y)_{[i][j]}:=(WX^{-1}Y)[I_i,J_j]$ and $\lceil\cdot\rceil_+$ is the smallest non-negative integer that is greater than or equal to the argument.
\end{theorem}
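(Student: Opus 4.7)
The plan is to approximate $X^{-1}$ by a polynomial in $XX^\top$ whose bandwidth grows linearly with the polynomial degree, and then trade approximation accuracy against bandwidth to extract exponential decay in $d_\cG(i,j)$. First I would write $X^{-1}=X^\top(XX^\top)^{-1}$, noting that $A:=XX^\top$ is symmetric positive definite with spectrum contained in $[\usigma_X^2,\osigma_X^2]$. Then I would introduce the degree-$m$ Neumann (Richardson) polynomial $p_m(t):=\alpha\sum_{j=0}^{m}(1-\alpha t)^j$ with $\alpha:=2/(\osigma_X^2+\usigma_X^2)$. A direct geometric-series computation gives $1/t-p_m(t)=(1-\alpha t)^{m+1}/t$, and since $|1-\alpha t|\leq r:=(\osigma_X^2-\usigma_X^2)/(\osigma_X^2+\usigma_X^2)$ uniformly on $[\usigma_X^2,\osigma_X^2]$, the spectral theorem applied to $A$ yields $\|A^{-1}-p_m(A)\|\leq r^{m+1}/\usigma_X^2$.

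Next I would apply Lemma \ref{lem:bandwidth} repeatedly to track bandwidth. Taking transposes (part (a)) gives $X^\top$ bandwidth $\leq B_X$ under $(\cG,\cP,\cK)$, so by part (c) the product $A=XX^\top$ has bandwidth $\leq 2B_X$ under $(\cG,\cK,\cK)$. Induction on $j$ together with part (c) yields $A^j$ of bandwidth $\leq 2jB_X$; since the identity block-diagonal matrix has bandwidth $0$ and part (b) preserves the maximum under sums, $p_m(A)$ has bandwidth $\leq 2mB_X$. Multiplying by $X^\top$, then $W$ on the left and $Y$ on the right via part (c) shows that $Q_m:=WX^\top p_m(A)Y$ has bandwidth $\leq B_W+(2m+1)B_X+B_Y$ under $(\cG,\cI,\cJ)$. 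Consequently, whenever $d_\cG(i,j)>B_W+(2m+1)B_X+B_Y$, Definition \ref{def:bandwidth} forces $(Q_m)_{[i][j]}=\bzero$, so
\[\|(WX^{-1}Y)_{[i][j]}\|=\|(W(X^{-1}-X^\top p_m(A))Y)_{[i][j]}\|\leq \osigma_W\osigma_X\osigma_Y\,r^{m+1}/\usigma_X^2,\]
using submultiplicativity of the spectral norm, $\|X^\top\|\leq\osigma_X$, and the spectral error bound.

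Finally I would optimize the choice of $m$. Setting $d:=d_\cG(i,j)$, the largest admissible integer $m\geq 0$ satisfies $m<(d-B_X-B_Y-B_W)/(2B_X)$; an elementary case split on whether this quotient is an integer shows $m+1=\lceil(d-B_X-B_Y-B_W)/(2B_X)\rceil$, matching the exponent in \eqref{eqn:inv}. When $d\leq B_X+B_Y+B_W$, no admissible $m$ exists, but the ceiling in \eqref{eqn:inv} equals $0$ and the right-hand side collapses to $\osigma_X\osigma_W\osigma_Y/\usigma_X^2$, which dominates the trivial estimate $\|(WX^{-1}Y)_{[i][j]}\|\leq\osigma_W\osigma_Y/\usigma_X$ because $\osigma_X\geq\usigma_X$. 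The main obstacle I anticipate is the bandwidth bookkeeping: I must carefully track that the families $(\cG,\cK,\cK)$, $(\cG,\cP,\cK)$, and $(\cG,\cI,\cJ)$ remain compatible across each transpose, sum, and product in Lemma \ref{lem:bandwidth}, and that the ceiling expression lines up exactly in both the integer and non-integer regimes; the spectral-norm estimate itself is routine once the Neumann polynomial is in hand.
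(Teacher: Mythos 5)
Your proof is correct and is essentially the paper's own argument: both expand $(XX^\top)^{-1}$ as a Neumann series with the Richardson parameter $\alpha=2/(\osigma_X^2+\usigma_X^2)$, use Lemma \ref{lem:bandwidth} to show the degree-$q$ term has bandwidth $(2q+1)B_X+B_Y+B_W$, and conclude that only the exponentially small high-order part contributes to the $[i][j]$-block. Your truncation-error framing (bounding $\Vert A^{-1}-p_m(A)\Vert$ by the spectral theorem) is just the mirror image of the paper's tail-of-the-series estimate and yields the identical constant and exponent.
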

\begin{proof}
  By definition of singular values and the assumptions on $\osigma_X$, $\osigma_Y$ , $\osigma_W$, and $\usigma_X$, we have $\usigma_X^2 \bI \preceq\usigma(X)^2 \bI \preceq X X^\top\preceq  \osigma(X)^2\bI\preceq\osigma_X^2 \bI $. From this, one can obtain:
 \begin{align}\label{eqn:factor}
   \frac{\usigma_X^2-\osigma_X^2}{\usigma_X^2+\osigma_X^2} \bI\preceq \bI-\frac{2}{\usigma_X^2+\osigma_X^2}XX^\top
   \preceq \frac{-\usigma_X^2+\osigma_X^2}{\usigma_X^2+\osigma_X^2}\bI.
 \end{align}
 By Lemma \ref{lem:bandwidth}(c), $XX^\top$ has bandwidth not greater than $2B_X$ induced by $(\cG,\cK,\cK)$. From the nonsingularity of $XX^\top$, we obtain the following:
 \begin{subequations}\label{eqn:key-1}
   \begin{align}
     WX^{-1}Y &= \frac{2}{\usigma_X^2+\osigma_X^2} WX^\top \left(\frac{2}{\usigma_X^2+\osigma_X^2}XX^\top \right)^{-1}Y\\
     &= \frac{2}{\usigma_X^2+\osigma_X^2}WX^\top \left(\sum_{q=0}^\infty \left(\bI-\frac{2}{\usigma_X^2+\osigma_X^2}XX^\top \right)^q\right)Y,\label{eqn:key-1-c}\\
     &=\frac{2}{\usigma_X^2+\osigma_X^2} \sum_{q=0}^\infty WX^\top\left(\bI-\frac{2}{\usigma_X^2+\osigma_X^2}XX^\top \right)^q Y.
 \end{align}
 \end{subequations}
 Here the second equality follows from \cite[Corollary 5.6.16]{horn2012matrix} and \eqref{eqn:factor}, and the last equality follows from the fact that the series in \eqref{eqn:key-1-c} converges (due to \eqref{eqn:factor}). Furthermore, from Lemma \ref{lem:bandwidth}, we see that $WX^\top\left(\bI-\frac{2}{\usigma_X^2+\osigma_X^2}XX^\top \right)^q Y$ has bandwidth not greater than $(2q+1)B_X +B_Y+B_W$, induced by $(\cG,\cI,\cJ)$. By extracting submatrices $(\cdot)[I_i,J_j]$ from \eqref{eqn:key-1}, we obtain:
  \begin{align*}
    (WX^{-1}Y)_{[i][j]}&=\frac{2}{\usigma_X^2+\osigma_X^2} \sum_{q=q_0(i,j)}^\infty \left(WX^\top\left(\bI-\frac{2XX^\top}{\usigma_X^2+\osigma_X^2} \right)^q Y\right)_{[i][j]}
  \end{align*}
  where $q_0(i,j):=\left\lceil \frac{d_\cG(i,j)-B_X-B_Y-B_W}{2B_X}\right\rceil_+$; the summation over $q=0,\cdots, q_0(i,j)-1$ is zero because such $q$ satisfy $(2q+1)B_X+B_Y+B_W< d_\cG(i,j)$. Using the triangle inequality and the fact that the matrix norm of a submatrix is smaller than that of the original matrix,
  \begin{align}\label{eqn:Hbound}
    \Vert (WX^{-1}Y)_{[i][j]} \Vert &\leq \frac{2}{\usigma_X^2+\osigma_X^2} \sum_{q=q_0(i,j)}^\infty \left\Vert WX^\top\left(\bI-\frac{2XX^\top}{\usigma_X^2+\osigma_X^2}\right)^q Y\right\Vert\\
    &\leq \frac{2}{\usigma_X^2+\osigma_X^2} \sum_{q=q_0(i,j)}^\infty \osigma_W\osigma_X\left(\frac{\osigma_X^2-\usigma_X^2}{\osigma_X^2+\usigma_X^2}\right)^q \osigma_Y\nonumber\\
    &\leq \frac{\osigma_X\osigma_Y\osigma_W}{\usigma_X^2}\left(\frac{\osigma_X^2-\usigma_X^2}{\osigma_X^2+\usigma_X^2}\right)^{\left\lceil \frac{d_\cG(i,j)-B_X-B_Y-B_W}{2B_X}\right\rceil_+} .\nonumber
  \end{align}
  The second inequality follows from the submultiplicativity of the matrix norm and \eqref{eqn:factor}; and the last inequality follows from the summation of geometric series. 
\end{proof}

The result indicates that the norm of the $[i][j]$-block of $WX^{-1}Y$ {\it decays exponentially} with respect to the distance between nodes $i$ and $j$ on $\cG$; the decay rate becomes faster (smaller) as the condition number $\osigma(X)/\usigma(X)$ of $X$ decreases; and the decay rate becomes faster as the bandwidths $B_X$, $B_Y$, and $B_W$ decrease. This property will be key in establishing EDS for the gsNLPs \eqref{eqn:prob} and hints at the fact that EDS arises from the connectivity of the graph (at the linear algebra level).

Theorem \ref{thm:inv} is a generalization of \cite[Theorem 1]{shin2020decentralized}, which assumes positive definiteness of $X$ and $Y=W=\bI$. We also note that \cite{demko1984decay} has studied exponential decay of the components of the inverse of graph-induced banded matrices. Specifically, in \cite[Theorem 2.4]{demko1984decay}, exponential decay for indefinite graph-induced banded matrices (with the standard definition of bandwidth) is established. Furthermore, \cite[Proposition 5.1]{demko1984decay} presents a less general form of Theorem \ref{thm:inv}, which lacks the notion of {graph-structured} matrix and assumes positive definiteness. Theorem \ref{thm:inv} generalizes these results by introducing the notion of {graph-structured} matrices and by considering non-symmetric, indefinite matrices.

\section{Exponential Decay of Sensitivity}\label{sec:sens}
This section aims to provide an answer to question \eqref{question}. {To be specific, we show that the nodal sensitivity decays exponentially with respect to the distance to the perturbation.} The sketch of our analysis is as follows: We first invoke classical results of NLP sensitivity theory \cite{robinson1980strongly,dontchev2009implicit} to obtain an expression for the one-sided directional derivative of the primal-dual solution mapping with respect to the data; this representation involves the inverse of a {graph-induced banded} matrix. {Theorem \ref{thm:inv} is then applied to establish bounds on the nodal one-sided directional derivatives. Using the continuity of singular values, one can extend this upper bound result to a neighborhood of the given base solution and data.} Finally, the one-sided directional derivative is integrated over the line segment between a pair of data points that are within the neighborhood of the base data to obtain the result in the form of \eqref{eqn:sens}. This yields explicit expressions for $(\Upsilon,\rho)$. {To facilitate the application of the result to a wide range of problem classes, it is necessary to express the constants $(\Upsilon,\rho)$ with problem-independent parameters. Further characterization of $(\Upsilon,\rho)$ is separately presented in the subsequent section.}
 
\subsection{Preliminaries}\label{sec:prelim}
We use $\bz^\star \in\mathbb{R}^{\bn}$ to denote the primal-dual solution of $P(\bp^\star)$. We refer to $\bp^\star$ as the base data and $\bz^\star$ as the base solution. We denote the primal and dual components of $\bz^\star=\{z^\star_i\}_{i\in\cV}=\{[x^\star_i;y^\star_i]\}_{i\in\cV}$ as $\bx^\star:=\{x^\star_i\}_{i\in\cV}$ and $\by^\star=\{y^\star_i\}_{i\in\cV}$, respectively. We say that $\bz^\star$ is a primal-dual solution if $\bx^\star$ satisfies the first-order optimality conditions with Lagrange multiplier $\by^\star$ (see \cite{nocedal2006numerical}). We now make key assumptions that are necessary to establish our main results. 

\begin{assumption}[Twice Continuous Differentiability of Functions]\label{ass:c2}
  The functions $\bff:\mathbb{R}^{\br}\times \mathbb{R}^{\bl}\rightarrow \mathbb{R}$ and $\bc:\mathbb{R}^{\br}\times \mathbb{R}^{\bl}\rightarrow \mathbb{R}^{\bm}$ are twice continuously differentiable in the neighborhood of $[\bx^\star;\bp^\star]$.
\end{assumption}

\begin{assumption}[Regularity of Solution]\label{ass:regularity}
  The base solution $\bz^\star$ satisfies SSOSC and LICQ for $P(\bp^\star)$ with base data $\bp^\star$.
\end{assumption}
We recall that SSOSC requires positive definiteness of the reduced Hessian of the Lagrangian at $\bz^\star$. The reduced Hessian is the Hessian projected on the null space defined by equality constraints and active inequality constraints with nonzero duals. LICQ requires that the constraint Jacobian defined by equality and active inequality constraints are linearly independent at $\bz^\star$. These requirements are stated as: 
\begin{align*}
  ReH(\nabla^2_{\bx\bx}\cL(\bz^\star;\bp^\star),\nabla_{\bx}\bc(\bx^\star;\bp^\star)[\cA^0(\bp^\star),:]) &\succ \bzero\tag{SSOSC}\\
  \usigma(\nabla_{\bx}\bc(\bx^\star;\bp^\star)[\cA^1(\bp^\star),:]) &> 0 \tag{LICQ}.
\end{align*}
Here, $\cL(\bz;\bp) := \bff(\bx;\bp)-\by^\top \bc(\bx;\bp)$ is the Lagrange function, $ReH(H,A):=Z^\top HZ$ is the reduced Hessian, where $Z$ is a null-space matrix of $A$, and
\begin{align}\label{eqn:active-set-defs}
  \cA^0(\bp^\star)&:=\cA^E\cup\{i\in\cA^I:\bc(\bx^\star)[i]=0,\;\by^\star[i]\neq 0\}\\
  \cA^1(\bp^\star)&:=\cA^E\cup\{i\in\cA^I:\bc(\bx^\star)[i]=0\},
\end{align}
where $\cA^E$ and $\cA^I$ are the set of equality and inequality constraint indices. Hereafter, we call a solution of gsNLP regular if the solution satisfies SSOSC and LICQ. Note that SSOSC and LICQ are standard assumptions used in NLP sensitivity theory \cite{robinson1980strongly}. We now state the classical sensitivity result.

\begin{lemma}\label{lem:nlp-sensitivity}
  Under Assumptions \ref{ass:c2}, \ref{ass:regularity}, there exist neighborhoods $\mathbb{P}\subseteq\mathbb{R}^{\bl}$ of $\bp^\star$ and $\bbZ\subseteq\mathbb{R}^{\bn}$ of $\bz^\star$ and a continuous function $\bz^\dag:\mathbb{P}\rightarrow \bbZ$ such that $\bz^\dag(\bp)$ is a primal-dual solution of $P(\bp)$ that satisfies SSOSC and LICQ. Furthermore, for any $\bq:=\{q_i\}_{i\in \cV}\in\mathbb{R}^{\bl}$, the one-sided directional derivative of $\bz^\dag(\cdot)$ is given by:
  \begin{align*}
    D_{\bq} \bz^\dag(\bp)&:=\lim_{h\searrow 0}\frac{\bz^\dag(\bp+\bq h)-\bz^\dag(\bp)}{h}; \label{eqn:one-sided-DD}
  \end{align*}
  with $D_{\bq}\bx^\dag(\bp) = \bxi^\dag(\bp,\bq)$, $D_{\bq}\by^\dag(\bp,\bq) = \bbeta^\dag(\bp,\bq)$, where $(\bxi^\dag(\bp,\bq),\bbeta^\dag(\bp,\bq)[\cA^1(\bp)])$ is a primal-dual solution of the quadratic program:
  \begin{subequations}\label{eqn:nlp-sensitivity-0}
    \begin{align}
      QP(\bp,\bq):\quad\min_{\bxi}\;&\frac{1}{2}\bxi^\top\bQ(\bp)\bxi + \bxi^\top\bS(\bp)\bq,\label{eqn:nlp-sensitivity-0-obj}\\
      \st\;&\bA(\bp)[i,:]^\top\bxi + \bB(\bp)[i,:]\bq = 0,\;i\in \cA^0(\bp),\; (\bbeta[i])\\
                   &\bA(\bp)[i,:]^\top\bxi + \bB(\bp)[i,:]\bq \geq 0,\;i\in \cA^1(\bp)\setminus \cA^0(\bp),\; (\bbeta[i])
    \end{align}      
  \end{subequations}
  and $\bbeta^\dag(\bp,\bq)[\mathbb{I}_{[1,\bm]}\setminus\cA^1(\bp)]=\bzero$ (i.e., the free dual variables are fixed to zero); here,
  \begin{subequations}\label{eqn:AAQA}
    \begin{align}
      \cA^0(\bp)&:=\cA^E\cup\{i\in\cA^I:\bc(\bx^\dag(\bp))[i]=0,\;\by^\dag(\bp)[i]\neq 0\}\\
      \cA^1(\bp)&:=\cA^E\cup\{i\in\cA^I:\bc(\bx^\dag(\bp))[i]=0\}\\
      \bQ(\bp)&:=\nabla_{\bx\bx}\cL(\bz^\dag(\bp);\bp);\quad \bS(\bp):=\nabla_{\bx\bp}\cL(\bz^\dag(\bp);\bp)\\
      \bA(\bp)&:=\nabla_{\bx}\bc(\bx^\dag(\bp);\bp);\quad \bB(\bp):=\nabla_{\bp}\bc(\bx^\dag(\bp);\bp).
    \end{align}
  \end{subequations}
  Moreover, a unique primal-dual solution of $QP(\bp,\bq)$ exists for any $\bp\in\mathbb{P}$ and $\bq\in\mathbb{R}^{\bl}$; thus, $\bxi^\dag(\cdot,\cdot)$ and $\bbeta^\dag(\cdot,\cdot)$ are well-defined.
\end{lemma}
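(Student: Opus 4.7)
The plan is to reduce everything to classical nonlinear programming sensitivity results, which the paper already cites, and then argue well-posedness of $QP(\bp,\bq)$ from SSOSC and LICQ. First I would view the KKT conditions of $P(\bp)$ as a generalized equation in $\bz = [\bx;\by]$ parametrized by $\bp$. Under Assumption \ref{ass:c2}, the defining maps are $C^1$ near $[\bz^\star;\bp^\star]$, and under Assumption \ref{ass:regularity} the pair (SSOSC, LICQ) at $\bz^\star$ is precisely Robinson's strong regularity condition. I would therefore invoke Robinson's implicit function theorem for generalized equations (as formulated, e.g., in \cite{robinson1980strongly,dontchev2009implicit}) to obtain neighborhoods $\mathbb{P}$ of $\bp^\star$ and $\bbZ$ of $\bz^\star$ and a single-valued, Lipschitz continuous selection $\bz^\dag:\mathbb{P}\rightarrow\bbZ$ of the KKT map. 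Continuity of $\bz^\dag$ together with the $C^1$ regularity of the data functions then yields continuity of $\nabla_{\bx\bx}\cL$, $\nabla_{\bx}\bc$, and of the dual components $\by^\dag$; since SSOSC (a strict eigenvalue bound on the reduced Hessian) and LICQ (a strict lower bound on the smallest singular value of the active Jacobian) are open conditions, after shrinking $\mathbb{P}$ if necessary both persist at every $\bz^\dag(\bp)$ for $\bp\in\mathbb{P}$.

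Next I would derive the expression for $D_{\bq}\bz^\dag(\bp)$. The strategy is to differentiate the perturbed KKT conditions along the ray $\bp+h\bq$ for $h\searrow 0$: stationarity $\nabla_{\bx}\cL(\bz^\dag(\bp+h\bq);\bp+h\bq)=\bzero$, equality and active inequality constraints, and complementarity. A first-order expansion with the shorthand of \eqref{eqn:AAQA} produces exactly the stationarity and linearized constraint system of $QP(\bp,\bq)$; the split between equalities at indices in $\cA^0(\bp)$ and inequalities at $\cA^1(\bp)\setminus\cA^0(\bp)$ comes from strict complementarity-type reasoning applied index by index, and the free multipliers must vanish. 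This recovers the standard characterization of $D_{\bq}\bz^\dag$ that can be found, e.g., in \cite{robinson1980strongly,bonnans2013perturbation,fiacco1976sensitivity}, so I would cite these rather than redo the algebra.

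Finally, the well-posedness claim for $QP(\bp,\bq)$ needs to be verified. LICQ at $\bz^\dag(\bp)$ implies that the rows of $\bA(\bp)$ indexed by $\cA^1(\bp)$ are linearly independent, so the feasible set of $QP(\bp,\bq)$ is nonempty for every $\bq$ and the multipliers $\bbeta^\dag(\bp,\bq)[\cA^1(\bp)]$, when they exist, are unique. SSOSC asserts that $\bQ(\bp)$ is positive definite on the null space of $\bA(\bp)[\cA^0(\bp),:]$, which is precisely the linear space of directions feasible for the equality constraints of $QP(\bp,\bq)$; this makes the objective strictly convex along feasible directions and hence $QP(\bp,\bq)$ admits a unique primal minimizer $\bxi^\dag(\bp,\bq)$, whose associated multipliers are in turn unique by LICQ.

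The main obstacle I anticipate is not the existence/continuity of $\bz^\dag$, which is a direct invocation of strong regularity, but the bookkeeping around active sets in passing to $QP(\bp,\bq)$: one must justify that $\cA^0(\bp)$ and $\cA^1(\bp)$ appearing in \eqref{eqn:AAQA} coincide with the analogous sets in Lemma \ref{lem:nlp-sensitivity} on a whole neighborhood $\mathbb{P}$. This requires controlling the active set under small perturbations, and it is here that strict complementarity encoded implicitly via the definition of $\cA^0$ and LICQ do the work; I would handle it by a continuity argument on $\by^\dag(\bp)$ and $\bc(\bx^\dag(\bp))$ after a further shrinking of $\mathbb{P}$.
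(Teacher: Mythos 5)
Your proposal is correct and follows essentially the same route as the paper: both reduce the lemma to the classical strong-regularity/generalized-equation framework (the paper invokes Theorems 2G.8, 2G.9, and 2D.1 of Dontchev and Rockafellar where you invoke Robinson), obtain $\bz^\dag$ and the persistence of SSOSC/LICQ from that machinery, and identify the linearized generalized equation with the KKT system of $QP(\bp,\bq)$. Your direct convexity argument for existence and uniqueness of the $QP$ solution and your active-set continuity argument are exactly the standard details that the paper's citations supply, so there is nothing substantive to correct.
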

{Note that Lemma \ref{lem:nlp-sensitivity} is a well-known result in the NLP sensitivity literature. We provide a short proof below based on the results reported in \cite{dontchev2009implicit}.}
\begin{proof}
  The results in \cite[Theorem 2G.8]{dontchev2009implicit} ensure semidifferentiability (which guarantees continuity) of the solution of the first-order optimality conditions for $P(\bp)$ over a certain neighborhood of $\bp^\star$. Furthermore, \cite[Theorem 2G.9]{dontchev2009implicit} establishes that over a certain neighborhood $\mathbb{P}$, the solution mapping of the generalized equation (GE) satisfies SSOSC and LICQ; that is, within $\mathbb{P}$, the solution mapping for the GE is the solution mapping for $P(\bp)$ at which SSOSC and LICQ are satisfied. Moreover, by \cite[Theorem 2G.8]{dontchev2009implicit}, the one-sided directional derivative of the solution mapping for the GE (which exists for any direction $\bq\in\mathbb{R}^{\bl}$ by semidifferentiability \cite[Theorem 2D.1]{dontchev2009implicit}) can be evaluated by using the linearized GE. The linear GE corresponds to the first-order optimality conditions for $QP(\bp,\bq)$ (see \cite[Equation (35)]{dontchev2009implicit}); here, the first-order conditions are necessary and sufficient conditions for the optimality due to the convexity $QP(\bp,\bq)$ (guaranteed by SSOSC and LICQ of $\bz^\star$ for the original problem). As such, the one-sided directional derivative of the solution mapping for $P(\bp)$ can be evaluated by solving $QP(\bp,\bq)$. Finally, the strong regularity of the GE at $\bz^\star$ guarantees that there exists a unique solution of the linearized GE, which in turn guarantees the existence of a unique solution of $QP(\bp,\bq)$.  
\end{proof}

Under Lemma \ref{lem:nlp-sensitivity}, the rate of change $D_{\bq}\bz^\dag(\bp)$ of the primal-dual solution of $P(\bp)$ (for a given direction $\bq$) can be quantified by using the solution of $QP(\bp,\bq)$. For given $\bp$ and $\bq$, the parameters in $QP(\bp,\bq)$ can be evaluated explicitly and thus its solution can be calculated. As such, Lemma \ref{lem:nlp-sensitivity} provides a computational procedure to evaluate primal-dual solution sensitivity.

We now further analyze $QP(\bp,\bq)$; its first-order conditions are as follows:
\begin{subequations}\label{eqn:kkt}
  \begin{align}
    \bQ(\bp)\bxi + \bS(\bp)\bq + \bA(\bp)[\cA^{1}(\bp),:]^\top \bbeta &= \bzero\label{eqn:kkt-du}\\
    \bA(\bp)[i,:]\bxi + \bB(\bp)[i,:]\bq &= 0,\; i\in\cA^0(\bp)\label{eqn:kkt-pr-1}\\
    \bA(\bp)[i,:]\bxi + \bB(\bp)[i,:]\bq &\geq 0,\;i\in\cA^1(\bp)\setminus\cA^0(\bp)\label{eqn:kkt-pr-2}\\
    \bbeta[i] &\geq 0,\;i\in\cA^1(\bp)\setminus\cA^0(\bp)\\
    \left(\bA(\bp)[i,:]\bxi + \bB(\bp)[i,:]\bq \right)\bbeta[i]&=0,\; i\in\cA^1(\bp)\setminus\cA^0(\bp).\label{eqn:kkt-compl}
  \end{align}
\end{subequations}
Under SSOSC and LICQ for $P(\bp^\star)$ at $\bz^\star$, these conditions are necessary and sufficient for optimality. From the complementarity condition \eqref{eqn:kkt-compl}, one can observe that there exists $\cA^0(\bp)\subseteq\cA'(\bp,\bq)\subseteq \cA^1(\bp)$ such that:
\begin{subequations}\label{eqn:compl-explicit}
\begin{align}
  \bA(\bp)[i,:]\bxi + \bB(\bp)[i,:]\bq &= 0,\; i\in \cA'(\bp,\bq)\\
  \bbeta[i] &= 0,\; i\in \cA^1(\bp)\setminus \cA'(\bp,\bq).
\end{align}
\end{subequations}
are satisfied at $\bxi^\dag(\bp,\bq)$, $\bbeta^\dag(\bp,\bq)$. Thus, from \eqref{eqn:kkt}, \eqref{eqn:compl-explicit}, and $\bbeta[i]=0$ for $i\in\mathbb{I}_{[1,\bm]}\setminus \cA^1(\bp)$ (from Lemma \ref{lem:nlp-sensitivity}), we have:
\begin{align}\label{eqn:linear-form-0}
  \begin{aligned}
  \begin{bmatrix}
    \bQ(\bp)& \bA(\bp)[\cA'(\bp,\bq),:]^\top\\
    \bA(\bp)[\cA'(\bp,\bq),:]
  \end{bmatrix}
  \begin{bmatrix}
    \bxi\\
    \bbeta[\cA'(\bp,\bq)]
  \end{bmatrix} &= -
  \begin{bmatrix}
    \bS(\bp)\\  
    \bB(\bp)[\cA'(\bp,\bq),:]
  \end{bmatrix}\bq\\
  \bbeta [\mathbb{I}_{[1,\bm]}\setminus \cA'(\bp,\bq)] &= \bzero.
\end{aligned}
\end{align}
By rearranging $[\bxi^\dag(\bp,\bq);\bbeta^\dag(\bp,\bq)]$ one can obtain $\bzeta^\dag(\bp)=\{[\xi_i^\dag(\bp,\bq);\eta_i^\dag(\bp,\bq)]\}_{i\in\cV}$, where $\bxi^\dag(\bp,\bq)=\{\xi_i^\dag(\bp,\bq)\}_{i\in\cV}$, $\bbeta^\dag(\bp,\bq)=\{\eta_i^\dag(\bp,\bq)\}_{i\in\cV}$. To perform such rearrangement, we consider a permutation $\phi:\mathbb{I}_{[1,\bn]}\rightarrow\mathbb{I}_{[1,\bn]}$ that achieves $\bzeta[\phi(i)] = [\bxi;\bbeta][i]$. This permutation enables the following definition:
\begin{subequations}\label{eqn:Bs}
\begin{align}
  \cB^0(\bp)&:=\phi(\mathbb{I}_{[1,\br]}\cup(\cA^0(\bp)+\br)),\quad
                      \cB^1(\bp):=\phi(\mathbb{I}_{[1,\br]}\cup(\cA^1(\bp)+\br))\\
  \cB'(\bp,\bq)&:=\phi(\mathbb{I}_{[1,\br]}\cup(\cA'(\bp,\bq)+\br)).
\end{align}
\end{subequations}
Finally, using the notation in \eqref{eqn:Bs}, we obtain the following from \eqref{eqn:linear-form-0}:
\begin{subequations}\label{eqn:linear-form}
\begin{align}
  D_{\bq}\bz^\dag(\bp)[\cB'(\bp,\bq)] &= -\left(\bH(\bp)[\cB'(\bp,\bq),\cB'(\bp,\bq)]\right)^{-1}\bR(\bp)[\cB'(\bp,\bq),:] \bq,\\
  D_{\bq}\bz^\dag(\bp)[\mathbb{I}_{[1,\bn]}\setminus\cB'(\bp,\bq)] &= \bzero,\label{eqn:linear-form-b}
\end{align}
\end{subequations}
where:
\begin{align}\label{eqn:HR}
  \bH(\bp) &:= \nabla_{\bz\bz}\cL(\bz^\dag(\bp);\bp);\quad \bR(\bp) := \nabla_{\bz\bp}\cL(\bz^\dag(\bp);\bp).
\end{align}
Here, the nonsingularity of $\bH(\bp)[\cB'(\bp,\bq),\cB'(\bp,\bq)]$
follows from LICQ, SSOSC, and \cite[Lemma 16.1]{nocedal2006numerical}.

\subsection{Nodal Sensitivity Result}\label{sec:nodal}
{We now establish the main nodal sensitivity result. First, we consider $\bH(\bp)$ and $\bR(\bp)$ as graph-structured matrices.} Specifically, we introduce the index set families $\cI:=\{I_i\}_{i\in\cV}$ and $\cK:=\{K_i\}_{i\in\cV}$with:
\begin{align}\label{eqn:ind-1}
  I_i &:= \mathbb{I}_{\sum_{j\in \cV,j<i}n_j+[1,n_i]},\quad
  K_i := \mathbb{I}_{\sum_{j\in \cV,j<i}l_j+[1,l_i]},
\end{align}
and impose graph structures to $\bH(\bp)$ and $\bR(\bp)$ with $(\cG,\cI,\cI)$ and $(\cG,\cI,\cK)$, respectively. Note that $\cI$ and $\cK$ partition $\mathbb{I}_{[1,\bn]}$ and $\mathbb{I}_{[1,\bl]}$, respectively. For $i,j\in\cV$,
\begin{subequations}\label{eqn:HRij}
\begin{align}
  H_{ij}(\bp)&:=\nabla^2_{z_iz_j}\cL(\bz^\dag(\bp);\bp)=(\bH(\bp))_{[i][j]}\\
  R_{ij}(\bp)&:=\nabla^2_{z_ip_j}\cL(\bz^\dag(\bp);\bp)=(\bR(\bp))_{[i][j]}.
\end{align}
\end{subequations}
{We now can observe that $\bH(\bp)$ and $\bR(\bp)$ have bandwidth not greater than two. In other words, if $d_\mathcal{G}(i,j)> 2$, $\nabla_{z_i z_j}\mathcal{L}(\bz;\bp)=0$ and $\nabla_{z_i p_j}\mathcal{L}(\bz;\bp)=0$. This is because in the graph-structured nonlinear programming setting in (1.1), we have assumed that the nodal objective and constraint functions are only dependent on the variables that are within the closed neighborhood of the associated node.}

The submatrices of $\bH(\bp)$ and $\bR(\bp)$ are also graph-structured. In particular, $\bH(\bp)[\cB,\cB]$ and $\bR(\bp)[\cB,:]$ with $\cB\subseteq \mathbb{I}_{[1,\bn]}$ have bandwidth not greater than two induced by $(\cG,\cI^\cB,\cI^\cB)$ and $(\cG,\cI^\cB,\cK)$, where $\cI^\cB:=\{I_i\cap\cB\}_{i\in\cV}$. We thus see that \eqref{eqn:linear-form} involves the inverse of a {graph-induced banded} matrix. By combining Theorem \ref{thm:inv} and Lemma \ref{lem:nlp-sensitivity}, we obtain the following result.

\begin{lemma}\label{lem:dder}
  Suppose Assumptions \ref{ass:c2}, \ref{ass:regularity} hold, and suppose that, for given $\bp\in\mathbb{P}$ (defined in Lemma \ref{lem:nlp-sensitivity}) and $\bq:=\{q_i\}_{i\in \cV}\in\mathbb{R}^{\bl}$, we have $\osigma_{\bH}(\bp,\bq)$, $\osigma_{\bR}(\bp,\bq)$, and $\usigma_{\bH}(\bp,\bq)$ such that:
  \begin{subequations}\label{eqn:dder-0}
  \begin{align}
    \osigma_{\bH}(\bp,\bq)&\geq \osigma(\bH(\bp)[\cB'(\bp,\bq),\cB'(\bp,\bq)])\\
    \osigma_{\bR}(\bp,\bq)&\geq \osigma(\bR(\bp)[\cB'(\bp,\bq),:])\\
    0<\usigma_{\bH}(\bp,\bq)&\leq \usigma(\bH(\bp)[\cB'(\bp,\bq),\cB'(\bp,\bq)]);\label{eqn:dder-0-c}
  \end{align}
\end{subequations}
then the following holds for any $i\in \cV$:
  \begin{align*}
    \Vert D_{\bq} z^\dag_i(\bp,\bq)\Vert\leq \sum_{j\in \cV}\frac{\osigma_{\bH}(\bp,\bq)\osigma_{\bR}(\bp,\bq)}{\usigma_{\bH}(\bp,\bq)^2}\left(\frac{\osigma_{\bH}(\bp,\bq)^2-\usigma_{\bH}(\bp,\bq)^2}{\osigma_{\bH}(\bp,\bq)^2+\usigma_{\bH}(\bp,\bq)^2}\right)^{\left\lceil \frac{d_\cG(i,j)}{4}-1\right\rceil_+}\Vert q_j\Vert.
  \end{align*}
\end{lemma}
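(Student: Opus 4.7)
The plan is to combine the closed-form representation of the one-sided directional derivative in \eqref{eqn:linear-form} with Theorem \ref{thm:inv}. From \eqref{eqn:linear-form}, $D_{\bq}\bz^\dag(\bp)[\cB'(\bp,\bq)] = -(\bH(\bp)[\cB',\cB'])^{-1}\bR(\bp)[\cB',:]\bq$ while $D_{\bq}\bz^\dag(\bp)[\mathbb{I}_{[1,\bn]}\setminus\cB']=\bzero$. Writing $M := (\bH(\bp)[\cB',\cB'])^{-1}\bR(\bp)[\cB',:]$ and introducing the restricted row partition $\cI^{\cB'} := \{I_i \cap \cB'(\bp,\bq)\}_{i\in\cV}$ (with the column partition $\cK$ unchanged), block multiplication combined with the triangle inequality gives $\|D_{\bq} z^\dag_i(\bp,\bq)\| = \|(M\bq)[I_i \cap \cB']\| \leq \sum_{j\in\cV}\|M_{[i][j]}\|\,\|q_j\|$, where the first equality uses that the components of $D_{\bq}\bz^\dag$ outside $\cB'$ vanish. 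The task therefore reduces to bounding $\|M_{[i][j]}\|$.

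Next I would verify that $\bH(\bp)$ and $\bR(\bp)$ have graph-induced bandwidth at most two under $(\cG,\cI,\cI)$ and $(\cG,\cI,\cK)$, respectively. Because each $f_k$, $c_k^E$, $c_k^I$ depends only on the variables and data indexed by the closed neighborhood $\cN_\cG[k]$, the block $\nabla^2_{z_i z_j}\cL$ can receive a nonzero contribution only from terms with $i,j \in \cN_\cG[k]$ for some $k\in\cV$, and by Proposition \ref{prop:metric}(d) this forces $d_\cG(i,j) \leq 2$; the same argument covers $\nabla^2_{z_i p_j}\cL$. Since restriction to the index set $\cB'$ only zeros out entries of a graph-structured matrix, $\bH(\bp)[\cB',\cB']$ retains bandwidth at most $2$ induced by $(\cG,\cI^{\cB'},\cI^{\cB'})$, and $\bR(\bp)[\cB',:]$ retains bandwidth at most $2$ induced by $(\cG,\cI^{\cB'},\cK)$.

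Finally I would apply Theorem \ref{thm:inv} with $W=\bI$ (so $B_W=0$ and $\osigma_W=1$), $X=\bH(\bp)[\cB',\cB']$ with $B_X=2$, and $Y=\bR(\bp)[\cB',:]$ with $B_Y=2$, feeding in the singular-value bounds supplied by the hypothesis \eqref{eqn:dder-0}. This yields
\begin{equation*}
\|M_{[i][j]}\| \leq \frac{\osigma_{\bH}(\bp,\bq)\,\osigma_{\bR}(\bp,\bq)}{\usigma_{\bH}(\bp,\bq)^2}\left(\frac{\osigma_{\bH}(\bp,\bq)^2-\usigma_{\bH}(\bp,\bq)^2}{\osigma_{\bH}(\bp,\bq)^2+\usigma_{\bH}(\bp,\bq)^2}\right)^{\left\lceil (d_\cG(i,j)-4)/4\right\rceil_+},
\end{equation*}
and the exponent simplifies to $\lceil d_\cG(i,j)/4 - 1\rceil_+$; substituting into the sum displayed above gives the claim. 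The main obstacle is essentially the bookkeeping around restriction to $\cB'(\bp,\bq)$, namely confirming that the bandwidth-two property of $\bH$ and $\bR$ survives the restricted row partition $\cI^{\cB'}$; once this is in place, Theorem \ref{thm:inv} produces the exponential decay directly.
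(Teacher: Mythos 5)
Your proposal is correct and follows essentially the same route as the paper: extract the representation $D_{\bq}\bz^\dag(\bp)[I_i\cap\cB'] = -\sum_j ((\bH(\bp)[\cB',\cB'])^{-1}\bR(\bp)[\cB',:])_{[i][j]}q_j$ from \eqref{eqn:linear-form}, note that the restricted matrices retain bandwidth at most two under $(\cG,\cI^{\cB'},\cI^{\cB'})$ and $(\cG,\cI^{\cB'},\cK)$, and apply Theorem \ref{thm:inv} (with $W=\bI$, $B_X=B_Y=2$, $B_W=0$) together with \eqref{eqn:linear-form-b} and the triangle inequality. The exponent bookkeeping $\lceil (d_\cG(i,j)-4)/4\rceil_+=\lceil d_\cG(i,j)/4-1\rceil_+$ and the constant both check out.
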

\begin{proof}
  {\blue For simplicity, we denote $\cB'(\bp,\bq)$, $\osigma_{\bH}(\bp,\bq)$, $\osigma_{\bR}(\bp,\bq)$, $\usigma_{\bH}(\bp,\bq)$ by $\cB'$, $\osigma_{\bH}$, $\osigma_{\bR}$, $\usigma_{\bH}$}. From that $\bH(\bp)[\cB',\cB']$ is always nonsingular, we have that $\usigma_{\bH}(\bp,\bq)$ satisfying \eqref{eqn:dder-0-c} always exists. By inspecting the block structure of \eqref{eqn:linear-form} we obtain:
  \begin{align}\label{eqn:nlp-sensitivity-1}
    D_{\bq}\bz^\dag(\bp)[I_i\cap\cB'] &= \sum_{j\in \cV} -((\bH(\bp)[\cB',\cB'])^{-1}\bR(\bp)[\cB',:])_{[i][j]} q_j,
  \end{align}
  where $\cI^{\cB'}:=\{I_i\cap{\cB'}\}_{i\in \cV}$ and $\cK:=\{K_i\}_{i\in \cV}$ (defined in \eqref{eqn:ind-1}) are used for index sets. We have already established that $\bH(\bp)[\cB',\cB']$ has bandwidth not greater than two, induced by $(\cG,\cI^{\cB'},\cI^{\cB'})$ and that $\bR(\bp)[\cB',:]$ has bandwidth not greater than two, induced by $(\cG,\cI^{\cB'},\cK)$. By applying Theorem \ref{thm:inv}, we obtain:
  \begin{align}\label{eqn:nlp-sensitivity-2}\blue
    \Vert((\bH(\bp)[\cB',\cB'])^{-1}\bR(\bp)[\cB',:])_{[i][j]}\Vert\leq \frac{\osigma_{\bH}\osigma_{\bR}}{\usigma_{\bH}^2}\left(\frac{\osigma_{\bH}^2-\usigma_{\bH}^2}{\osigma_{\bH}^2+\usigma_{\bH}^2}\right)^{\left\lceil \frac{d_\cG(i,j)}{4}-1\right\rceil_+} .
  \end{align}
  Now note that
  $\Vert D_{\bq}z^\dag_i(\bp)\Vert \leq \Vert D_{\bq}\bz^\dag(\bp)[I_i\cap\cB'] \Vert  + \Vert D_{\bq}\bz^\dag(\bp)[I_i\setminus\cB']\Vert$,
  and recall from \eqref{eqn:linear-form-b} that $D_{\bq}\bz^\dag(\bp)[I_i\setminus\cB'] =\bzero$. Hence, by applying \eqref{eqn:nlp-sensitivity-2} to \eqref{eqn:nlp-sensitivity-1}, and applying triangle inequality, we obtain the desired result.
\end{proof}

Lemma \ref{lem:dder} establishes that the dependence of $\Vert D_{\bq}z_i^\dag(\bp)\Vert$ on  perturbation $p_j$ decays with $d_\cG(i,j)$. However, the right-hand side of \eqref{eqn:nlp-sensitivity-2} still depends on $\bp,\bq$. To express this result as in \eqref{eqn:sens}, wherein the constants on the right-hand side are independent of $\bp,\bq$, we exploit the continuity of singular values \cite[Corollary 8.6.2]{golub2012matrix}. This gives us the main result of the paper. 

\begin{theorem}[Exponential Decay of Sensitivity (EDS)]\label{thm:main}
  Under Assumptions \ref{ass:c2}, \ref{ass:regularity}, and for given $\epsilon>0$, $\osigma_{\bH}\geq\oosigma_{\bH}(\bp^\star)$, $\osigma_{\bR}\geq\oosigma_{\bR}(\bp^\star)$, and $0<\usigma_{\bH}\leq\uusigma_{\bH}(\bp^\star)$, where
  \begin{subequations}\label{eqn:sigmas}
    \begin{align}
      \oosigma_{\bH}(\bp)&:=\max\{\osigma(\bH(\bp)[\cB,\cB]):\cB^0(\bp^\star)\subseteq\cB\subseteq\cB^1(\bp^\star)\}\\
      \oosigma_{\bR}(\bp)&:=\max\{\osigma(\bR(\bp)[\cB,:]):\cB^0(\bp^\star)\subseteq\cB\subseteq\cB^1(\bp^\star)\}\\
      \uusigma_{\bH}(\bp)&:=\min\{\usigma(\bH(\bp)[\cB,\cB]):\cB^0(\bp^\star)\subseteq\cB\subseteq\cB^1(\bp^\star)\},
    \end{align}
  \end{subequations}
  there exists a neighborhood $\mathbb{P}_{\epsilon}$ of $\bp^\star$ such that the following holds for any $\bp,\bp'\in\mathbb{P}_{\epsilon}$:
  \begin{align}\label{eqn:main}
      &\Vert z^\dag_i(\bp)-z^\dag_i(\bp')\Vert 
      \leq \sum_{j\in \cV} \Upsilon\rho^{\left\lceil \frac{d_\cG(i,j)}{4}-1\right\rceil_+} \Vert p_j-p'_j\Vert,\quad i\in \cV,
  \end{align}
with $\Upsilon :=\dfrac{\osigma_{\bH}\osigma_{\bR}}{{\usigma}_{\bH}^2}+\epsilon$ and $\rho:=\dfrac{\osigma_{\bH}^2-{\usigma}_{\bH}^2}{\osigma_{\bH}^2+{\usigma}_{\bH}^2}+\epsilon$.
\end{theorem}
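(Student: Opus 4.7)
The plan is to lift the pointwise bound from Lemma \ref{lem:dder} to a bound with constants independent of $\bp$ and $\bq$ over a convex neighborhood of $\bp^\star$, and then to recover \eqref{eqn:main} by integrating the one-sided directional derivative along the straight line segment from $\bp'$ to $\bp$.

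First I would establish active-set stability around $\bp^\star$. Using continuity of $\bz^\dag(\cdot)$ from Lemma \ref{lem:nlp-sensitivity}, Assumption \ref{ass:c2}, and complementarity, inequality constraints that are strictly inactive at $\bp^\star$ remain strictly inactive nearby, while inequalities with nonzero multiplier at $\bp^\star$ retain nonzero multipliers and stay active; hence $\cA^0(\bp^\star)\subseteq\cA^0(\bp)\subseteq\cA^1(\bp)\subseteq\cA^1(\bp^\star)$ on a neighborhood of $\bp^\star$, which through the permutation in \eqref{eqn:Bs} gives $\cB^0(\bp^\star)\subseteq\cB^0(\bp)\subseteq\cB^1(\bp)\subseteq\cB^1(\bp^\star)$. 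Since $\cB^0(\bp)\subseteq\cB'(\bp,\bq)\subseteq\cB^1(\bp)$ by construction, any $\cB'(\bp,\bq)$ encountered on this neighborhood lies in the finite collection $\{\cB:\cB^0(\bp^\star)\subseteq\cB\subseteq\cB^1(\bp^\star)\}$. By Assumption \ref{ass:c2} and continuity of $\bz^\dag(\bp)$, $\bH(\bp)$ and $\bR(\bp)$ depend continuously on $\bp$; combined with continuity of singular values and the finiteness of the collection of $\cB$'s, the functions $\oosigma_\bH(\cdot)$, $\oosigma_\bR(\cdot)$, $\uusigma_\bH(\cdot)$ are continuous at $\bp^\star$. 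Because $\osigma_\bH\geq\oosigma_\bH(\bp^\star)$, $\osigma_\bR\geq\oosigma_\bR(\bp^\star)$, $0<\usigma_\bH\leq\uusigma_\bH(\bp^\star)$ already place the base-point values strictly below $\Upsilon$ and $\rho$, and since $\osigma\osigma/\usigma^2$ and $(\osigma^2-\usigma^2)/(\osigma^2+\usigma^2)$ are continuous on $\{\usigma>0\}$, I can shrink to a ball $\mathbb{P}_\epsilon$ around $\bp^\star$ with
\begin{equation*}
\frac{\oosigma_\bH(\bp)\,\oosigma_\bR(\bp)}{\uusigma_\bH(\bp)^2}\leq\Upsilon,\qquad
\frac{\oosigma_\bH(\bp)^2-\uusigma_\bH(\bp)^2}{\oosigma_\bH(\bp)^2+\uusigma_\bH(\bp)^2}\leq\rho,\qquad \bp\in\mathbb{P}_\epsilon.
\end{equation*}

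With $\mathbb{P}_\epsilon$ taken as such a ball (hence convex), for every $\bp\in\mathbb{P}_\epsilon$ and every $\bq$ the choices $\osigma_\bH(\bp,\bq)\leq\oosigma_\bH(\bp)$, $\osigma_\bR(\bp,\bq)\leq\oosigma_\bR(\bp)$, $\usigma_\bH(\bp,\bq)\geq\uusigma_\bH(\bp)$ are admissible in Lemma \ref{lem:dder} and yield the uniform bound
\begin{equation*}
\Vert D_{\bq} z_i^\dag(\bp)\Vert\leq\sum_{j\in\cV}\Upsilon\,\rho^{\left\lceil d_\cG(i,j)/4-1\right\rceil_+}\Vert q_j\Vert.
\end{equation*}
To conclude, I would use that $\bz^\dag(\cdot)$ is locally Lipschitz (a consequence of strong regularity underlying Lemma \ref{lem:nlp-sensitivity}) and everywhere semidifferentiable on $\mathbb{P}_\epsilon$ to justify the integral representation
\begin{equation*}
z_i^\dag(\bp)-z_i^\dag(\bp')=\int_0^1 D_{\bp-\bp'}\,z_i^\dag\bigl(\bp'+t(\bp-\bp')\bigr)\,dt,
\end{equation*}
valid because the restriction of $\bz^\dag$ to the line segment is absolutely continuous with a.e.\ derivative equal to $D_{\bp-\bp'}\bz^\dag(\cdot)$. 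Taking norms inside the integral, applying the uniform directional-derivative bound with $\bq=\bp-\bp'$, and pulling the $t$-independent factors $\Vert p_j-p'_j\Vert$ outside the integral delivers \eqref{eqn:main}.

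The main obstacle is the last step: because strict complementarity is not assumed, $\bz^\dag$ is only piecewise smooth across changes of the optimal basis $\cB'(\bp,\bq)$, so classical differentiability may fail on a thin exceptional set along the segment, and the fundamental-theorem-of-calculus identity must be obtained from local Lipschitz continuity together with one-sided differentiability rather than from $C^1$ regularity. A secondary technical point is calibrating the perturbation of $\oosigma_\bH(\bp)$, $\oosigma_\bR(\bp)$, $\uusigma_\bH(\bp)$ away from their base-point values small enough to be absorbed by the $\epsilon$ slack built into $\Upsilon$ and $\rho$; this is a routine continuity check on $\{\uusigma_\bH>0\}$.
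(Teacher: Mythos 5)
Your proposal is correct and follows essentially the same route as the paper's proof: active-set stability near $\bp^\star$ to confine $\cB'(\bp,\bq)$ to the finite collection between $\cB^0(\bp^\star)$ and $\cB^1(\bp^\star)$, continuity of singular values to absorb the deviation into the $\epsilon$ slack on a convex neighborhood, Lemma \ref{lem:dder} for the uniform directional-derivative bound, and integration along the segment. Your closing remark about justifying the Newton--Leibniz identity via local Lipschitz continuity and semidifferentiability (rather than $C^1$ regularity) is a legitimate refinement of a step the paper states without elaboration, but it does not change the argument.
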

\begin{proof}
  From the continuity of $\bz^\dag(\cdot)$ in the neighborhood of $\bp^\star$ and the continuity of $\bc(\cdot,\cdot)$ in the neighborhood of $[\bz^\star;\bp^\star]$, there exists a neighborhood $\widetilde{\mathbb{P}}\subseteq\mathbb{P}$ of $\bp^\star$ such that, for $\bp\in\widetilde{\mathbb{P}}$ and $i\in\mathbb{I}_{[1,\bm]}$, $\bc(\bx^\star)[i] > 0\Rightarrow\bc(\bx^\dag(\bp))[i] > 0$ and $\by^\star[i] \neq 0\Rightarrow\by^\dag(\bp)[i] \neq 0$. These conditions and complementarity slackness imply that for $\bp\in\widetilde{\mathbb{P}}$, we have $\cA^0(\bp^\star)\subseteq\cA^0(\bp)$ and $\cA^1(\bp)\subseteq\cA^1(\bp^\star)$; that is, $\cB^0(\bp^\star)\subseteq\cB^0(\bp)$ and $\cB^1(\bp)\subseteq\cB^1(\bp^\star)$. This and $\cB^0(\bp)\subseteq\cB'(\bp,\bq)\subseteq\cB^1(\bp)$ yields:
  \begin{subequations}\label{eqn:main-0}
  \begin{align}
    \osigma({\bH}(\bp)[\cB'(\bp,\bq),\cB'(\bp,\bq)])&\leq \oosigma_{\bH}(\bp)\\
    \osigma({\bR}(\bp)[\cB'(\bp,\bq),:])&\leq \oosigma_{\bR}(\bp)\\
    \usigma({\bH}(\bp)[\cB'(\bp,\bq),\cB'(\bp,\bq)])&\geq \uusigma_{\bH}(\bp).
  \end{align}
\end{subequations}
  By the twice-continuous differentiability of $\cL(\cdot,\cdot)$ and the continuity of $\bz^\dag(\cdot)$, we have that $\bH(\cdot)$ is continuous. The same holds true for its submatrices: $\bH(\cdot)[\cB,\cB]$ with $\cB^0(\bp^\star)\subseteq \cB\subseteq \cB^1(\bp^\star)$. From the continuity of singular values with respect to its entries \cite[Corollary 8.6.2]{golub2012matrix}, we have that $\osigma(\bH(\cdot)[\cB,\cB])$ and $\usigma(\bH(\cdot)[\cB,\cB])$ are continuous for any $\cB^0(\bp^\star)\subseteq \cB\subseteq \cB^1(\bp^\star)$; accordingly, since a maximum and a minimum of a fixed and finite number of continuous functions is continuous, we have that $\oosigma_{\bH}(\bp)$, $\oosigma_{\bR}(\bp)$, $\uusigma_{\bH}(\bp)$ are continuous with respect to $\bp$ in $\widetilde{\mathbb{P}}$. Thus, there exists a convex neighborhood $\mathbb{P}_{\epsilon}\subseteq\widetilde{\mathbb{P}}$ of $\bp^\star$ wherein the following are satisfied:
  \begin{subequations}\label{eqn:main-1}
  \begin{align}
    \frac{\oosigma_{\bH}(\bp)\oosigma_{\bR}(\bp)}{\uusigma_{\bH}(\bp)^2} &\leq
    \frac{\oosigma_{\bH}(\bp^\star)\oosigma_{\bR}(\bp^\star)}{\uusigma_{\bH}(\bp^\star)^2}+\epsilon\\
    \frac{\oosigma_{\bH}(\bp)^2-\uusigma_{\bH}(\bp)^2}{\oosigma_{\bH}(\bp)^2+\uusigma_{\bH}(\bp)^2}&\leq
    \frac{\oosigma_{\bH}(\bp^\star)^2-\uusigma_{\bH}(\bp^\star)^2}{\oosigma_{\bH}(\bp^\star)^2+\uusigma_{\bH}(\bp^\star)^2}+\epsilon.
  \end{align}
\end{subequations}
Here, note that $\uusigma_{\bH}(\bp^\star)>0$ because $\usigma(\bH[\cB,\cB])>0$ holds for any $\cB^0(\bp^\star)\subseteq\cB\subseteq\cB^1(\bp^\star)$. By applying \eqref{eqn:main-0} and \eqref{eqn:main-1} to Lemma \ref{lem:dder}, we obtain:
\begin{align}\label{eqn:main-2}
  \Vert D_{\bq} z^\dag_i(\bp)\Vert&\leq \sum_{j\in \cV}\left(\frac{\oosigma_{\bH}(\bp^\star)\oosigma_{\bR}(\bp^\star)}{\uusigma_{\bH}(\bp^\star)^2}+\epsilon\right)\left(\frac{\oosigma_{\bH}(\bp^\star)^2-\uusigma_{\bH}(\bp^\star)^2}{\oosigma_{\bH}(\bp^\star)^2+\uusigma_{\bH}(\bp^\star)^2}+\epsilon\right)^{\left\lceil \frac{d_\cG(i,j)}{4}-1\right\rceil_+}\Vert q_j\Vert\nonumber\\
  &\leq \sum_{j\in \cV}\left(\frac{\osigma_{\bH}\osigma_{\bR}}{\usigma_{\bH}^2}+\epsilon\right)\left(\frac{\osigma_{\bH}^2-\usigma_{\bH}^2}{\osigma_{\bH}^2+\usigma_{\bH}^2}+\epsilon\right)^{\left\lceil \frac{d_\cG(i,j)}{4}-1\right\rceil_+}\Vert q_j\Vert\nonumber\\
  &\leq \sum_{j\in \cV}\Upsilon\rho^{\left\lceil \frac{d_\cG(i,j)}{4}-1\right\rceil_+}\Vert q_j\Vert,
\end{align}
for any $\bp\in\mathbb{P}_{\epsilon}$ and $\bq\in\mathbb{R}^{\bl}$. Finally, since we have chosen $\mathbb{P}_\epsilon$ to be convex, for any $\bp,\bp'\in\mathbb{P}_{\epsilon}$, the line segment between $\bp,\bp'$ is within $\mathbb{P}_{\epsilon}$. Thus, we have:
\begin{align*}
  \Vert z^\dag_i(\bp)-z^\dag_i(\bp')\Vert
    &\leq \int_{0}^1\Vert D_{\bp'-\bp}z^\dag_i ((1-t)\bp+t\bp')\Vert dt \leq \sum_{j\in \cV}\Upsilon\rho^{\left\lceil \frac{d_\cG(i,j)}{4}-1\right\rceil_+}\Vert p_j-p'_j\Vert,
\end{align*}
where the first inequality is from Newton-Leibniz and triangle inequality for integrals, and the last inequality follows from \eqref{eqn:main-2}.
\end{proof}

One can observe that $\Upsilon>0$ and $\rho\in(0,1)$ hold; consequently, we have that the upper bound of the nodal sensitivity decays exponentially as $d_\cG(i,j)$ increases. We can also see that $(\Upsilon,\rho)$ depend on the singular values of the submatrices of $\bH(\bp^\star)$, $\bR(\bp^\star)$. {Specifically, it is necessary to have a reasonably good conditioning of the $\bH(\bp^\star)[\cB,\cB]$ matrices satisfying $\cB^0(\bp^\star)\subseteq\cB\subseteq\cB^1(\bp^\star)$ in order to guarantee sufficiently fast decay. In the next section, we discuss under which conditions we can guarantee good conditioning in the $\bH(\bp^\star)[\cB,\cB]$ matrices.}
{\blue Finally, after a suitable redefinition ($\Upsilon\leftarrow \Upsilon\rho^{-1}$ and $\rho\leftarrow \rho^{1/4}$), one can obtain the result claimed in Section \ref{sec:intro}; that is, \eqref{eqn:sens} holds with the sensitivity coefficients $\{C_{ij}=\Upsilon\rho^{ d_\cG(i,j)}\}_{i,j\in \cV}$. }

\begin{remark}\label{rem:gen-2}
  One can establish EDS for a more general version of NLP \eqref{eqn:prob}, in which coupling is allowed within the expanded neighborhood $N^B_\cG[i]:=\{j\in \cV: d_\cG(i,j)\leq B\}$ with $B>1$. Such an NLP arises when the algebraic coupling between nodes extends beyond immediate neighbors. For this more general setting, the corresponding results for Theorem \ref{thm:main} can be established; in particular, if the rest of the assumptions in Theorem \ref{thm:main} remain the same, the following holds:
  \begin{align}
  \Vert z^\dag_i(\bp)-z^\dag_i(\bp')\Vert 
    \leq &\sum_{j\in \cV} \Upsilon\rho^{\left\lceil \frac{d_\cG(i,j)}{4B}-1\right\rceil_+} \Vert p_j-p'_j\Vert.
  \end{align}
  We can observe that the exponential decay becomes slower as $B$ increases. 
\end{remark}

\section{Uniform Regularity Conditions}\label{sec:uniform}
An interesting class of graph-structured nonlinear programs is that in which the underlying graph is a subgraph of an infinite-dimensional graph. Examples include time-dependent problems (in which we might want to extend the horizon) and discretized PDE optimization (in which we might want to expand the domain). To analyze this setting, we consider a family of problems $\{P_{(k)}(\cdot)\}_{k\in K}$ with a potentially infinite problem index set $K$. The associated quantities are introduced accordingly; $\{\cG_{(k)}=(\cV_{(k)},\cE_{(k)})\}_{k\in K}$, $\{\bff_{(k)}(\cdot)\}_{k\in K}$, $\{\bc_{(k)}(\cdot)\}_{k\in K}$. Also, a set of data $\{\bp^\star_{(k)}\}_{k\in K}$ and the associated base solutions $\{\bz^\star_{(k)}\}_{k\in K}$ are considered. The submatrices $\{\bH_{(k)}(\bp^\star)\}_{k\in K}$, $\{\bR_{(k)}(\bp^\star)\}_{k\in K}$ of the full Hessian matrix can be defined as in \eqref{eqn:HR} for each $k$. 

This section aims to establish sufficient conditions for:
\begin{align}\label{eqn:uniform-bound}
  \sup_{k\in K} \oosigma_{\bH,(k)}(\bp^\star_{(k)})&<+\infty;\quad \sup_{k\in K}\oosigma_{\bR,(k)}(\bp^\star_{(k)})< +\infty;\quad \inf_{k\in K}\uusigma_{\bH,(k)}(\bp^\star_{(k)})>0,
\end{align}
where $\oosigma_{\bH,(k)}(\bp^\star_{(k)})$, $\oosigma_{\bR,(k)}(\bp^\star_{(k)})$, and $\uusigma_{\bH,(k)}(\bp^\star_{(k)})$ are defined in Theorem \ref{thm:main}, but the problem index $k$ is added. One can observe that, if \eqref{eqn:uniform-bound} is violated, $\Upsilon_{(k)}$ may become indefinitely large and $\rho_{(k)}$ may approach one (thus making the bounds derived in Theorem \ref{thm:main} not particularly useful). Hence, ensuring \eqref{eqn:uniform-bound} is crucial for guaranteeing a moderately bounded sensitivity magnitude $\Upsilon_{(k)}$ and a fast sensitivity decay rate $\rho_{(k)}$. 

We call \eqref{eqn:uniform-bound} {\it uniform} boundedness conditions; furthermore, we call a quantity to be {\it uniform} in $k$ if the quantity is independent of the problem index $k$. Note that \eqref{eqn:uniform-bound} holds trivially if $K$ is finite and Assumptions \ref{ass:c2}, \ref{ass:regularity} hold for each $k\in K$. However, even if $K$ is finite, it is necessary for Theorem \ref{thm:main} to be practically useful that $\inf_{k\in K}\uusigma_{\bH,(k)}(\bp^\star_{(k)})$ is sufficiently bounded away from zero and that $\sup_{k\in K} \oosigma_{\bH,(k)}(\bp^\star_{(k)})$ and $\sup_{k\in K}\oosigma_{\bR,(k)}(\bp^\star_{(k)})$ are bounded above by a moderately large number. As such, the results in this section provide useful information even if $K$ is finite (and even if $K$ is a singleton). Hereafter, to reduce the notational burden, we will drop the notation for the dependency on $k$ as well as $\bp$ (e.g., $\bH\leftarrow\bH_{(k)}(\bp^\star)$). {This is because (i) even if we assume that there might be multiple problems $P_k(\cdot)$, we do not need to specify the problem index $k$ each time because we study the condition that applies uniformly to each problem; and (ii) $\bp$ is fixed to $\bp^\star$ for the rest of the discussion in this section (note that Theorem \ref{thm:main} only requires upper/lower bounds of the singular values evaluated at a particular pair of base solution and data).}

\subsection{Sufficient Conditions for Uniform Boundedness}\label{sec:uniform-sub}
We now state key {\it uniform regularity assumptions} that are necessary for establishing uniform boundedness conditions \eqref{eqn:uniform-bound}. {The uniform regularity conditions consist of uniformly bounded Lagrangian Hessian (uBLH), uniform strong second-order sufficiency condition (uSSOSC), and uniform linear independence constraint qualification (uLICQ)}. These assumptions provide basic uniform parameters from which we can establish explicit bounds for the quantities in \eqref{eqn:uniform-bound}.

\begin{assumption}[$L$-Uniformly Bounded Lagrangian Hessian]\label{ass:ublh}
  There exists $L\geq 0$ such that the following holds: $\|\bH\|,\|\bR\|\leq L$; here, $\bH$ and $\bR$ are defined in \eqref{eqn:HR}.
\end{assumption}
\begin{assumption}[$\gamma$-Uniform SSOSC]\label{ass:ussosc}
  There exists $\gamma>0$ such that:\\ $ReH(\bQ,\bA^0)\succeq \gamma \bI$, where $\bA^0:=\bA[\mathcal{A}^0,:]$; here, $\mathcal{A}^0$, $\bQ$, and $\bA$ are defined in \eqref{eqn:AAQA}.
\end{assumption}
\begin{assumption}[$\beta$-Uniform LICQ]\label{ass:ulicq}
  There exists $\beta>0$ such that: $\bA^1 (\bA^1)^\top \succeq \beta \bI$, where $\bA^1:=\bA[\mathcal{A}^1,:]$; here, $\mathcal{A}^1$ and $\bA$ are defined in \eqref{eqn:AAQA}.
\end{assumption}
Assumption \ref{ass:ublh} is an extension of Assumption \ref{ass:c2}; it assumes that the second order derivative not only exists, but also is uniformly bounded. Assumption \ref{ass:ussosc}, \ref{ass:ulicq} are extensions of Assumption \ref{ass:regularity}; the assumptions are strengthened by introducing additional uniform parameters, $\gamma,\beta> 0$. With parameters $L,\gamma,\beta$, we can establish the uniform bounds in \eqref{eqn:uniform-bound}. First, the upper bounds in \eqref{eqn:uniform-bound} are trivially obtained.

\begin{lemma}\label{lem:trivial}
  Under Assumption \ref{ass:ublh}, $\oosigma_{\bH},\oosigma_{\bR}\leq L$ ($\oosigma_{\bH},\oosigma_{\bR}$ are defined in \eqref{eqn:sigmas}).
\end{lemma}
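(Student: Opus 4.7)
The statement is essentially a monotonicity fact for the operator 2-norm under passing to submatrices, combined with the uniform bound on $\|\bH\|$ and $\|\bR\|$ from Assumption \ref{ass:ublh}. The plan is therefore very short.

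First, I would recall the standard fact that for any matrix $M \in \mathbb{R}^{m \times n}$ and any index sets $U \subseteq \mathbb{I}_{[1,m]}$, $V \subseteq \mathbb{I}_{[1,n]}$, the submatrix $M[U,V]$ satisfies $\osigma(M[U,V]) = \|M[U,V]\| \leq \|M\| = \osigma(M)$. This is immediate from the variational characterization $\|M\| = \sup_{\|u\|=\|v\|=1} u^\top M v$: any unit vectors $u',v'$ supported on $U,V$ can be extended by zeros to unit vectors on the full index ranges without changing the value of the bilinear form, so the supremum defining $\|M[U,V]\|$ is taken over a smaller set than that defining $\|M\|$.

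Next, applying this fact with $M = \bH$ and $U = V = \cB$ for any $\cB$ with $\cB^0(\bp^\star) \subseteq \cB \subseteq \cB^1(\bp^\star)$ gives $\osigma(\bH[\cB,\cB]) \leq \|\bH\|$. Similarly, applying it with $M = \bR$, $U = \cB$, and $V = \mathbb{I}_{[1,\bl]}$ gives $\osigma(\bR[\cB,:]) \leq \|\bR\|$. Assumption \ref{ass:ublh} then bounds both $\|\bH\|$ and $\|\bR\|$ by $L$.

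Finally, taking the maximum over all admissible $\cB$ in the definitions of $\oosigma_{\bH}$ and $\oosigma_{\bR}$ preserves the uniform bound $L$, yielding $\oosigma_{\bH}, \oosigma_{\bR} \leq L$. There is no real obstacle here; the only subtlety worth flagging is that the collection of admissible $\cB$ is finite (since $\cB^1(\bp^\star) \setminus \cB^0(\bp^\star)$ is finite), so the maxima in \eqref{eqn:sigmas} are attained and the bound passes through without issue.
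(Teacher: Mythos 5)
Your proposal is correct and follows exactly the paper's (one-line) argument: the operator norm of a submatrix is bounded by that of the full matrix, and Assumption \ref{ass:ublh} bounds $\Vert\bH\Vert$ and $\Vert\bR\Vert$ by $L$. The only difference is that you spell out the variational argument and the finiteness of the collection of admissible $\cB$, which the paper leaves implicit.
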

\begin{proof}
  Follows from $\bH[\cB,\cB]$, $\bR[\cB,:]$ being submatrices of $\bH,\bR$.
\end{proof}

To establish a lower bound for $\uusigma_{\bH}$, we prove the following lemma.
\begin{lemma}\label{lem:uniform-2}
  Under Assumption \ref{ass:ublh}, \ref{ass:ussosc}, \ref{ass:ulicq} and $\overline{\mu}:=(2L^2/\gamma + \gamma+  L)/\beta$,
  \begin{align}\label{eqn:uniform-2}
    \uusigma_{\bH}\geq
    \left(\frac{2}{\gamma} +  \frac{8\overline{\mu} L^2}{\gamma^3\beta} +\frac{4L}{\gamma^2\beta}\right)^{-1}\left(1+\overline{\mu} L\right)^{-1}.
  \end{align}
\end{lemma}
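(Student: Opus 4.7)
Since $\uusigma_{\bH}$ is an infimum of $\usigma(\bH[\cB,\cB])$ over the index sets $\cB$ with $\cB^0\subseteq\cB\subseteq\cB^1$, and $\usigma(M)=1/\|M^{-1}\|$ for any invertible symmetric $M$, the plan is to produce a uniform upper bound on $\|\bH[\cB,\cB]^{-1}\|$. After the permutation that separates primal and dual components, each $\bH[\cB,\cB]$ is the saddle-point matrix with diagonal blocks $\bQ$ and $0$ and off-diagonal block $\bA_{\cB}:=\bA[\cA_{\cB},:]$, where $\cA^0\subseteq\cA_{\cB}\subseteq\cA^1$. The first step is to propagate Assumptions~\ref{ass:ussosc}--\ref{ass:ulicq} to every such subset. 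The matrix $\bA_{\cB}\bA_{\cB}^\top$ is a principal submatrix of $\bA^1(\bA^1)^\top\succeq\beta\bI$, so by Cauchy's interlacing theorem $\bA_{\cB}\bA_{\cB}^\top\succeq\beta\bI$; and since $\cA^0\subseteq\cA_{\cB}$ forces $\ker\bA_{\cB}\subseteq\ker\bA^0$, any orthonormal null-space basis $Z$ of $\bA_{\cB}$ has columns in $\ker\bA^0$, so uSSOSC gives $Z^\top\bQ Z\succeq\gamma\bI$.

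Next I would prove a Debreu-type augmentation lemma: $\widetilde{\bQ}:=\bQ+\overline{\mu}\bA_{\cB}^\top\bA_{\cB}\succeq(\gamma/2)\bI$, which immediately gives $\|\widetilde{\bQ}^{-1}\|\leq 2/\gamma$. Writing $v=Zw+v_\perp$ with $v_\perp\perp\ker\bA_{\cB}$ and combining the three bounds above,
\begin{align*}
v^\top\widetilde{\bQ}v \;\geq\; \gamma\|w\|^2 \;-\; 2L\|w\|\|v_\perp\| \;+\; (\overline{\mu}\beta-L)\|v_\perp\|^2.
\end{align*}
The definition $\overline{\mu}\beta=2L^2/\gamma+\gamma+L$ is calibrated precisely so that the resulting $2\times 2$ quadratic form in $(\|w\|,\|v_\perp\|)$ dominates $(\gamma/2)(\|w\|^2+\|v_\perp\|^2)$; this reduces to verifying $(\gamma/2)(\overline{\mu}\beta-L-\gamma/2)\geq L^2$, a one-line algebraic check.

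The final step converts $\|\widetilde{\bQ}^{-1}\|\leq 2/\gamma$ into a bound on $\|\bH[\cB,\cB]^{-1}\|$. Direct substitution shows that solving $\bH[\cB,\cB][x;y]=[g;h]$ is equivalent to solving $\widetilde{M}[x;y]=[g+\overline{\mu}\bA_{\cB}^\top h;\,h]$, where $\widetilde{M}$ is the saddle-point matrix built from $\widetilde{\bQ}$ in place of $\bQ$; estimating the new right-hand side by $(1+\overline{\mu}L)\|[g;h]\|$ contributes the factor $(1+\overline{\mu}L)^{-1}$ in the claim. Because $\widetilde{\bQ}\succ 0$, the Schur-complement block-inverse formula expresses $\widetilde{M}^{-1}$ in terms of $\widetilde{\bQ}^{-1}$, $\bA_{\cB}$, and $(\bA_{\cB}\widetilde{\bQ}^{-1}\bA_{\cB}^\top)^{-1}$, the last bounded by $\|\widetilde{\bQ}\|/\beta$ via the Rayleigh argument $\bA_{\cB}\widetilde{\bQ}^{-1}\bA_{\cB}^\top\succeq(\beta/\|\widetilde{\bQ}\|)\bI$. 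Combining these estimates with $\|\bA_{\cB}\|\leq L$, $\|\widetilde{\bQ}\|\leq L(1+\overline{\mu}L)$, and the structural inequality $L^2/(\gamma\beta)\leq\overline{\mu}/2$ implicit in the definition of $\overline{\mu}$ yields $\|\widetilde{M}^{-1}\|\leq 2/\gamma+8\overline{\mu}L^2/(\gamma^3\beta)+4L/(\gamma^2\beta)$, after which taking an infimum over $\cB$ produces \eqref{eqn:uniform-2}. The hardest part of the plan is this last bookkeeping step: landing exactly on the three advertised coefficients requires carefully grouping the Schur-complement block contributions rather than summing them naively.
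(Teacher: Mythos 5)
Your plan follows the paper's proof almost step for step: the reduction of $\uusigma_{\bH}$ to bounding $\Vert \bH[\cB,\cB]^{-1}\Vert$ for the saddle-point submatrices indexed by $\cA^0\subseteq\cA\subseteq\cA^1$, the propagation of uSSOSC and uLICQ to every intermediate active set (the paper proves the interlacing inequality $\ulambda(\bA'(\bA')^\top)\geq\ulambda(\bA^1(\bA^1)^\top)$ by an explicit eigenvector extension rather than citing Cauchy interlacing, but the content is identical), the augmentation lemma $\bQ+\overline{\mu}\bA_{\cB}^\top\bA_{\cB}\succeq(\gamma/2)\bI$ (the paper's Lemma \ref{lem:convexification}, with the same calibration of $\overline{\mu}$; your $2\times 2$ quadratic-form check is a correct variant of the paper's one-variable minimization), and the factorization of $\bH[\cB,\cB]^{-1}$ through the augmented system times the unit upper-triangular factor, which contributes $(1+\overline{\mu}L)^{-1}$. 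All of this is sound and matches \eqref{eqn:uniform-2-1}--\eqref{eqn:uniform-2-2}.

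The genuine gap is exactly where you flag it: the final Schur-complement bookkeeping does not land on the advertised coefficients with the estimates you propose. You bound the Schur-complement inverse by $\Vert\widetilde{\bQ}\Vert/\beta\leq L(1+\overline{\mu}L)/\beta$ via $\bA_{\cB}\widetilde{\bQ}^{-1}\bA_{\cB}^\top\succeq(\beta/\Vert\widetilde{\bQ}\Vert)\bI$, whereas the paper works with $S:=(\bA'\,\widetilde{\bQ}\,(\bA')^\top)^{-1}$ and obtains the much stronger bound $\Vert S\Vert\leq 2/(\gamma\beta)$ from $\ulambda(\bA'\widetilde{\bQ}(\bA')^\top)\geq\ulambda(\widetilde{\bQ})\,\ulambda(\bA'(\bA')^\top)\geq(\gamma/2)\beta$; it is this $2/(\gamma\beta)$ that produces the three terms $2/\gamma$, $8\overline{\mu}L^2/(\gamma^3\beta)$, and $4L/(\gamma^2\beta)$ directly. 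With your weaker bound, the $(1,1)$-block contribution $\Vert T\Vert^2\Vert\bA_{\cB}\Vert^2\Vert S\Vert$ is of order $4L^3(1+\overline{\mu}L)/(\gamma^2\beta)$, which contains a piece $4\overline{\mu}L^4/(\gamma^2\beta)$ exceeding $8\overline{\mu}L^2/(\gamma^3\beta)$ whenever $L^2>2/\gamma$; the structural inequality $L^2/(\gamma\beta)\leq\overline{\mu}/2$ trades an $L^2/(\gamma\beta)$ for an $\overline{\mu}/2$ but leaves the surplus powers of $L$ in place, so it cannot close this gap. Your argument as written would still yield a valid uniform lower bound on $\uusigma_{\bH}$, but not the specific bound \eqref{eqn:uniform-2} claimed in the lemma; to recover the stated constants you need the paper's sharper treatment of the Schur-complement block (and the associated block-inverse identity from \cite[Proposition 2.8.7]{bernstein2009matrix}), not merely more careful grouping of the terms you already have.
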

We first prove that $\bQ_{}+\overline{\mu}(\bA^0_{})^\top \bA^0_{}$ is uniformly positive definite (recall that SSOSC does not necessarily guarantee positive definiteness of $\bQ_{}$).

\begin{lemma}\label{lem:convexification}
  Under Assumption \ref{ass:ublh}, \ref{ass:ussosc}, \ref{ass:ulicq}, ${\bQ_{}}+\overline{\mu} (\bA^0_{})^\top {\bA^0_{}}\succeq (\gamma/2) \bI.$
\end{lemma}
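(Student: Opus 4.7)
The plan is to use the standard range/null-space decomposition associated with $\bA^0$ and then combine SSOSC, LICQ, and Young's inequality to pick up the positive-definiteness after penalization by $\overline{\mu}(\bA^0)^\top\bA^0$. Concretely, I would first observe that, since $\cA^0\subseteq \cA^1$, the matrix $\bA^0(\bA^0)^\top$ is a principal submatrix of $\bA^1(\bA^1)^\top$, so Assumption \ref{ass:ulicq} transfers: $\bA^0(\bA^0)^\top \succeq \beta\bI$. Then I would pick an orthonormal basis $Z$ of $\ker(\bA^0)$ and an orthonormal basis $Y$ of $\mathrm{Range}((\bA^0)^\top)$, so that $[Z\ Y]$ is orthogonal and any $v$ decomposes uniquely as $v = Zu + Yw$ with $\|v\|^2 = \|u\|^2 + \|w\|^2$.

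Next, I would compute
\begin{align*}
v^\top\bigl(\bQ + \overline{\mu}(\bA^0)^\top\bA^0\bigr)v
&= u^\top Z^\top\bQ Z u + 2u^\top Z^\top\bQ Y w + w^\top Y^\top\bQ Y w \\
&\quad + \overline{\mu}\,w^\top Y^\top(\bA^0)^\top\bA^0 Y w ,
\end{align*}
using $\bA^0 Z = \bzero$. I would bound each piece using the assumptions: Assumption \ref{ass:ussosc} gives $u^\top Z^\top\bQ Z u \geq \gamma\|u\|^2$; Assumption \ref{ass:ublh} gives $w^\top Y^\top\bQ Y w \geq -L\|w\|^2$ and $|2u^\top Z^\top\bQ Y w|\leq 2L\|u\|\|w\|$; and the transferred LICQ bound, combined with $YY^\top$ being the orthogonal projector onto $\mathrm{Range}((\bA^0)^\top)$, yields $(\bA^0 Y)(\bA^0 Y)^\top = \bA^0 YY^\top(\bA^0)^\top = \bA^0(\bA^0)^\top \succeq \beta\bI$, so $\overline{\mu}\,w^\top Y^\top(\bA^0)^\top\bA^0 Y w \geq \overline{\mu}\beta\|w\|^2$.

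Assembling these and applying Young's inequality $2L\|u\|\|w\| \leq (\gamma/2)\|u\|^2 + (2L^2/\gamma)\|w\|^2$ gives
\begin{align*}
v^\top\bigl(\bQ + \overline{\mu}(\bA^0)^\top\bA^0\bigr)v
\geq \tfrac{\gamma}{2}\|u\|^2 + \bigl(\overline{\mu}\beta - L - \tfrac{2L^2}{\gamma}\bigr)\|w\|^2 .
\end{align*}
Plugging in $\overline{\mu}\beta = 2L^2/\gamma + \gamma + L$, the $\|w\|^2$ coefficient becomes $\gamma \geq \gamma/2$, so the right-hand side is at least $(\gamma/2)(\|u\|^2+\|w\|^2) = (\gamma/2)\|v\|^2$, giving the claim.

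The only conceptually nontrivial step is the transfer of LICQ from $\bA^1$ to $\bA^0$ and the identification $(\bA^0 Y)(\bA^0 Y)^\top = \bA^0(\bA^0)^\top$; everything else is a routine completion-of-the-square argument whose constants must be reconciled with the prescribed value of $\overline{\mu}$. In particular, the stated $\overline{\mu}$ is slightly larger than the minimal one the argument yields (an extra $\gamma/2$ slack), which I would just note and then close the proof.
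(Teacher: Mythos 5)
Your proof is correct and takes essentially the same route as the paper's: the same $Z$/$Y$ null-space/row-space decomposition with $\bA^0 Z=\bzero$, the same bounds from uSSOSC and uBLH, and the same use of $\bA^0 YY^\top(\bA^0)^\top=\bA^0(\bA^0)^\top$, differing only in that you close with Young's inequality on a general vector where the paper restricts to the unit sphere and minimizes the resulting scalar quadratic in $\Vert w_Y\Vert$ (both give the $\gamma/2$ bound with the prescribed $\overline{\mu}$). The one step you make explicit that the paper leaves implicit at this point is the transfer of uniform LICQ from $\bA^1$ to the row-submatrix $\bA^0$ (via the principal-submatrix/interlacing argument), which is indeed needed and is justified in the paper only within the surrounding proof of Lemma \ref{lem:uniform-2}.
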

\begin{proof}
  From Assumption \ref{ass:ublh}, $\Vert \bH\Vert\leq L$; this implies that its submatrices $\bQ,\bA^0$ satisfy $\osigma(\bQ),\osigma(\bA^0)\leq L$. We can obtain $\ulambda(\bQ+\overline{\mu}(\bA^0)^\top \bA^0)$ from:
  \begin{subequations}\label{eqn:eigprob}
    \begin{align}
      \min_w\;& w^\top ({\bQ}+\overline{\mu} (\bA^0)^\top {\bA^0})w\label{eqn:eigprob-obj}\\
      \st\;& \Vert w \Vert =1.\label{eqn:eigprob-con}
    \end{align}
  \end{subequations}
  By fundamental theorem of linear algebra, any $w\in\mathbb{R}^{\bn_{\bx}}$ can be expressed as $w=Zw_Z+ Yw_Y$, where the columns of $Z$ form an orthonormal basis for the null space of ${\bA^0}$ and the columns of $Y$ form an orthonormal basis for the row space of ${\bA^0}$. We have that $\Vert w_Z\Vert^2 + \Vert w_Y\Vert^2=1$, $\Vert Zw_Z\Vert=\Vert w_Z\Vert$, and $\Vert Yw_Y\Vert=\Vert w_Y\Vert$, which follows from \eqref{eqn:eigprob-con} and orthogonality of $Z$ and $Y$. The objective \eqref{eqn:eigprob-obj} satisfies:
  \begin{align}\label{eqn:convexification-1}
    &w^\top ({\bQ}+\overline{\mu} (\bA^0)^\top {\bA^0}) w\\
    &= w_Z^\top Z^\top {\bQ} Zw_Z + 2  w_Y^\top Y^\top {\bQ}Zw_Z + w_Y^\top Y^\top {\bQ}Yw_Y + \overline{\mu}  w_Y^\top Y^\top (\bA^0)^\top {\bA^0}Yw_Y \nonumber\\
    &\geq \gamma  \Vert w_Z\Vert^2- 2\Vert \bQ\Vert\Vert Z w_Z \Vert \Vert Y w_Y\Vert -\Vert \bQ \Vert  \Vert Y w_Y\Vert^2 + \overline{\mu} \ulambda(Y^\top (\bA^0)^\top \bA^0 Y) \Vert w_Y\Vert^2\nonumber\\
    &\geq \gamma \Vert w_Z\Vert^2 - 2\osigma(\bQ) \Vert w_Z\Vert\Vert w_Y\Vert- \osigma(\bQ) \Vert w_Y\Vert^2 + \overline{\mu} \ulambda(\bA^0 YY^\top (\bA^0)^\top )\Vert w_Y\Vert^2\nonumber\\
    &\geq \gamma (1-\Vert w_Y\Vert^2) - 2L\Vert w_Y\Vert- L \Vert w_Y\Vert^2 + \overline{\mu} \ulambda(\bA^0 (\bA^0)^\top )\Vert w_Y\Vert^2\nonumber\\
    &\geq  \gamma  -2L \Vert w_Y\Vert +(\overline{\mu}\beta - \gamma -L)\Vert w_Y\Vert^2.\nonumber
  \end{align}
  The equality follows from $\bA^0 Z=\bzero$; the first inequality follows from (i) Assumption \ref{ass:ussosc}, (ii) submultiplicativity of matrix norms, and (iii) the fact that $w^\top Mw\geq \ulambda(M)\Vert w\Vert^2$ for positive definite $M$; the second inequality comes from (i) the fact that the induced 2-norm is equal to the largest singular value and (ii) the equality $\ulambda(MM^\top)=\ulambda(M^\top M)$ for square $M$; the third inequality follows from (i) Assumption \ref{ass:ublh} and (ii) $\bA^0 YY^\top=\bA^0$ since $Y$ is an orthogonal matrix whose columns span the row space of $\bA^0$; and the last inequality follows from Assumption \ref{ass:ulicq}. We have $L\neq 0$ from SSOSC and LICQ. This implies that $\overline{\mu}\beta -\gamma -L=2L^2/\gamma>0$. Accordingly, the quadratic expression on the right-hand side of the last inequality of \eqref{eqn:convexification-1} is lower-bounded by: $w^\top ({\bQ}+\overline{\mu} (\bA^0)^\top {\bA^0})w \geq \gamma-\frac{L^2}{\overline{\mu}\beta -\gamma -L}  =\gamma/2$.
\end{proof}
\begin{proof}[Proof of Lemma \ref{lem:uniform-2}]
  It suffices to show that $\usigma(\bH[\cB,\cB])$ for $\cB^0\subseteq \cB\subseteq \cB^1$ is lower bounded by the right-hand-side of \eqref{eqn:uniform-2}. We know that $\bH[\cB,\cB]$ is a permutation of:
  \begin{align}\label{eqn:QAmat}
    \begin{bmatrix}
      \bQ &(\bA')^\top\\
      \bA',
    \end{bmatrix}
  \end{align}
  where $\bA':=\bA[\mathcal{A},:]$, and $\mathcal{A}:=(\phi^{-1}(\cB)\setminus\mathbb{I}_{[1,\bn_{\bx}]})-\bn_{\bx}$; here, $\phi:\mathbb{I}_{[1,\bn]}\rightarrow \mathbb{I}_{[1,\bn]}$ is a permutation that achieves $\bz[\phi(i)] = [\bxi;\bbeta][i]$. It thus suffices to show that the smallest singular value of the matrix in \eqref{eqn:QAmat} with $\mathcal{A}^0\subseteq\mathcal{A}\subseteq\mathcal{A}^1$ is lower bounded by the right-hand side of \eqref{eqn:uniform-2}. We now make the following observation:
  \begin{align}\label{eqn:AAs}
    (\bA')^\top \bA'&\succeq(\bA^0)^\top \bA^0 ;\quad
    \ulambda(\bA' (\bA')^\top )\geq\ulambda(\bA^1 (\bA^1)^\top );
  \end{align}
here, the first inequality results from $(\bA')^\top \bA'-(\bA^0)^\top \bA^0 = \bA[\mathcal{A}\setminus \mathcal{A}^0,:]^\top\bA[\mathcal{A}\setminus \mathcal{A}^0,:]\succeq \bzero$. To establish the second inequality, we consider a unit vector $w\in\mathbb{R}^{\bm}$ such that $w[\mathcal{A}]$ is the eigenvector of $\bA'(\bA')^\top$ associated with the smallest eigenvalue and $w[\mathbb{I}_{[1,\bm]}\setminus \mathcal{A}]=\bzero$. We can see that:
  \begin{align}
    \ulambda(\bA^1(\bA^1)^\top)\leq w[\mathcal{A}^1]^\top \bA^1(\bA^1)^\top w[\mathcal{A}^1] = \ulambda(\bA'(\bA')^\top);
  \end{align}
  here, the first inequality follows from the fact that $\lambda(\bA^1(\bA^1)^\top)$ is the smallest  eigenvalue, and the equality follows from the fact that $w[\mathcal{A}^1\setminus\mathcal{A}]=\bzero$. This establishes the second inequality in \eqref{eqn:AAs}.

  We now study the inverse of the matrix in \eqref{eqn:QAmat}; note that $\usigma(\bH)=\Vert \bH^{-1}\Vert$ and
  \begin{subequations}\label{eqn:uniform-2-1}
    \begin{align}
      \begin{bmatrix}
        \bQ &(\bA')^\top\\
        \bA'
      \end{bmatrix}^{-1}
            &=
              \begin{bmatrix}
                \bQ+\overline{\mu}(\bA')^\top \bA' & (\bA')^\top\\
                \bA'
              \end{bmatrix}^{-1}
      \begin{bmatrix}
        \bI & \overline{\mu} (\bA')^\top\\
        &\bI
      \end{bmatrix}
      \\
            &=
              \begin{bmatrix}
                T + \overline{\mu} T (\bA')^\top S\bA' T
                &
                T (\bA')^\top S\\
                S\bA' T
              \end{bmatrix}
      \begin{bmatrix}
        \bI & \overline{\mu}(\bA')^\top\\
        &\bI
      \end{bmatrix},
    \end{align}
  \end{subequations}
  where $T:= (\bQ+\overline{\mu}(\bA')^\top\bA')^{-1}$, and $S:=(\bA' (\bQ+\overline{\mu}(\bA')^\top\bA') (\bA')^\top)^{-1}$;
  here, the first equality can be easily verified; and the second equality follows from \cite[Proposition 2.8.7]{bernstein2009matrix}. Now observe that:
  \begin{align}\label{eqn:'0}
    \ulambda(\bQ+\overline{\mu}(\bA')^\top\bA')&\geq \ulambda(\bQ+\overline{\mu}(\bA^0)^\top\bA^0)\geq\gamma/2;
  \end{align}
here, the first inequality follows from \eqref{eqn:AAs} and the second inequality follows from Lemma \ref{lem:convexification}. Furthermore,
  \begin{align*}
    \ulambda(\bA' (\bQ+\overline{\mu}(\bA')^\top\bA') (\bA')^\top) &\geq \ulambda(\bQ+\overline{\mu}(\bA')^\top\bA')\ulambda(\bA'(\bA')^\top)\geq \gamma\beta/2;
  \end{align*}
  here, the first inequality follows from
  \begin{align*}
    \min_{\Vert w\Vert\leq 1}w^\top(\bA' (\bQ+\overline{\mu}(\bA')^\top\bA') (\bA')^\top)w&\geq \min_{\Vert w\Vert\leq 1} \ulambda(\bQ+\overline{\mu}(\bA')^\top\bA') \Vert(\bA')^\top w\Vert^2 \\
                                                                                  &\geq \ulambda(\bQ+\overline{\mu}(\bA')^\top\bA')\ulambda(\bA'(\bA')^\top),
  \end{align*}
  and the second inequality follows from \eqref{eqn:'0} and \eqref{eqn:AAs}. Thus, $\Vert T\Vert \leq 2/\gamma$ and $\Vert S\Vert \leq 2/\gamma\beta$. By using Lemma \ref{lem:golub} (will be introduced later), the triangle inequality, the submultiplicativity of matrix norms, and the fact that $\bQ$, $\bA'$ are submatrices of $\bH$, we have:
  \begin{subequations}\label{eqn:uniform-2-2}
    \begin{align}
      \left\Vert
      \begin{bmatrix}
        T + \overline{\mu} T (\bA')^\top S\bA' T
        &
        T (\bA')^\top S\\
        S\bA' T
      \end{bmatrix}\right\Vert
        &\leq
          \frac{2}{\gamma} +  \frac{8\overline{\mu} L^2}{\gamma^3\beta} +\frac{4L}{\gamma^2\beta}
      \\
      \left\Vert
      \begin{bmatrix}
        \bI& \overline{\mu}(\bA')^\top\\
        & \bI
      \end{bmatrix}\right\Vert
      &\leq
        1+\overline{\mu} L.
    \end{align}
  \end{subequations}
  Therefore, from \eqref{eqn:uniform-2-1} and \eqref{eqn:uniform-2-2}, we obtain:
  \begin{align}\label{eqn:uniform-done}
    \left\|\begin{bmatrix}\bQ &(\bA')^\top\\ \bA' \end{bmatrix}^{-1} \right\| \leq
    \left(\frac{2}{\gamma} +  \frac{8\overline{\mu} L^2}{\gamma^3\beta} +\frac{4L}{\gamma^2\beta}\right)\left(1+\overline{\mu} L\right).
  \end{align}
  Because \eqref{eqn:uniform-done} holds for any $\mathcal{A}^0\subset\mathcal{A}\subset\mathcal{A}^1$, the desired condition is obtained.
\end{proof}

We have established in Lemma \ref{lem:trivial}, \ref{lem:uniform-2} that Assumption \ref{ass:ublh}, \ref{ass:ussosc}, \ref{ass:ulicq} guarantee the desired uniform boundedness \eqref{eqn:uniform-bound}. The result is summarized as follows.
\begin{theorem}\label{thm:summary-1}
  Under Assumption \ref{ass:ublh}, \ref{ass:ussosc},  \ref{ass:ulicq}, we have that:
  \begin{align*}
    \oosigma_{\bH}\leq L,\quad
    \oosigma_{\bR}\leq L,\quad
    \uusigma_{\bH}\geq \left(\frac{2}{\gamma} +  \frac{8\overline{\mu} L^2}{\gamma^3\beta} +\frac{4L}{\gamma^2\beta}\right)^{-1}\left(1+\overline{\mu} L\right)^{-1},
  \end{align*}
  where $\overline{\mu}$ is defined in Lemma \ref{lem:uniform-2} and $\oosigma_{\bH},\oosigma_{\bR},\uusigma_{\bH}$ are defined in Theorem \ref{thm:main}.
\end{theorem}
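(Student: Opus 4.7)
The theorem consolidates the two preceding lemmas, so the plan is simply to combine Lemma \ref{lem:trivial} and Lemma \ref{lem:uniform-2} and then invoke the definitions in \eqref{eqn:sigmas}. For the first two bounds, I would appeal directly to Lemma \ref{lem:trivial}: Assumption \ref{ass:ublh} gives $\|\bH\|,\|\bR\|\leq L$, and because every $\bH[\cB,\cB]$ with $\cB^0\subseteq \cB\subseteq \cB^1$ and every $\bR[\cB,:]$ is a submatrix of the parent matrix, its induced 2-norm (largest singular value) cannot exceed that of the parent. Taking the maximum over admissible $\cB$ in \eqref{eqn:sigmas} preserves the bound $L$, yielding $\oosigma_{\bH}\leq L$ and $\oosigma_{\bR}\leq L$.

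The lower bound on $\uusigma_{\bH}$ is the substantive content and is exactly the statement of Lemma \ref{lem:uniform-2}. The value added by Theorem \ref{thm:summary-1} is packaging: the resulting estimate is written purely in terms of the problem-independent uniform parameters $L,\gamma,\beta$ (through $\overline{\mu}$), so it can be plugged into Theorem \ref{thm:main} to produce a uniform sensitivity magnitude $\Upsilon$ and decay rate $\rho$ across the family $\{P_{(k)}\}_{k\in K}$. In the proof itself I would just say: by Lemma \ref{lem:uniform-2}, the stated lower bound holds; this completes the three estimates.

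The genuinely hard work has already been carried out inside Lemma \ref{lem:uniform-2} and its supporting Lemma \ref{lem:convexification}. The main obstacle there is that $\bH[\cB,\cB]$ is an indefinite saddle-point block in which $\bQ$ need not be positive definite, so no direct Cholesky- or Gershgorin-type argument applies. The resolution, which I would simply cite rather than repeat, is to (i) convexify by adding $\overline{\mu}(\bA')^\top \bA'$ with $\overline{\mu}$ large enough that uSSOSC on the null space plus uLICQ on the row space dominates the indefinite cross terms, (ii) invert the augmented saddle-point system via a block factorization, and (iii) bound each factor in terms of $L,\gamma,\beta,\overline{\mu}$, uniformly over all $\mathcal{A}^0\subseteq \mathcal{A}\subseteq \mathcal{A}^1$. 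Because the bound in Lemma \ref{lem:uniform-2} already holds for every admissible $\cB$, it in particular survives the infimum in the definition of $\uusigma_{\bH}$, which is exactly what Theorem \ref{thm:summary-1} claims.
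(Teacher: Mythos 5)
Your proposal is correct and matches the paper exactly: the paper's proof of this theorem is a one-line citation of Lemma \ref{lem:trivial} (for the upper bounds on $\oosigma_{\bH},\oosigma_{\bR}$) and Lemma \ref{lem:uniform-2} (for the lower bound on $\uusigma_{\bH}$), which is precisely what you do. Your accompanying summary of the convexification and block-factorization argument inside Lemma \ref{lem:uniform-2} is also an accurate description of where the real work lies.
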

\begin{proof}
  The result follows directly from Lemma \ref{lem:trivial}, \ref{lem:uniform-2}.
\end{proof}

Now that we have the uniform upper and lower bounds of $\oosigma_{\bH},\oosigma_{\bR},\uusigma_{\bH}$, all the quantities in Theorem \ref{thm:main} can be expressed in terms of $L$, $\gamma$, and $\beta$. Accordingly, we can uniformly bound the parameters $(\Upsilon,\rho)$ in Theorem \ref{thm:main} using Theorem \ref{thm:summary-1}.

\subsection{Sufficient Conditions for Uniformly Bounded Lagrangian Hessian}\label{sec:block-0}
We now show that uBLH (Assumption \ref{ass:ublh}) can be established from {\it composable} sufficient conditions. In particular, we assume the boundedness conditions for each node and the boundedness in the graph degree to establish the uniform boundedness condition for the full problem. The key assumptions are stated as follows.

\begin{assumption}[Uniformly Bounded Degree of Graphs]\label{ass:uniform-degree}
  There exists $D\in\mathbb{I}_{>0}$ such that $|\cN_{\cG}[i]|\leq D$ for any $ i\in \cV$. 
\end{assumption}
\begin{assumption}[Uniformly Bounded Second Derivatives]\label{ass:uniform-2nd}
  There exists $C\geq 0$ such that $ \Vert H_{ij} \Vert, \Vert R_{ij}\Vert \leq C$ for any $i,j\in \cV$, where $H_{ij}$ and $R_{ij}$ are defined in \eqref{eqn:HRij}.
\end{assumption}

The following lemma establishes that Assumptions \ref{ass:uniform-degree}, \ref{ass:uniform-2nd} imply uBLH.
\begin{lemma}\label{lem:uniform-1}
  Under Assumption \ref{ass:uniform-degree}, \ref{ass:uniform-2nd}, we have that $\|\bH\|,\|\bR\|\leq CD^2.$
\end{lemma}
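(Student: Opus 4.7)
The argument rests on two ingredients already available: (i) both $\bH$ and $\bR$ have bandwidth at most two induced by $(\cG,\cI,\cI)$ and $(\cG,\cI,\cK)$, respectively (this was observed right after \eqref{eqn:HRij}); and (ii) each individual nonzero block is uniformly bounded in norm by $C$ (Assumption \ref{ass:uniform-2nd}). The plan is to bound the operator norm directly via a Cauchy--Schwarz / counting argument, where the counting step uses Assumption \ref{ass:uniform-degree}.

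First, I would establish a cardinality bound on the two-hop closed neighborhood. Writing $\cN_\cG^2[i] := \{j\in\cV : d_\cG(i,j)\leq 2\} = \bigcup_{k\in\cN_\cG[i]} \cN_\cG[k]$, a union bound followed by Assumption \ref{ass:uniform-degree} yields
\begin{equation*}
  |\cN_\cG^2[i]| \;\leq\; \sum_{k\in\cN_\cG[i]} |\cN_\cG[k]| \;\leq\; D\cdot D \;=\; D^2,
  \qquad i\in\cV.
\end{equation*}
This is the only step that genuinely uses the bounded degree hypothesis, and it is the key combinatorial input.

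Next, I would bound $\|\bH\|$ by acting on an arbitrary vector $\bv=\{v_i\}_{i\in\cV}$. Since $\bH$ has bandwidth not greater than two, the $i$-th block of $\bH\bv$ only collects contributions from $j\in\cN_\cG^2[i]$, so by the triangle inequality and Assumption \ref{ass:uniform-2nd},
\begin{equation*}
  \bigl\|(\bH\bv)_i\bigr\| \;\leq\; \sum_{j\in\cN_\cG^2[i]} \|H_{ij}\|\,\|v_j\| \;\leq\; C \sum_{j\in\cN_\cG^2[i]} \|v_j\|.
\end{equation*}
Squaring and applying Cauchy--Schwarz (using the cardinality bound above) gives
\begin{equation*}
  \bigl\|(\bH\bv)_i\bigr\|^2 \;\leq\; C^2 |\cN_\cG^2[i]| \sum_{j\in\cN_\cG^2[i]} \|v_j\|^2 \;\leq\; C^2 D^2 \sum_{j\in\cN_\cG^2[i]} \|v_j\|^2.
\end{equation*}
Summing over $i\in\cV$ and swapping the order of summation, then using the cardinality bound a second time on $|\{i: j\in\cN_\cG^2[i]\}|\leq D^2$ (which holds by symmetry of $d_\cG$), produces $\|\bH\bv\|^2 \leq C^2 D^4 \|\bv\|^2$, hence $\|\bH\|\leq CD^2$.

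Finally, for $\|\bR\|$ I would repeat the exact same three-line argument verbatim, using that $\bR$ also has bandwidth at most two and that $\|R_{ij}\|\leq C$. The only obstacle worth noting is making the double use of the bound $|\cN_\cG^2[i]|\leq D^2$ clean (once as the counting factor in Cauchy--Schwarz, once as the column-support bound after swapping sums); everything else is bookkeeping.
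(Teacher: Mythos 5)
Your proof is correct, and its combinatorial core is identical to the paper's: bandwidth two plus bounded degree gives at most $D^2$ nonzero blocks in each block row and each block column, each of norm at most $C$ by Assumption \ref{ass:uniform-2nd}. (Your union bound $|\cN^2_\cG[i]|\le\sum_{k\in\cN_\cG[i]}|\cN_\cG[k]|\le D^2$ is a slightly cruder count than the paper's $1+(D-1)+(D-1)^2\le D^2$, but it lands on the same constant.) The only real difference is the finish: the paper converts the row- and column-sum bounds into an operator-norm bound by invoking its Lemma \ref{lem:golub}, the block generalization of $\Vert M\Vert\le(\Vert M\Vert_1\Vert M\Vert_\infty)^{1/2}$, whereas you prove the needed inequality inline by a double application of Cauchy--Schwarz on $\Vert\bH\bv\Vert^2$. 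These are two standard proofs of essentially the same Schur-test-type estimate, and both yield exactly $CD^2$. Your route is self-contained and avoids stating a separate lemma; the paper's route factors the inequality out because Lemma \ref{lem:golub} is reused later (in the proof of Lemma \ref{lem:uniform-2}). Your remark that $\bR$ is handled verbatim is fine: rectangularity is immaterial since the blocks are indexed by the same graph, and the column-support count $|\{i:j\in\cN^2_\cG[i]\}|\le D^2$ follows from symmetry of $d_\cG$ as you note.
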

In order to prove this lemma, we first need to establish a general inequality for matrix norms. The following lemma is a generalization of {the} inequality $\Vert M\Vert \leq \left(\Vert M \Vert_1 \Vert M\Vert_\infty\right)^{1/2}$ \cite[Corollary 2.3.2]{golub2012matrix}.
\begin{lemma}\label{lem:golub}
  Consider $M\in\mathbb{R}^{m\times n}$ with index set families $\cI:=\{I_i\}_{i\in\cV}$ and $\cJ:=\{J_i\}_{i\in\cV}$ partitioning $\mathbb{I}_{[1,m]}$ and $\mathbb{I}_{[1,n]}$. The following holds, where $M_{[i][j]}:=M[I_i,J_j]$.
  \begin{align}\label{eqn:golub}
    \osigma(M)\leq \Big(\Big(\max_{i\in \cV}\sum_{j\in \cV}\Vert M_{[i][j]}\Vert\Big)\Big(\max_{j\in \cV}\sum_{i\in \cV}\Vert M_{[i][j]}\Vert\Big)\Big)^{1/2}
  \end{align}
\end{lemma}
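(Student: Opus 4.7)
The plan is to generalize the standard scalar proof of $\Vert M\Vert\leq (\Vert M\Vert_1\Vert M\Vert_\infty)^{1/2}$ to the block setting, using the triangle inequality and submultiplicativity of norms in place of absolute values, followed by a Cauchy--Schwarz step. The key observation is that for any vector $v\in\mathbb{R}^n$, if we partition $v$ block-wise according to $\cJ$ by setting $v_j := v[J_j]$, then $(Mv)[I_i]=\sum_{j\in\cV} M_{[i][j]} v_j$, and $\Vert v\Vert^2=\sum_{j\in\cV}\Vert v_j\Vert^2$ because $\cJ$ is a partition.

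First, I would bound $\Vert Mv\Vert^2$ block-by-block. By the triangle inequality and submultiplicativity,
\begin{align*}
\Vert (Mv)[I_i]\Vert \;\leq\; \sum_{j\in\cV} \Vert M_{[i][j]}\Vert\,\Vert v_j\Vert.
\end{align*}
Then, writing $\Vert M_{[i][j]}\Vert\,\Vert v_j\Vert = \Vert M_{[i][j]}\Vert^{1/2}\cdot \Vert M_{[i][j]}\Vert^{1/2}\Vert v_j\Vert$ and applying Cauchy--Schwarz in the index $j$, I get
\begin{align*}
\Vert (Mv)[I_i]\Vert^2 \;\leq\; \Bigl(\sum_{j\in\cV} \Vert M_{[i][j]}\Vert\Bigr)\Bigl(\sum_{j\in\cV} \Vert M_{[i][j]}\Vert\,\Vert v_j\Vert^2\Bigr).
\end{align*}

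Next I would sum over $i\in\cV$, pull out the maximum of the first factor, swap the order of summation in the second factor, and pull out the maximum column-block sum:
\begin{align*}
\Vert Mv\Vert^2
&\;\leq\; \Bigl(\max_{i\in\cV}\sum_{j\in\cV}\Vert M_{[i][j]}\Vert\Bigr)\sum_{j\in\cV}\Vert v_j\Vert^2\sum_{i\in\cV}\Vert M_{[i][j]}\Vert\\
&\;\leq\; \Bigl(\max_{i\in\cV}\sum_{j\in\cV}\Vert M_{[i][j]}\Vert\Bigr)\Bigl(\max_{j\in\cV}\sum_{i\in\cV}\Vert M_{[i][j]}\Vert\Bigr)\Vert v\Vert^2.
\end{align*}
Taking square roots and using $\osigma(M)=\sup_{v\neq 0}\Vert Mv\Vert/\Vert v\Vert$ delivers \eqref{eqn:golub}.

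The proof is essentially a bookkeeping exercise, so no real obstacle is expected; the only subtlety is the balanced split of $\Vert M_{[i][j]}\Vert$ in the Cauchy--Schwarz step, which is precisely what produces the geometric mean on the right-hand side. When $\cI$ and $\cJ$ are the finest partitions (singletons) and $M_{[i][j]}$ are scalars, the inequality collapses to the classical bound $\Vert M\Vert\leq(\Vert M\Vert_1\Vert M\Vert_\infty)^{1/2}$, confirming the generalization.
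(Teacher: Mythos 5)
Your proof is correct, but it takes a genuinely different route from the paper's. The paper picks the top left singular vector $v$ satisfying $\osigma(M)^2 v = MM^\top v$, sets $u=M^\top v$, and chains two block-$\ell_1$ estimates --- bounding $\osigma(M)^2\sum_{i}\Vert v_{[i]}\Vert$ by the max column-block sum times $\sum_j\Vert u_{[j]}\Vert$, and then $\sum_j\Vert u_{[j]}\Vert$ by the max row-block sum times $\sum_i\Vert v_{[i]}\Vert$ --- so that the $\ell_1$-type quantities cancel (which requires noting $v\neq\bzero$). You instead run a block Schur-test: for an arbitrary $v$ you bound $\Vert(Mv)[I_i]\Vert$ by the triangle inequality, apply Cauchy--Schwarz with the balanced split $\Vert M_{[i][j]}\Vert^{1/2}\cdot\Vert M_{[i][j]}\Vert^{1/2}\Vert v_j\Vert$, sum over $i$, and swap the order of summation. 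Both arguments are block-wise liftings of standard proofs of $\Vert M\Vert\leq(\Vert M\Vert_1\Vert M\Vert_\infty)^{1/2}$ and produce the identical constant; yours has the minor advantages of not needing the existence of the extremal singular vector or the division by $\sum_i\Vert v_{[i]}\Vert$, and of directly exhibiting the bound on $\Vert Mv\Vert/\Vert v\Vert$ for every $v$, while the paper's is slightly shorter in bookkeeping since it never squares the block norms. Your closing sanity check (collapse to the scalar case under singleton partitions) matches the remark the paper makes just before the lemma.
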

\begin{proof}
  The inequality holds trivially if $M=\bzero$; we thus assume $M\neq \bzero$. Consider the left singular vector $v\in \mathbb{R}^n$ of $M$ with singular value $\osigma(M)$. We have that $\osigma(M)^2 v =  M M^\top v$. We let $u=M^\top v$, which yields $\osigma(M)^2 v =  M u$; accordingly,
  \begin{align*}
    \osigma(M)^2 \sum_{i\in \cV}\Vert v_{[i]}\Vert\leq  \sum_{j\in \cV}(\sum_{i\in \cV}\Vert M_{[i][j]}\Vert) \Vert u_{[j]} \Vert\leq \Big(\max_{j\in \cV}\sum_{i\in \cV} \Vert M_{[i][j]}\Vert \Big)\Big(\sum_{j\in \cV}\Vert u_{[j]}\Vert\Big),
  \end{align*}
  where the first inequality is obtained by applying the triangle inequality and the submultiplicativity of the matrix norm, and by switching the order of summation; the second inequality is obtained from $\sum_{i\in \cV}\Vert M_{[i][j]}\Vert \leq \max_{j\in \cV}\sum_{i\in \cV}\Vert M_{[i][j]}\Vert.$ Using the same logic, we obtain: $\sum_{j\in \cV}\Vert u_{[j]}\Vert \leq\left(\max_{i\in \cV}\sum_{j\in\cV}\Vert M_{[i][j]} \Vert\right)\left(\sum_{i\in \cV}\Vert v_{[i]}\Vert\right).$ From these results and the fact that $v\neq \bzero$ (by $M\neq\bzero$), we obtain \eqref{eqn:golub}.
\end{proof}

\begin{proof}[Proof of Lemma \ref{lem:uniform-1}]
  It suffices to show that $\osigma(\bH),\osigma(\bR) \leq CD^2$. As observed in Section \ref{sec:nodal}, $\bH$ and $\bR$ have bandwidth not greater than two since $H_{ij}$ and $R_{ij}$ equal zero if $d_\cG(i,j)>2$. Hence, the number of nonzero blocks on one-block rows or on one-block columns of $\bH$ and $\bR$ is at most $ D^2$, since $$|N^2_\cG[i]|\leq \underbrace{1}_{\text{$i$ itself}}+\underbrace{(D-1)}_{\text{nodes with distance $1$}}+\underbrace{(D-1)^2}_{\text{nodes with distance $2$}}\leq D^2$$ for any $i\in \cV$ (i.e., for any node, there exist at most $D^2$ nodes within distance two); here, the second inequality follows from $D\geq 1$ (the graph is nonempty). As such, we have: $\max_{i\in \cV}\sum_{j\in \cV}\Vert H_{ij}\Vert\vee \max_{j\in \cV}\sum_{i\in \cV}\Vert H_{ij}\Vert\leq CD^2$ and $\max_{i\in \cV}\sum_{j\in \cV}\Vert R_{ij}\Vert\vee\max_{j\in \cV}\sum_{i\in \cV}\Vert R_{ij}\Vert\leq CD^2$. By Lemma \ref{lem:golub}, $\osigma(\bH)\leq CD^2$ and $\osigma(\bR)\leq CD^2$.
\end{proof}

\subsection{Sufficient Conditions for Uniform SSOSC and LICQ}\label{sec:block}
We now provide {\it composable sufficient conditions} for uSSOSC and uLICQ. In particular, we establish sufficient conditions for uSSOSC and uLICQ that do not require checking singular values of indefinitely large matrices. A key characteristic of such problems is that there exists a recurrent structure (as depicted in Figure \ref{fig:graphs}). As such, we can construct sufficient conditions based on uSSOSC and uLICQ over {\em blocks} of the Hessian and Jacobian matrices, induced a partition $\cJ_{}:=\{J_{(q)}\}_{q\in\mathbb{I}_{[1,\overline{q}]}}$ of the primal variable index set $\mathbb{I}_{[1,\bn_{\bx}]}$. To state these assumptions, we define the following submatrices of $\bQ_{}$ and $\bA_{}$ for each $q\in \mathbb{I}_{[1,\overline{q}]}$: 
\begin{align*}\small
  \bQ_{(q)}:= \bQ_{}[J_{(q)},J_{(q)}],\;
  \bQ_{(-q)}:= \bQ_{}[J_{(q)},J_{(-q)}],\;
  \bA^{-}_{(q)}:= \bA_{}[\mathcal{A}^-_{(q)},J_{(q)}],\;
  \bA^{+}_{(q)} := \bA_{}[\mathcal{A}^+_{(q)},J_{(q)}], 
\end{align*}
where: $J_{(-q)}:=\mathbb{I}_{[1,\bn_{\bx}]}\setminus J_{(q)}$, $\mathcal{A}^-_{(q)}:=\{i\in\mathcal{A}^0_{}: \bA_{}[i,\mathbb{I}_{[1,\bn_{\bx}]}\setminus J_{(q)}]= \bzero\}$, and $\mathcal{A}^+_{(q)}:=\{i\in\mathcal{A}^1_{}:\bA_{}[i,J_{(q)}]\neq\bzero\}$. In words, $\mathcal{A}^-_{(q)}$ denotes the set of constraint indices that are exclusively coupled with the variables in $J_{(q)}$, and $\mathcal{A}^+_{(q)}$ denotes the set of constraint indices that have nonempty coupling with the variables in $J_{(q)}$. Based on these, we now present the key assumptions.

\begin{assumption}[Block Diagonal $\bQ$]\label{ass:blkdiag}
  $\bQ_{(-q)} = \bzero$ for $q\in\mathbb{I}_{[1,\overline{q}]}$.
\end{assumption}
\begin{assumption}[Nonzero Rows of $\bA$]\label{ass:nzrow}
  $\bA[i,:] \neq \bzero$ for $i\in\mathcal{A}^1$.
\end{assumption}
\begin{assumption}[Block SSOSC]\label{ass:block-SSOSC}
  $\exists\gamma>0$: $ReH(\bQ_{(q)},\bA^{-}_{(q)}) \succeq \gamma \bI$ for $q\in\mathbb{I}_{[1,\overline{q}]}$.
\end{assumption}
\begin{assumption}[Block LICQ]\label{ass:block-licq}
  $\exists\beta>0$: $\bA^{+}_{(q)} (\bA^{+}_{(q)})^\top \succeq \beta \bI$ for $q\in\mathbb{I}_{[1,\overline{q}]}$.
\end{assumption}
Note that Assumption \ref{ass:blkdiag} does not assume separability of the problem; a block-diagonal structure in $\bQ$ is obtained when coupling across blocks exists only via linear constraints. This is not a restrictive assumption since any problem of the form \eqref{eqn:prob} can be reformulated into a form with linear coupling by introducing auxiliary variables (i.e., via a {\it lifting} procedure). Assumption \ref{ass:nzrow} is not difficult to satisfy. We now show that the above assumptions guarantee uSSOSC and uLICQ for the original NLP \eqref{eqn:prob}. 

\begin{lemma}\label{lem:bssosc}
  Under Assumption \ref{ass:blkdiag}, \ref{ass:block-SSOSC} we have $ReH(\bQ,\bA^0)\succeq \gamma \bI.$
\end{lemma}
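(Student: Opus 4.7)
The reduced Hessian $ReH(\bQ,\bA^0) = Z^\top \bQ Z$, where the columns of $Z$ form an orthonormal basis of the null space of $\bA^0$. Showing $ReH(\bQ,\bA^0) \succeq \gamma \bI$ is equivalent to showing $w^\top \bQ w \geq \gamma \|w\|^2$ for every $w \in \mathbb{R}^{\bn_{\bx}}$ satisfying $\bA^0 w = \bzero$. My plan is to reduce this single global inequality to the block-wise inequalities supplied by Assumption \ref{ass:block-SSOSC} using the block-diagonal structure of $\bQ$ from Assumption \ref{ass:blkdiag}, together with the observation that the rows in $\mathcal{A}^-_{(q)}$ couple only the variables indexed by $J_{(q)}$.

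\textbf{Main steps.} First, I would fix an arbitrary $w \in \mathbb{R}^{\bn_{\bx}}$ with $\bA^0 w = \bzero$ and write $w_{(q)} := w[J_{(q)}]$. Because $\{J_{(q)}\}_{q \in \mathbb{I}_{[1,\overline{q}]}}$ partitions $\mathbb{I}_{[1,\bn_{\bx}]}$, we have $\|w\|^2 = \sum_{q} \|w_{(q)}\|^2$. Second, I would invoke Assumption \ref{ass:blkdiag} ($\bQ_{(-q)} = \bzero$) to get the block-diagonal decomposition
\begin{equation*}
  w^\top \bQ w = \sum_{q \in \mathbb{I}_{[1,\overline{q}]}} w_{(q)}^\top \bQ_{(q)} w_{(q)}.
\end{equation*}
Third, I would use the definition $\mathcal{A}^-_{(q)} = \{i \in \mathcal{A}^0 : \bA[i,\mathbb{I}_{[1,\bn_{\bx}]}\setminus J_{(q)}] = \bzero\} \subseteq \mathcal{A}^0$ to observe that the constraint $\bA^0 w = \bzero$ implies $\bA^-_{(q)} w_{(q)} = \bzero$ for every $q$, since the corresponding rows of $\bA^0$ vanish outside $J_{(q)}$. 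Fourth, Assumption \ref{ass:block-SSOSC} says $ReH(\bQ_{(q)},\bA^-_{(q)}) \succeq \gamma \bI$, which means $w_{(q)}^\top \bQ_{(q)} w_{(q)} \geq \gamma \|w_{(q)}\|^2$ whenever $w_{(q)} \in \mathrm{null}(\bA^-_{(q)})$. Summing these over $q$ yields $w^\top \bQ w \geq \gamma \sum_{q} \|w_{(q)}\|^2 = \gamma \|w\|^2$, which completes the proof.

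\textbf{Obstacles.} I do not foresee a serious obstacle; the argument is essentially a bookkeeping exercise. The only subtle point is step three: one needs the fact that restricting an equality $\bA^0 w = \bzero$ to rows in $\mathcal{A}^-_{(q)}$ gives an equation solely in $w_{(q)}$, which is guaranteed precisely by the definition of $\mathcal{A}^-_{(q)}$ (rows that are supported in $J_{(q)}$). A minor care is required to ensure that the characterization of block reduced Hessian positive definiteness ``$\bQ_{(q)}$ restricted to $\mathrm{null}(\bA^-_{(q)})$ is lower bounded by $\gamma \bI$'' is applied correctly, but this follows directly from the standard equivalence between $ReH(\cdot,\cdot) \succeq \gamma \bI$ and the quadratic inequality on the null space.
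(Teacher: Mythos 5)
Your proposal is correct and follows essentially the same route as the paper's proof: both use the block-diagonal decomposition of $w^\top \bQ w$ from Assumption \ref{ass:blkdiag}, observe that $\bA^0 w = \bzero$ implies $\bA^-_{(q)} w[J_{(q)}] = \bzero$ because the rows in $\mathcal{A}^-_{(q)}$ are supported only on $J_{(q)}$, apply the block SSOSC bound to each block, and sum over the partition. No gaps; the bookkeeping you describe is exactly what the paper does.
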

\begin{proof}
  From the block diagonal structure of $\bQ$ (Assumption \ref{ass:blkdiag}), $\bx^\top\bQ\bx = \sum_{q\in\mathbb{I}_{[1,\overline{q}]}} \bx[J_{(q)}]^\top\bQ_{(q)} \bx[J_{(q)}].$ If $\bA^0\bx=\bzero$, $\bA^-_{(q)} \bx[J_{(q)}]=\bzero$ holds for $q\in\mathbb{I}_{[1,\overline{q}]}$; therefore, by Assumption \ref{ass:block-SSOSC}, the following can be obtained: if $\bA^0\bx = \bzero$,
  \begin{align}\label{eqn:block-SSOSC-1}
    \sum_{q\in\mathbb{I}_{[1,\overline{q}]}} \bx[J_{(q)}]^\top\bQ_{(q)} \bx[J_{(q)}]\geq  \sum_{q\in\mathbb{I}_{[1,\overline{q}]}}\gamma \Vert \bx[J_{(q)}]\Vert^2 = \gamma \Vert \bx\Vert^2.
  \end{align}
  Here, the last equality follows from the fact that $\{J_{(q)}\}_{q\in\mathbb{I}_{[1,\overline{q}]}}$ partitions $\mathbb{I}_{[1,\bn_{\bx}]}$. From \eqref{eqn:block-SSOSC-1}, we obtain the desired result.
\end{proof}

\begin{lemma}\label{lem:blicq}
  Under Assumption \ref{ass:nzrow}, \ref{ass:block-licq} we have $\bA^1(\bA^1)^\top \succeq \beta\bI.$
\end{lemma}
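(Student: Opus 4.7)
The plan is to prove the inequality $\bA^1(\bA^1)^\top \succeq \beta \bI$ directly by showing $v^\top \bA^1(\bA^1)^\top v \geq \beta \Vert v \Vert^2$ for an arbitrary vector $v\in\mathbb{R}^{|\mathcal{A}^1|}$ indexed by $\mathcal{A}^1$. The key idea is to exploit the column partition $\{J_{(q)}\}_{q\in\mathbb{I}_{[1,\overline{q}]}}$ of the primal-variable index set and translate the block LICQ (Assumption \ref{ass:block-licq}) into a bound on the full matrix.

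First, I would write
\begin{align*}
 v^\top \bA^1(\bA^1)^\top v = \Vert (\bA^1)^\top v\Vert^2 = \sum_{q\in\mathbb{I}_{[1,\overline{q}]}} \Vert \bA^1[:,J_{(q)}]^\top v\Vert^2,
\end{align*}
using that $\{J_{(q)}\}_{q}$ partitions $\mathbb{I}_{[1,\bn_{\bx}]}$. Next, by the definition of $\mathcal{A}^+_{(q)}$, any row $i\in\mathcal{A}^1\setminus\mathcal{A}^+_{(q)}$ satisfies $\bA[i,J_{(q)}]=\bzero$, so $\bA^1[:,J_{(q)}]^\top v = (\bA^+_{(q)})^\top v[\mathcal{A}^+_{(q)}]$. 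Assumption \ref{ass:block-licq} then yields
\begin{align*}
 \Vert \bA^1[:,J_{(q)}]^\top v \Vert^2 = v[\mathcal{A}^+_{(q)}]^\top \bA^+_{(q)}(\bA^+_{(q)})^\top v[\mathcal{A}^+_{(q)}] \geq \beta\, \Vert v[\mathcal{A}^+_{(q)}]\Vert^2.
\end{align*}

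Finally, I would sum over $q$ and invoke Assumption \ref{ass:nzrow}: for every $i\in\mathcal{A}^1$, the row $\bA[i,:]$ is nonzero, hence $\bA[i,J_{(q)}]\neq\bzero$ for at least one $q$, i.e.\ $i\in\mathcal{A}^+_{(q)}$ for at least one $q$. Consequently
\begin{align*}
 \sum_{q\in\mathbb{I}_{[1,\overline{q}]}} \Vert v[\mathcal{A}^+_{(q)}]\Vert^2 = \sum_{q\in\mathbb{I}_{[1,\overline{q}]}} \sum_{i\in\mathcal{A}^+_{(q)}} v[i]^2 \geq \sum_{i\in\mathcal{A}^1} v[i]^2 = \Vert v\Vert^2,
\end{align*}
where the inequality only uses that each $i\in\mathcal{A}^1$ is covered at least once (overlaps are harmless since each term is nonnegative). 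Combining the three displays yields $v^\top \bA^1(\bA^1)^\top v \geq \beta \Vert v\Vert^2$, which is the claim.

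There is no real obstacle here; the only subtle point is the bookkeeping between $\mathcal{A}^1$ and the (possibly overlapping) collection $\{\mathcal{A}^+_{(q)}\}_q$, which is exactly what Assumption \ref{ass:nzrow} is designed to handle by ensuring every active constraint lands in some block, thereby preventing $\Vert v\Vert^2$ from outrunning $\sum_q \Vert v[\mathcal{A}^+_{(q)}]\Vert^2$.
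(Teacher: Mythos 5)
Your proof is correct and follows essentially the same route as the paper's: both decompose $\bA^1(\bA^1)^\top$ (equivalently $\Vert(\bA^1)^\top v\Vert^2$) over the column partition $\{J_{(q)}\}_q$, use that $\bA[\mathcal{A}^1\setminus\mathcal{A}^+_{(q)},J_{(q)}]=\bzero$ to reduce each term to the block $\bA^+_{(q)}$, apply Assumption \ref{ass:block-licq} blockwise, and finish with the covering property $\bigcup_q \mathcal{A}^+_{(q)}=\mathcal{A}^1$ supplied by Assumption \ref{ass:nzrow}. No gaps.
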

\begin{proof}
  We have that for any $\by\in\mathbb{I}_{[1,\bm]}$,
  \begin{align*}
    \by[\mathcal{A}^1]^\top \bA^1 (\bA^1)^\top \by[\mathcal{A}^1]
    &= \by[\mathcal{A}^1]^\top (\sum_{q\in\mathbb{I}_{[1,\overline{q}]}} \bA[\mathcal{A}^1,J_{(q)}] (\bA[\mathcal{A}^1,J_{(q)}])^\top) \by[\mathcal{A}^1] \\
    &= \sum_{q\in\mathbb{I}_{[1,\overline{q}]}} (\by [\mathcal{A}^+_{(q)}])^\top \bA^+_{(q)} (\bA^+_{(q)})^\top \by [\mathcal{A}^+_{(q)}].
  \end{align*}
  Here the second equality follows from $\bA[\mathcal{A}^1\setminus \mathcal{A}^+_{(q)},J_{(q)}]=\bzero$. By Assumption \ref{ass:block-licq},
  \begin{align}\label{eqn:block-licq-1}
    \by[\mathcal{A}^1]^\top \bA^1 (\bA^1)^\top \by[\mathcal{A}^1]
    &\geq \sum_{q\in\mathbb{I}_{[1,\overline{q}]}} \beta\Vert\by [\mathcal{A}^+_{(q)}]\Vert^2 \geq \beta\Vert \by[\mathcal{A}^1]\Vert^2. 
  \end{align}
  where the second inequality follows from the fact that $\bigcup_{q\in\mathbb{I}_{[1,\overline{q}]}}\mathcal{A}^+_{(q)} = \mathcal{A}^1$, which follows from Assumption \ref{ass:nzrow}. Inequality \eqref{eqn:block-licq-1} implies the desired result.
\end{proof}

We now summarize the developments in Section \ref{sec:block-0}, \ref{sec:block} in the following theorem.
\begin{theorem}\label{thm:summary-2}
  Under Assumption \ref{ass:uniform-degree}, \ref{ass:uniform-2nd}, \ref{ass:blkdiag}, \ref{ass:nzrow}, \ref{ass:block-SSOSC}, \ref{ass:block-licq}, we have
  \begin{align*}
    \oosigma_{\bH}&\leq CD^2;\quad
                    \oosigma_{\bR}\leq CD^2;\quad
                                    \uusigma_{\bH}\geq\left(\frac{2}{\gamma} +  \frac{8\overline{\mu} C^2D^4}{\gamma^3\beta} +\frac{4CD^2}{\gamma^2\beta}\right)^{-1}\left(1+\overline{\mu} CD^2\right)^{-1},
  \end{align*}
  where $\overline{\mu}$ is defined in Lemma \ref{lem:uniform-2}, and $\oosigma_{\bH},\oosigma_{\bR},\uusigma_{\bH}$ are defined in Theorem \ref{thm:main}.
\end{theorem}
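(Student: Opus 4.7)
The plan is to prove Theorem \ref{thm:summary-2} by assembling the composable sufficient conditions already established in Sections \ref{sec:block-0} and \ref{sec:block}, and then invoking Theorem \ref{thm:summary-1} with an explicit choice of the uniform parameters $(L,\gamma,\beta)$. There is no new analytical content required; the theorem is essentially a bookkeeping statement that substitutes the block-wise bounds into the generic formulas of Theorem \ref{thm:summary-1}.

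First, I would derive uBLH (Assumption \ref{ass:ublh}) from Assumptions \ref{ass:uniform-degree} and \ref{ass:uniform-2nd} by invoking Lemma \ref{lem:uniform-1}; this yields $\|\bH\|,\|\bR\|\leq CD^2$, so Assumption \ref{ass:ublh} holds with $L=CD^2$. Next, I would derive uSSOSC (Assumption \ref{ass:ussosc}) from Assumptions \ref{ass:blkdiag} and \ref{ass:block-SSOSC} by invoking Lemma \ref{lem:bssosc}; this gives $ReH(\bQ,\bA^0)\succeq \gamma\bI$ with the same $\gamma$ as in Assumption \ref{ass:block-SSOSC}. Finally, I would derive uLICQ (Assumption \ref{ass:ulicq}) from Assumptions \ref{ass:nzrow} and \ref{ass:block-licq} by invoking Lemma \ref{lem:blicq}; this gives $\bA^1(\bA^1)^\top\succeq \beta\bI$ with the same $\beta$ as in Assumption \ref{ass:block-licq}. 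At this point, all three hypotheses of Theorem \ref{thm:summary-1} are satisfied with the uniform parameters $(L,\gamma,\beta) = (CD^2,\gamma,\beta)$.

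I would then apply Theorem \ref{thm:summary-1}. The first two bounds $\oosigma_{\bH}\leq L = CD^2$ and $\oosigma_{\bR}\leq L = CD^2$ follow directly. For the lower bound on $\uusigma_{\bH}$, I would substitute $L \leftarrow CD^2$ into the expression $\left(\tfrac{2}{\gamma} + \tfrac{8\overline{\mu} L^2}{\gamma^3\beta} + \tfrac{4L}{\gamma^2\beta}\right)^{-1}\left(1+\overline{\mu} L\right)^{-1}$ from Theorem \ref{thm:summary-1}, which yields exactly the stated bound. Note that $\overline{\mu}$ is defined in Lemma \ref{lem:uniform-2} in terms of $L$, $\gamma$, $\beta$, so after substitution it becomes $\overline{\mu} = (2C^2D^4/\gamma + \gamma + CD^2)/\beta$, consistent with the statement of the theorem (which leaves $\overline{\mu}$ in its abstract form).

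Since every step is a direct invocation of a preceding lemma or theorem, there is no real obstacle; the only thing to be careful about is verifying that the constants propagate consistently, in particular that Lemma \ref{lem:uniform-1} produces exactly the $L=CD^2$ required to match the form of the $\uusigma_{\bH}$ bound, and that the $\gamma,\beta$ in the block assumptions are the same as those used in Lemma \ref{lem:uniform-2}. The proof should therefore be extremely short, essentially stating: ``The result follows directly from Lemma \ref{lem:uniform-1}, Lemma \ref{lem:bssosc}, Lemma \ref{lem:blicq}, and Theorem \ref{thm:summary-1}.''
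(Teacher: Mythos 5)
Your proposal is correct and follows exactly the paper's own proof, which states that the result follows from Theorem \ref{thm:summary-1} together with Lemma \ref{lem:uniform-1}, Lemma \ref{lem:bssosc}, and Lemma \ref{lem:blicq}; your explicit tracking of the substitution $L=CD^2$ into the $\uusigma_{\bH}$ bound is just a more detailed spelling-out of the same bookkeeping.
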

\begin{proof}
  The result follows from Theorem \ref{thm:summary-1} and Lemma \ref{lem:uniform-1}, \ref{lem:bssosc}, \ref{lem:blicq}.
\end{proof}

The results in Section \ref{sec:uniform-sub}-\ref{sec:block} are useful for different problems of interest but might not be applicable to certain problem classes. For instance, it is difficult to derive uniform regularity conditions for multi-stage stochastic programs with a fixed number of children per node because the probability of a given stage decays asymptotically over time (this prevents Assumption \ref{ass:block-SSOSC} to hold). Also, we have not discussed how the sensitivity behavior changes when the discretization resolution changes; such behavior can be used to understand the sensitivity behavior of the continuous-time (infinite-dimensional) optimization problems studied in \cite{grune2019sensitivity,grune2020exponential,grune2020abstract}. We will leave specialized treatment for those problems as a topic of future work.

\subsection{Discussion}
We now illustrate the practical applicability of our results using simple examples. With the intuition obtained from the examples, we shall discuss qualitative conditions under which the problem is likely to exhibit EDS.

\begin{example}\label{eg:pde}
  We first consider a PDE optimization problem for a steady-state thin-plate system described in \cite{mathworks2020nonlinear}:
  \begin{subequations}\label{eqn:pdeo}\small
    \begin{align}
      \min_{s,u}\;& \int_{w\in \Omega_{(k)}}\frac{1}{2}a(s(w)-s_{\text{ref}}(w))^2 + \frac{1}{2}u(w)^2 dw\label{eqn:pdeo-obj}\\
      \st\;& \Delta s(w) = \frac{2h_c}{\kappa t_z} (s(w)-\overline{T})+ \frac{2\epsilon\sigma}{\kappa t_z} (s(w)^4-\overline{T}^4)- \frac{1}{\kappa t_z}(bu(w) + d(w)),\; w\in \Omega_{(k)} \label{eqn:pdeo-pde}\\
                                &\nabla{s}(w)\cdot\hat{\bn}(w) = 0,\; w\in \partial\Omega_{(k)},\label{eqn:pdeo-bc}
    \end{align}
  \end{subequations}
  where $\Omega_{(k)}=[0,k]\times[0,k]\subseteq\mathbb{R}^2$ is the 2-dimensional domain of interest; $\partial\Omega_{(k)}$ is the boundary of $\Omega_{(k)}$; $s:\Omega_{(k)}\rightarrow \mathbb{R}$ is the temperature; $u:\Omega_{(k)}\rightarrow \mathbb{R}$ is the control; $d:\Omega_{(k)}\rightarrow \mathbb{R}$ is the disturbance; $\Delta$ is the Laplacian operator; $\hat{\bn}$ is the unit normal vector; $\cdot$ is the inner product; \eqref{eqn:pdeo-pde} is the heat equation whose right-hand-side consists of convection, radiation, and forcing terms by control and disturbance; \eqref{eqn:pdeo-bc} is the Neumann boundary condition (i.e., insulated); $s_{\text{ref}}:\Omega_{(k)}\rightarrow\mathbb{R}$ is the desired temperature; $\kappa=400$, $t_z=.01$, $h_c=1$, $\epsilon=.5$, $\sigma=5.67\times 10^{-8}$, and $\overline{T}=300$ are the constant parameters. We define the variable vector to be $x(w)=[s(w);u(w)]$; the data vector to be $p(w):=[s_{\text{ref}}(w);d(w)]$ for $w\in \Omega_{(k)}$. We consider a discretized version of \eqref{eqn:pdeo} (e.g., each $[i,j]\times[i+1,j+1]$ cell for $i,j=0,\cdots,k-1$); we consider the discretization mesh as the graph $\cG$ and let $K=\mathbb{I}_{>0}$. Observe that the domain expands with $k$ but the fineness of the discretization mesh remains the same. 
\end{example}

We assume that $s_{\text{ref}}(w)=d(w)=\overline{T}$ for $w\in \Omega$; in this case, the problem admits a trivial solution $s^\star(w)=\overline{T}$ and $u^\star(w)=0$. Since there are no inequality constraints, $\bA^0_{(k)}=\bA^1_{(k)}=\bA_{(k)}$. Furthermore, there exist permutations of the Lagrangian Hessian $\bQ_{(k)}$ and of constraint Jacobian $\bA_{(k)}$ that can be expressed as:
\begin{align*}
  \widetilde{\bQ}_{(k)}=\Pi_{(k)}\bQ_{(k)}\Pi_{(k)}&=\begin{bmatrix}
    \bI\\
    &a\bI
  \end{bmatrix};\quad
      \widetilde{\bA}_{(k)}:=
      \bA_{(k)}\Pi_{(k)}=
      \begin{bmatrix}
        \bL_{(k)}+c_0 \bI &(b/b_0)\bI
      \end{bmatrix},
\end{align*}
where $\Pi_{(k)}$ is the permutation operator, $\bL_{(k)}$ is the Laplacian matrix induced by the mesh graph, $c_0=(2h_c+8\epsilon\sigma\overline{T}^3)/\kappa t_z$ and $b_0=-1/\kappa t_z$. It is easy to see that Assumptions \ref{ass:uniform-degree}, \ref{ass:uniform-2nd} hold; thus, upper bounds in \eqref{eqn:uniform-bound} are satisfied. Also, one can observe that the uniform LICQ condition holds if $b\neq 0$ from $\widetilde{\bA}_{(k)}(\widetilde{\bA}_{(k)})^\top = (\bL_{(k)}+c_0 \bI)^2+(b/b_0)^2\bI\succeq (b/b_0)^2\bI$. Furthermore, since the smallest eigenvalue of $Z^\top\widetilde{\bQ}_{(k)}Z$ with orthogonal $Z$ is always greater than or equal to the smallest eigenvalue of $\widetilde{\bQ}_{(k)}$, we have that $ReH(\widetilde{\bQ}_{(k)},\widetilde{\bA}_{(k)})\succeq aI$. Thus, if $a>0$, uniform SSOSC holds. This example demonstrates that the results in Section \ref{sec:uniform-sub} can be used to check uniform boundedness. 

\begin{example}\label{eg:ocp}
  Consider a dynamic optimization problem for energy storage:
    \begin{subequations}\label{eqn:ocp}
    \begin{align}
      \min_{\{s_i,u_i,v_i\}_{i=1}^k}\;& \sum_{i=1}^k \frac{1}{2}a(s_i)^2 + \frac{1}{2}(u_i)^2 + \pi_i v_i \\
      \st\;& s_1 = \overline{s},\;(\mu_1)\\
      &s_{i} = s_{i-1}+bu_{i-1}+ w_i,\;i\in\mathbb{I}_{[2,k]},\;(\mu_i)\\
      &v_{i} = u_{i}+d_i,\;i\in\mathbb{I}_{[1,k]},\;(\nu_i).
    \end{align}
  \end{subequations}
  Here, $s_i\in\mathbb{R}$ is the stored energy (state) at time $i$; $u_i\in\mathbb{R}$ is the charge/discharge of energy (control); $v_i\in\mathbb{R}$ are the transactions with the grid; $\overline{s}=w_1$ is the initial storage; $\pi_i\in\mathbb{R}$ is the energy price forecast; $d_i$ is the energy demand forecast; and $w_i$ is the disturbance forecast. We define $x_i:=[s_i,u_i,v_i]$ as the primal variables; $y_i:=[\mu_i,\nu_i]$ as the dual variables; $p_i=[\pi_i;w_i;d_i]$ as the data. We consider  $\cG_{(k)}=(\cV_{(k)},\cE_{(k)})$ as a linear graph that represents time domain, $\cV_{(k)}:=\mathbb{I}_{[1,k]}$ and $K=\mathbb{I}_{>0}$. The index $k$ is the length of the horizon. 
\end{example}

Given any $k\in K$, \eqref{eqn:ocp} satisfies SSOSC and LICQ, regardless of the choice of $(a,b)$, but \eqref{eqn:uniform-bound} can be violated. For example, if $b=0$, $w_1=1$, and $w_2=w_3=\cdots=0$, one can see that $s_i=1$ for $i\in\mathbb{I}_{[1,k]}$; thus, $\usigma(\bH_{(k)})\rightarrow 0$. On the other hand, if $a>0$ and $b\neq 0$, we will see below that $\usigma(\bH_{(k)})$ can be uniformly bounded below by a strictly positive number. It is easy to see that Assumptions \ref{ass:uniform-degree}, \ref{ass:uniform-2nd} hold; thus, the upper bounds in \eqref{eqn:uniform-bound} can be obtained. Since there is no inequalities, $\bA^0=\bA^1=\bA$.

It is clear that Assumptions \ref{ass:uniform-degree}, \ref{ass:uniform-2nd} hold for Example \ref{eg:ocp}, but it is difficult to check whether Assumptions \ref{ass:ussosc}, \ref{ass:ulicq} (uniform SSOSC and LICQ) hold. Specifically, we need to show that there exist some uniform constants $\gamma,\beta>0$ such that $ReH(\bQ_{(k)},\bA_{(k)})\succeq \gamma \bI$ and $\bA_{(k)}(\bA_{(k)})^\top \succeq \beta I$ hold for
\begin{align*}
  \bQ_{(k)}
  &=
    \begin{bmatrix}
      a\\
      &1\\
      &&0\\
      &&&\ddots\\
      &&&&a\\
      &&&&&1\\
      &&&&&&0\\
    \end{bmatrix}
  \bA_{(k)}
  =
  \begin{bmatrix}
    \begin{matrix}
      1\\
      &-1&1\\
      -1 &-b&&1\\
      &&&&-1&1\\
    \end{matrix}\\
    &\ddots\\
    &&\begin{matrix}
      -1 &-b&&1\\
      &&&&-1&1
    \end{matrix}
  \end{bmatrix}.
\end{align*}
  The difficulty arises from the indefinitely growing sizes of $\bQ_{(k)}$ and $\bA_{(k)}$; hence, we leverage the result in Section \ref{sec:block}. If we choose $J_{(k)(q)}=\mathbb{I}_{[3(q-1)+1,3q]}$ for $q\in \mathbb{I}_{[1,k]}$, 
\begin{align}\label{eqn:ocp-2}
  \begin{array}{ll}
    \bQ_{(k)(q)} =  
    \begin{bmatrix}
      a\\
      &1\\
      &&0
    \end{bmatrix};
    &
    \bQ_{(k)(-q)} =
    \bzero;
    \\
    \bA^{-}_{(k)(q)} =
    \begin{cases}
      \begin{bmatrix}
        1\\
        0&-1&1
      \end{bmatrix},&q=1
      \\
      \begin{bmatrix}
        0&-1&1      
      \end{bmatrix},
      &q\in\mathbb{I}_{[2,k]}
    \end{cases};
    &
    \bA^{+}_{(k)(q)} =
    \begin{cases}
      \begin{bmatrix}
        1&&\\
        &-1&1\\
        -1&-b
      \end{bmatrix} & q\in\mathbb{I}_{[1,k-1]}
      \\    
      \begin{bmatrix}
        1&&\\
        &-1&1
      \end{bmatrix}  & q= k
    \end{cases}.
  \end{array}
\end{align}
One can confirm from \eqref{eqn:ocp-2} that Assumptions \ref{ass:block-SSOSC}, \ref{ass:block-licq} are satisfied if $a>0$ and $b\neq 0$ hold. Thus, Example \ref{eg:ocp} with $a>0$ and $b\neq 0$ satisfies \eqref{eqn:uniform-bound}; accordingly, $(\Upsilon,\rho)$ are uniformly bounded. This demonstrates the usefulness of the results in Section \ref{sec:block} in checking the uniform boundedness conditions.

The above results also provide {\em qualitative} conditions under which quantities in \eqref{eqn:uniform-bound} are likely to be moderately bounded. The first is having {\em sufficient positive curvature} in the objective function (related to Assumption \ref{ass:ussosc}, \ref{ass:block-SSOSC}), and the second is having a {\em sufficient flexibility} in the constraints (related to Assumption \ref{ass:ulicq}, \ref{ass:block-licq}). Indeed, in the absence of nonlinear constraints, the SSOSC implies the strong convexity of the objective function on the null space. In addition, in many practical domains, flexibility is defined as the ability to endure and adjust to the variations in conditions \cite{grossmann1983operability,swaney1985index,nosair2015flexibility,cochran2014flexibility}. In the context of sensitivity analysis, this can be interpreted as the ability to remain feasible without changing the solution too aggressively when the system is subject to data perturbations. The justification is that the big jump in the solution may force the system to violate the constraints. Thus, this notion of flexibility is related to the smallest nonzero singular value of the active constraint Jacobian. Intuitively, the first qualitative condition helps the decay of sensitivity because positive curvature produces a direction to which the solution tends and the second qualitative condition helps the decay of sensitivity as it enables the solutions to dampen the impact of perturbations. These conditions can be related to specific properties of particular problem classes; for example, for the dynamic optimization problems analyzed in \cite{Na2019Exponential}, it can be seen that uniform LICQ is related to uniform controllability; similarly, the observability is directly related to SSOSC for state and parameter estimation problems \cite{zavala2010stability}. Establishing such connections for specific problem instances is an interesting topic of future work.

\section{Numerical Studies}\label{sec:num}
In this section, we illustrate the theoretical developments with different classes of graph-structured problems. We conduct case studies for four different classes of graph-structured optimization problems: dynamic optimization, stochastic optimization, PDE-constrained optimization, and network optimization. We are particularly interested in exploring the effect of the margin of uSSOSC (positive objective curvature) and uLICQ (flexibility) on the exponential decay of sensitivity. In what follows, we describe the particular problem instances under study. Throughout the instances, $a$ and $b$ are parameters that control the margin of uSSOSC and uLICQ, respectively, and $j$ represents the node where the data perturbation is introduced.

\subsection{Dynamic Optimization}
The study is performed for Example \ref{eg:ocp}. We set $w_i=d_i=\pi_i=0$ for $i\in \cV_{(k)}$, $k=9$, and $j=5$. Numerical sensitivity study with more sophisticated dynamic optimization problems (e.g., with nonlinearities) are provided in \cite{shin2019parallel,na2020overlapping}. 

\subsection{Stochastic Optimization}
We consider a stochastic program:
\begin{subequations}\label{eqn:sp}
  \begin{align}
    \min_{\{s_i,u_i,v_i\in\mathbb{R}\}_{i\in \cV}} \;&\sum_{i\in \cV} \pi_i\cdot(a \frac{1}{2}s_i^2 + \frac{1}{2}u_i^2+\chi_i v_i)\\
    \st\;& s_1 = \overline{s},\;(\mu_1)\\
    &s_i = s_{an(i)}+bu_{an(i)}+w_i,\;i\in \cV\setminus\{1\},\;(\mu_i),\\
    &v_i = u_i + d_i,\;i\in \cV,\;(\nu_i).
  \end{align}
\end{subequations}
Here $\cG=(\cV,\cE)$ represents the scenario tree (depicted in Figure \ref{fig:graphs}); $1\in \cV$ is the root node; $an(i)\in \cN_\cG[i]$ denotes the parent node; $\pi_i\in\mathbb{R}_{\geq 0}$ denotes the probability of node $i$; $s_i\in\mathbb{R}$ is the stored energy at node $i$; $u_i\in\mathbb{R}$ is the charge/discharge of energy at node $i$; $v_i\in\mathbb{R}$ is the transactions with grid at node $i$; $\overline{s}=w_1$ is the initial energy storage; $\chi_i\in\mathbb{R}$ is the forecast energy price at node $i$; $d_i$ is the energy demand forecast at node $i$; $w_i$ is the disturbance forecast at node $i$. We set $x_i:=[s_i,u_i,v_i]$ as the primal variable vector at node $i$; $y_i:=[\mu_i,\nu_i]$ as the dual variable vector at node $i$; $p_i=[\chi_i;w_i;d_i]$ as the data at node $i$. Furthermore, we set $|c(i)|=3$, where $c(i)\subseteq \cN_\cG[i]$ is the set of children nodes, $w_1=0$; $\{w_j\}_{j\in c(i)}=[-1;0;1]$; $d_i=\chi_i=0$ for $i\in \cV$; $k=6$, where $k$ denotes the number of stages. We choose $j=3$ (a node in the second stage).

\subsection{PDE Optimization}
The study is performed for Example \ref{eg:pde} with $k=9$ ($9\times 9$ grid). We choose $j$ to be the node at the center $(5,5)$. For the base data, we set $s_{\text{ref}}(w)=d(w)=\overline{T}$ for $w\in \Omega$. 

\subsection{Network Optimization}
We consider the alternating current (AC) optimal power flow problem:
\begin{subequations}\label{eqn:opf}\small
  \begin{align}
    \min_{\substack{\{v_i\in\mathbb{C}\}_{i\in \cV}\\
        \{s^g_{k}\in\mathbb{C}\}_{k\in \mathcal{W}}\\
        \{s_{ij}\in\mathbb{C}\}_{i,j\in \cV}}}\;&
    a \Big(\sum_{i\in \cV}(|v_i|-v_\text{ref})^2 +\sum_{\{i,j\}\in \cE} (\angle v_iv_j^*)^2\Big)+
    \sum_{k\in \mathcal{W}}c^1_{(k)} \Re(s^g_{(k)}) + c^2_{(k)} \Re(s^g_{(k)})^2 \label{eqn:opf-obj}\\
    \st\;& \angle v_i = 0,\; i\in \cV_{\text{ref}}\\
    &s^{gL}_{(k)}-b(1+\sqrt{-1})\leq s^g_{(k)}\leq s^{gU}_{(k)}+b(1+\sqrt{-1}),\;k\in \mathcal{W}\label{eqn:opf-con-gen}\\
    &\sum_{k\in \mathcal{W}_i}s^g_{k} -s^d_{i} = \sum_{j\in \cN_\cG[i]} s_{ij},\;v^L_i\leq |v_i|\leq v^U_i,\; i\in \cV\\
    &s_{ij} = (Y_{ij}+Y_{ij}^c)^* \frac{|v_i|^2}{|T_{ij}|} v_i^*-Y_{ij}\frac{v_iv_j^*}{T_{ij}} ,\;|s_{ij}| \leq s^U_{ij},\; i,j\in \cV\\
    &\theta^{\Delta L}_{ij} \leq \angle v_iv_j^*\leq \theta^{\Delta U}_{ij},\; \{i,j\}\in \cE.
  \end{align}
\end{subequations}
Here, $\cG=(\cV,\cE)$ represents the power network, $\mathbb{C}$ denotes the set of complex numbers; $\Re(\cdot)$ and $\Im(\cdot)$ denotes the real and imaginary part of the argument; $(\cdot)^*$ denotes the complex conjugate of the argument; $p\geq q \iff \Re(p)\geq \Re(q)$ and $\Im(p)\geq \Im(q)$ for $p,q\in\mathbb{C}$; $\mathcal{W}_i$ is the set of generators connected to node $i$; $\mathcal{W}:=\bigcup_{i\in \cV}\mathcal{W}_i$; $\cV_{\text{ref}}$ is the set of reference nodes; $v_i\in\mathbb{C}$ is the voltage at node $i$; $s^g_{(k)}\in\mathbb{C}$ is the power generation at generator $k$; $\{v^L_i,v^U_i,s^d_i\in\mathbb{C}\}_{i\in \cV}$ ,$\{\theta^{\Delta,L}_{ij},\theta^{\Delta,U}_{ij}\in\mathbb{R}\}_{\{i,j\}\in \cE}$, $\{s_{ij}^{U},Y_{ij},Y^c_{ij},T_{ij}\in\mathbb{C}\}_{i,j\in \cV}$, $\{c^1_{(k)},c^2_{(k)}\in\mathbb{R},s^{gL}_{(k)},s^{gU}_{(k)}\in\mathbb{C}\}_{k\in \mathcal{W}}$ are the data. The readers are pointed to the documentation of {\tt PowerModels.jl} \cite{coffrin2018powermodels} for the details of Problem \eqref{eqn:opf}. Here we modified the problem by adding the regularization term (the first term in \eqref{eqn:opf-obj}) and by introducing the additional terms in constraint \eqref{eqn:opf-con-gen} to examine the effect of positive curvature and flexibility in the constraints; the problem reduces to the original problem when $(a,b)=0$. We treat the edge variables, constraints, and the objective terms by treating them as node terms for one of the connected nodes (in particular, the one with lower node index), as explained in Remark \ref{rem:gen-1}; note that this manipulation only alters indexing and does not change the problem. We set $z_i$ as all the primal/dual variable that are associated with node $i\in \cV$ (including generator and edge variables/constraints), and we set $p_i=[\Re(s^d_i),\Im(s^d_i)]$. We choose $j=1$ as the perturbation location. We use test case {\tt pglib\_opf\_case500\_tamu} available at {\tt pglib-opf} v18.08 \cite{babaeinejadsarookolaee2019power,birchfield2016grid} (the problem data $c^1_{(k)}$, $c^2_{(k)}$, $V^L_i$, etc are available therein). The problem is modeled using modeling library {\tt PowerModels.jl}.

\subsection{Methods}
We conduct the following numerical study for each problem instance. We consider a problem $P(\bp^\star)$ with the base data $\bp^\star$. Then, we consider perturbed problems $\{P(\bp^{^{(q)}})\}_{q=1}^{\overline{q}}$ in which the data are perturbed as $\bp^{(q)}=\bp^\star+ \Delta\bp^{(q)}$, where $\Delta\bp^{(q)}$ are i.i.d samples drawn from $\Delta p_j \sim\mathcal{U}([-\sigma,\sigma]^{l_j})$, and $\Delta p_i=\bzero$ if $i\neq j$. Here, $j\in \cV$ is a selected perturbation point and $\mathcal{U}(\Omega)$ denotes the multivariate uniform distribution on $\Omega$. We choose $\sigma=10^{-3}$ and $\overline{q}=30$ for all instances. Then, the {\it empirical} sensitivity coefficients:
\begin{align*}
  \overline{C}_{ij} := \max_{q\in\mathbb{I}_{[1,\overline{q}]}} \Vert z^\ddag_i(\bp^{(q)})\Vert/\Vert \Delta \bp^{(q)}\Vert,\quad i,j\in \cV 
\end{align*}
are computed and visualized. The empirical sensitivity $\overline{C}_{ij}$ converges to $\Vert \nabla_{p_j} z^\dag_i(\bp^\star)\Vert$ as $\sigma\rightarrow 0$ and $\overline{q}\rightarrow\infty$; thus, these empirical sensitivities are suitable quantities for study of sensitivity coefficients. We recall that $(a,b)$ are the key parameters that control the positive curvature and flexibility. We vary these parameters as shown in Table \ref{tbl:etab}, and see how they affect the spread of the sensitivity coefficients. Here, Case $1$ has sufficiently large $(a,b)$; Case 2 has low $a$; Case 3 has low $b$; and Case 4 has low $(a,b)$. The results can be reproduced using the scripts provided in \url{https://github.com/zavalab/JuliaBox/tree/master/SensitivityNLP}.

\begin{table}\centering\small
  \caption{Variation of $(a,b)$ in numerical studies.}\label{tbl:etab}
  \begin{tabular}{|c|c|c|c|c|}
    \hline
    &Case 1& Case 2& Case 3& Case 4\\
    \hline
    Dynamic Optimization & $(1,1)$ & $(10^{-2},1)$ & $(1,10^{-2})$ & $(10^{-2},10^{-2})$\\
    \hline
    Stochastic Optimization & $(1,1)$ & $(10^{-2},1)$ & $(1,10^{-2})$ & $(10^{-2},10^{-2})$\\
    \hline
    PDE Optimization& $(1,1)$ & $(10^{-2},1)$ & $(1,10^{-2})$& $(10^{-2},10^{-2})$\\
    \hline
    Network Optimization &
    $(10^6,10)$ & $(0,10)$ & $(10^6,0)$& $(0,0)$\\
    \hline
  \end{tabular}  
\end{table}

\subsection{Results}
The sensitivity results are illustrated as heat maps of the empirical coefficients (Figure \ref{fig:heatmap}) and as scatter plots of the coefficients against distance $d_\cG(i,j)$ (Figure \ref{fig:scatter}). From Figure \ref{fig:heatmap} we see that, with sufficiently large $(a,b)$ (Case 1), the empirical sensitivity coefficients decay as they move away from the perturbation point. Furthermore, from Figure \ref{fig:scatter}, one can confirm that the sensitivity coefficients decay exponentially with distance. This verifies the theoretical results in Section \ref{sec:sens}. If either one or both of $(a,b)$ are not sufficiently large (Case 2, 3, 4), the decay of sensitivity is weaker or not observed (except for the PDE optimization problem). The reason that the PDE optimization problem exhibits decay of sensitivity even in the absence of positive curvature and flexibility is that the system itself has a strong dissipative property. From these results, we can confirm that it is sufficient for problems to have strong positive curvature and flexibility in the constraints to exhibit decay of sensitivity (this confirms the theoretical results in Section \ref{sec:uniform}). Notably, even though we cannot guarantee uniform boundedness of the multi-stage stochastic programs, we can observe EDS for sufficiently large $(a,b)$. 

\begin{figure}[!htb]\centering
    \begin{tikzpicture}
      \node at (0,0){
        \setlength{\tabcolsep}{0.25em}
      \begin{tabular}{cccc}
        Case 1& Case 2& Case 3& Case 4\\
        \includegraphics[width=.23\textwidth]{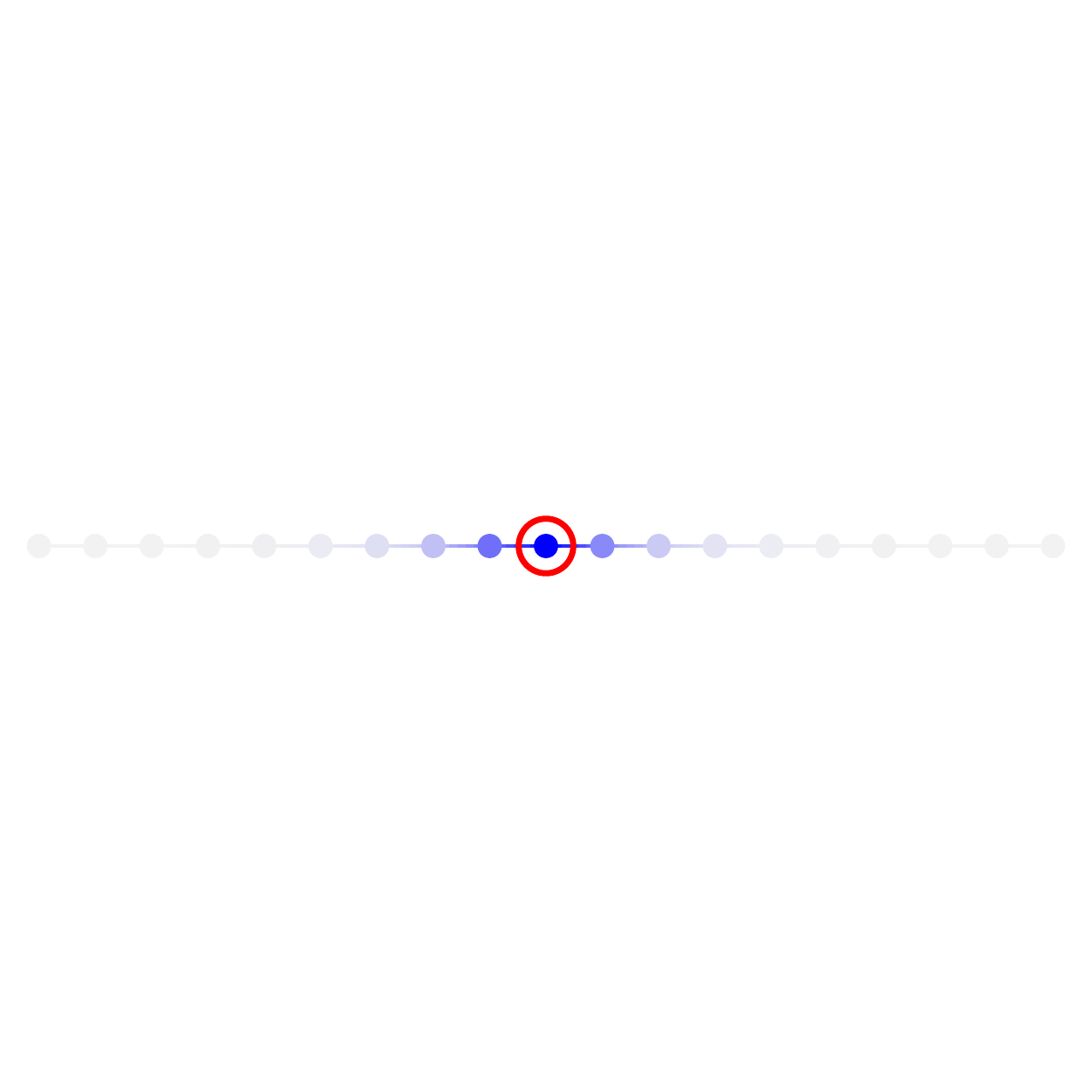}&
        \includegraphics[width=.23\textwidth]{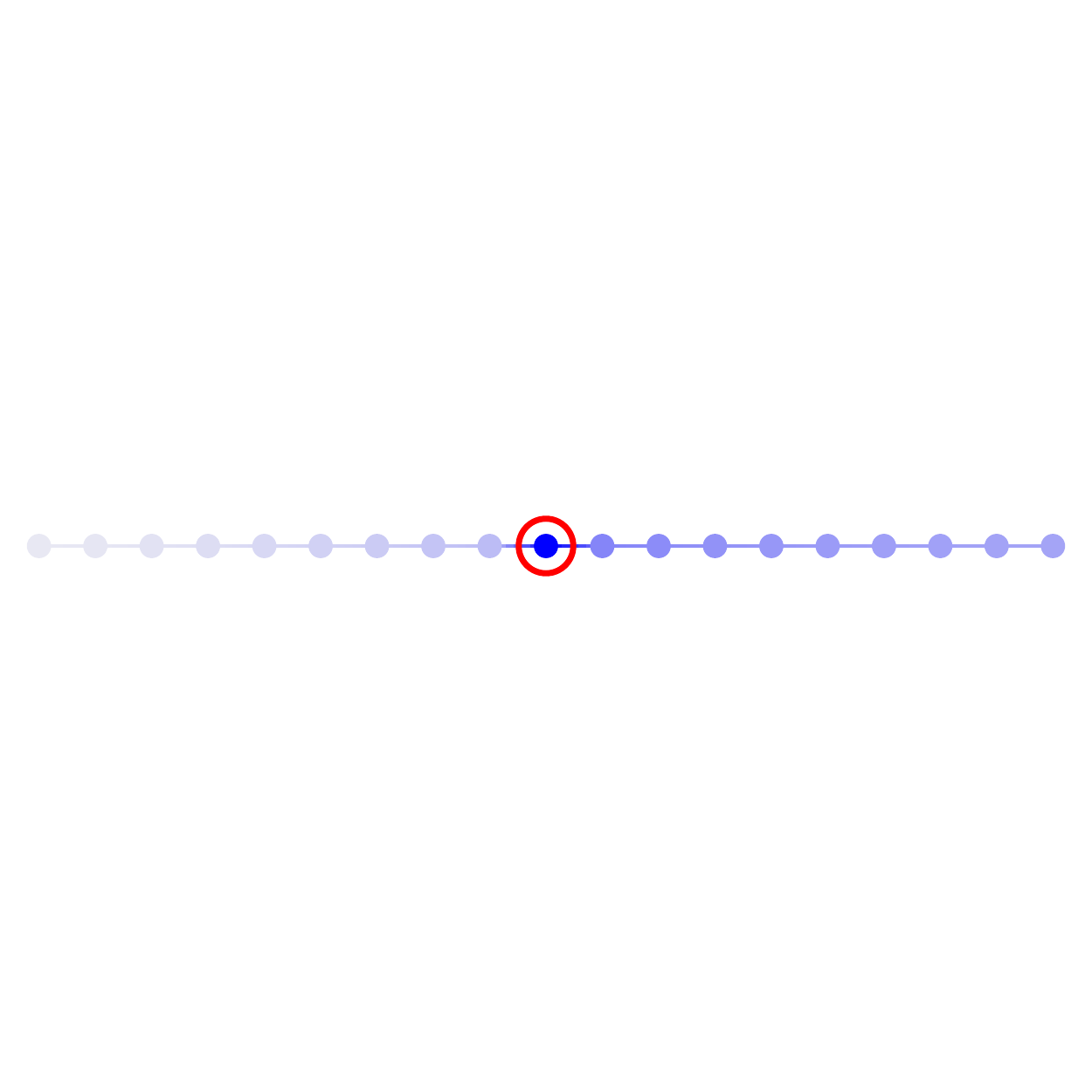}&
        \includegraphics[width=.23\textwidth]{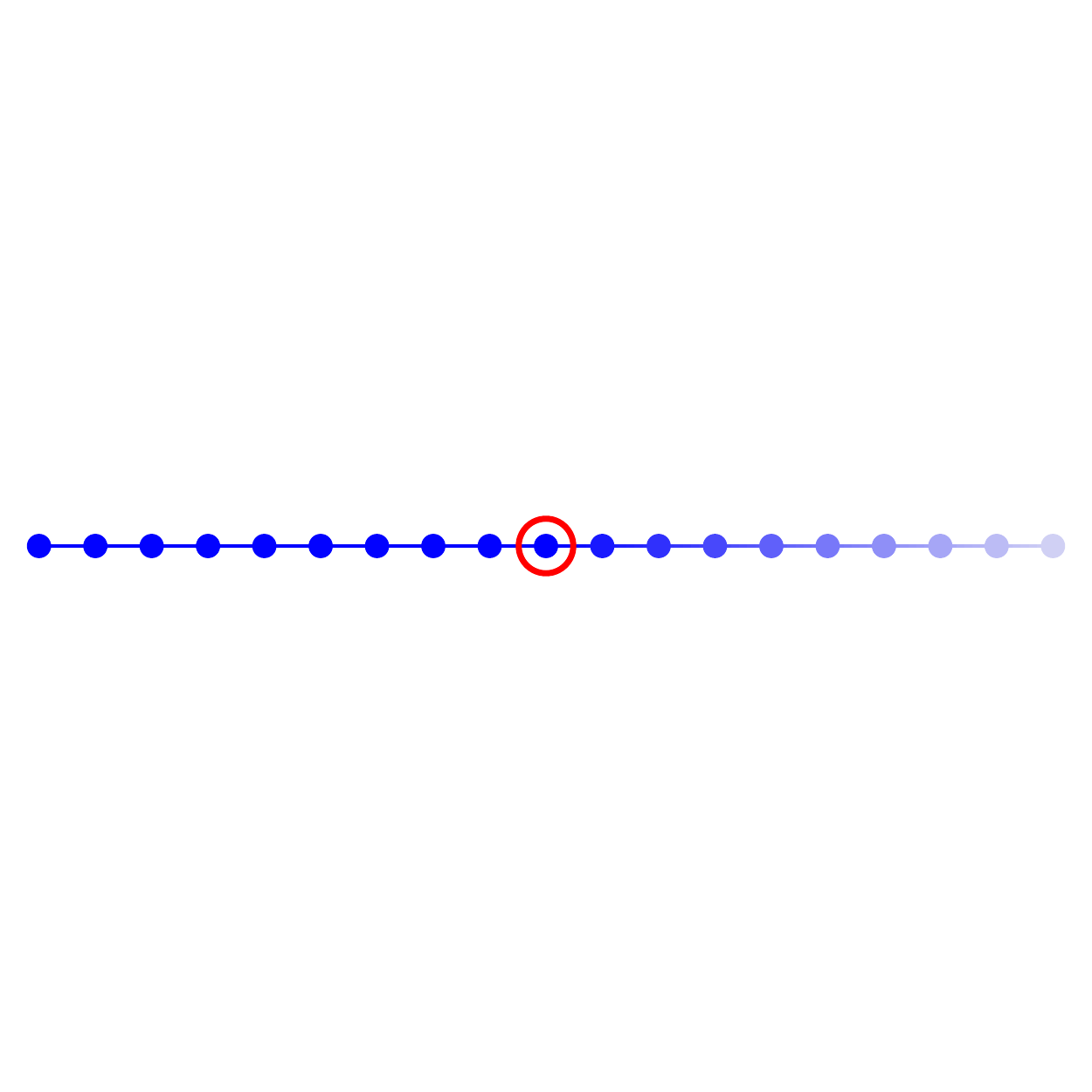}&
        \includegraphics[width=.23\textwidth]{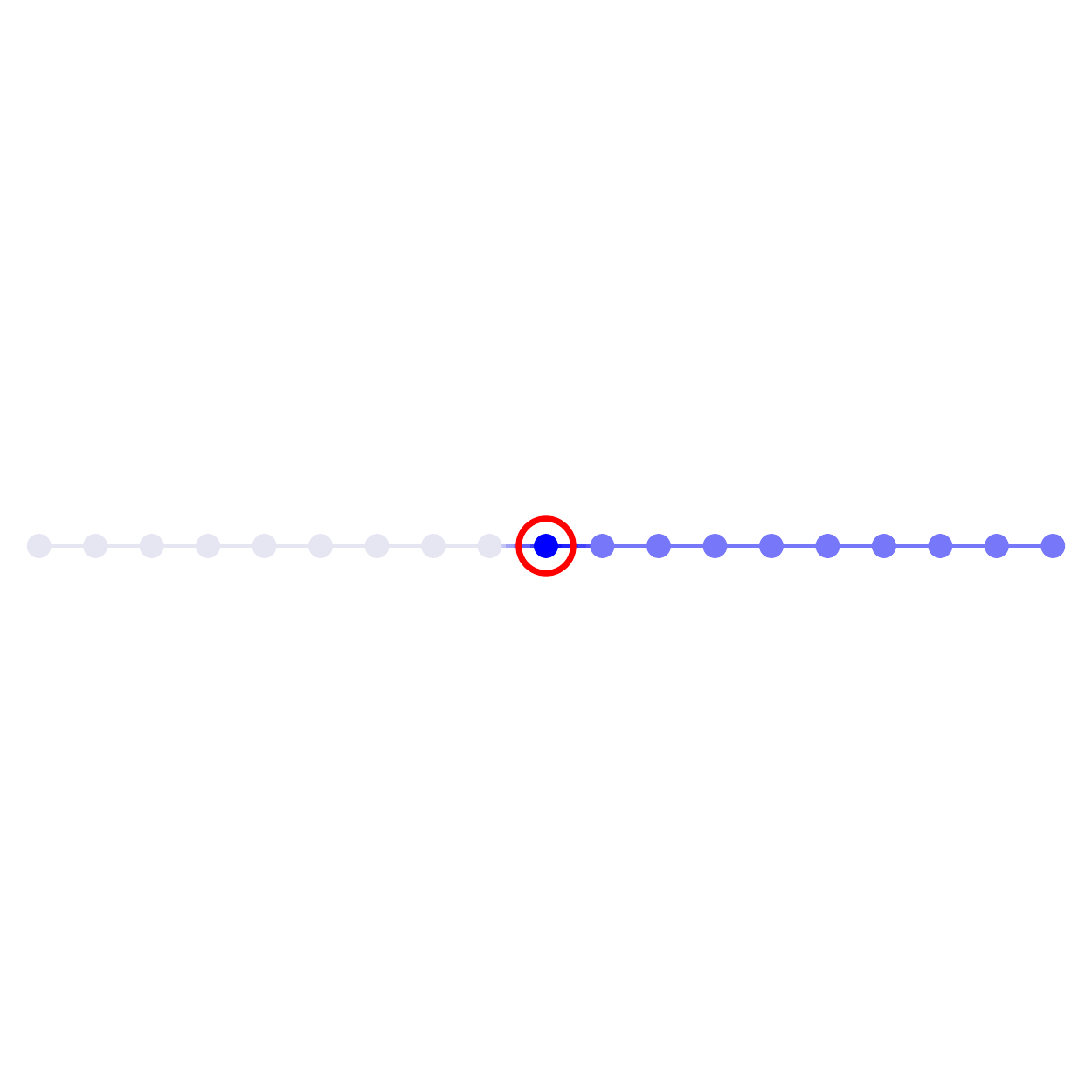}\\
        \includegraphics[width=.23\textwidth]{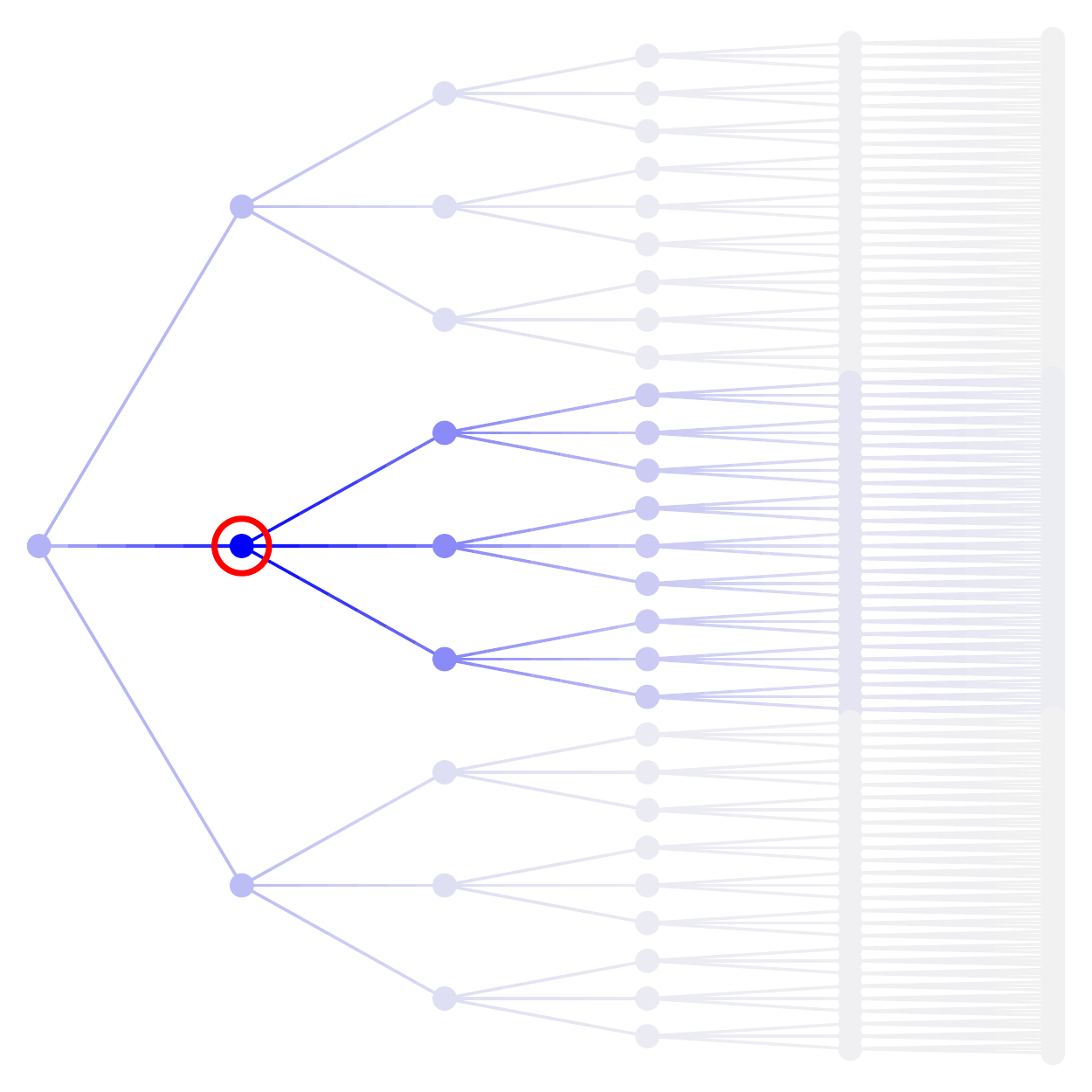}&
        \includegraphics[width=.23\textwidth]{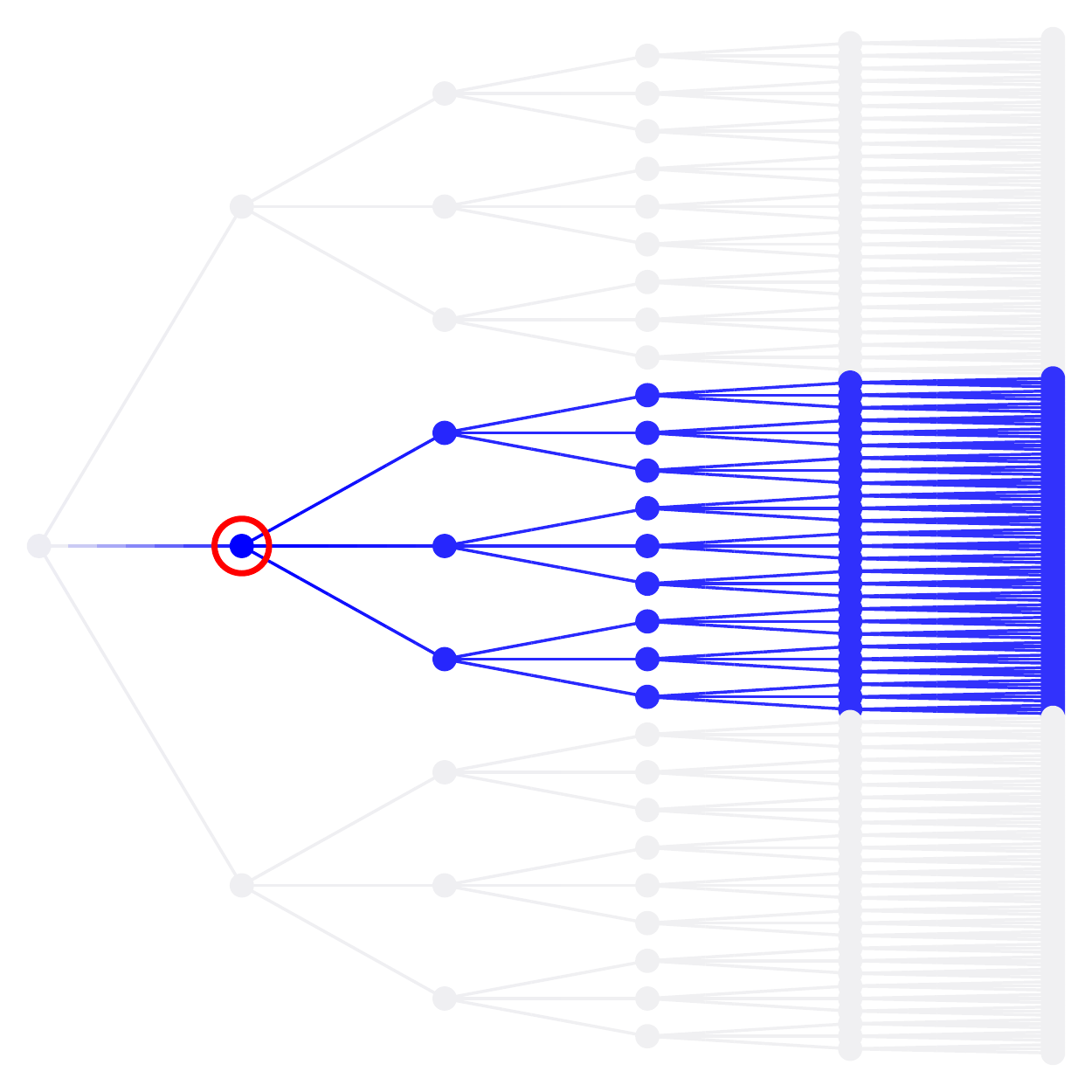}&
        \includegraphics[width=.23\textwidth]{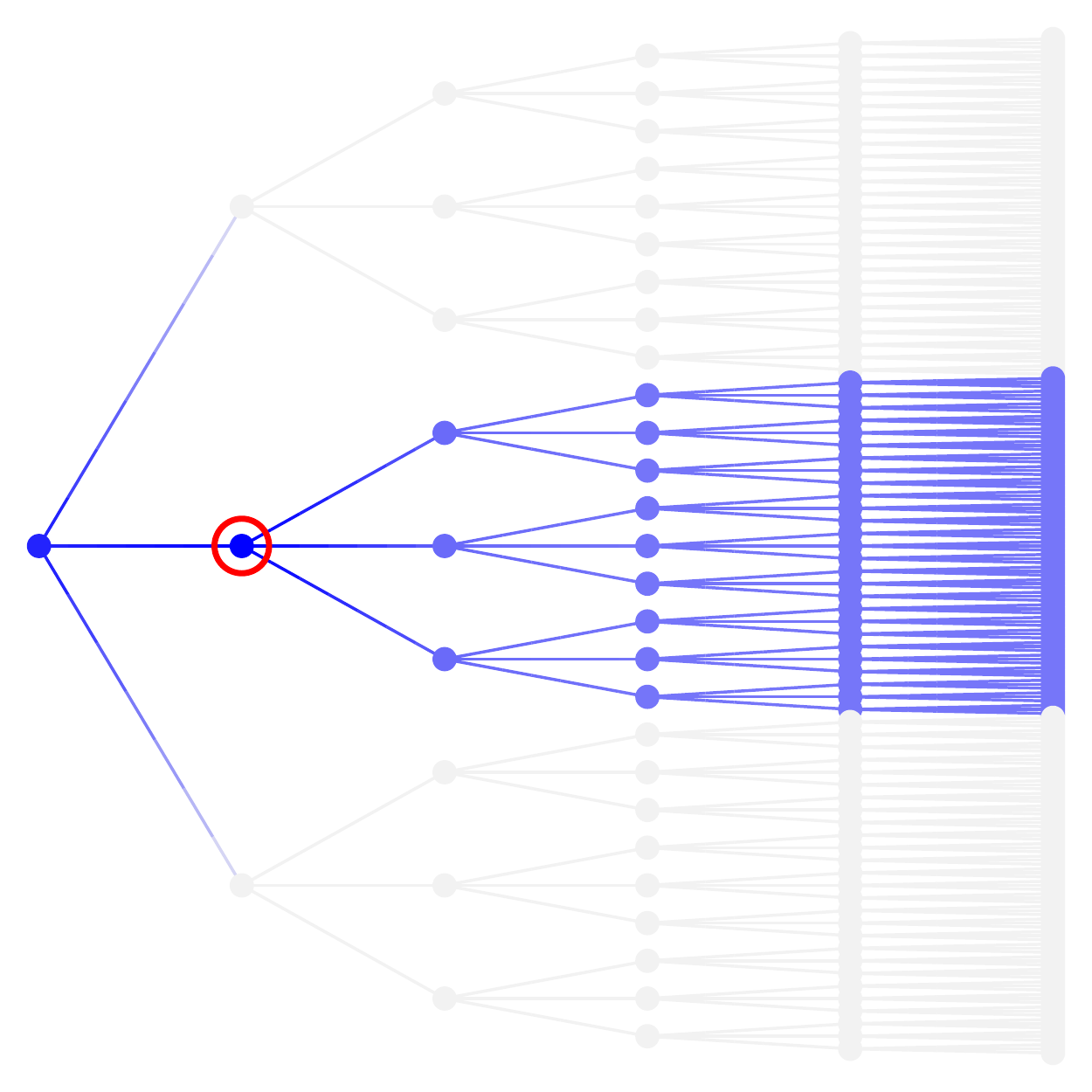}&
        \includegraphics[width=.23\textwidth]{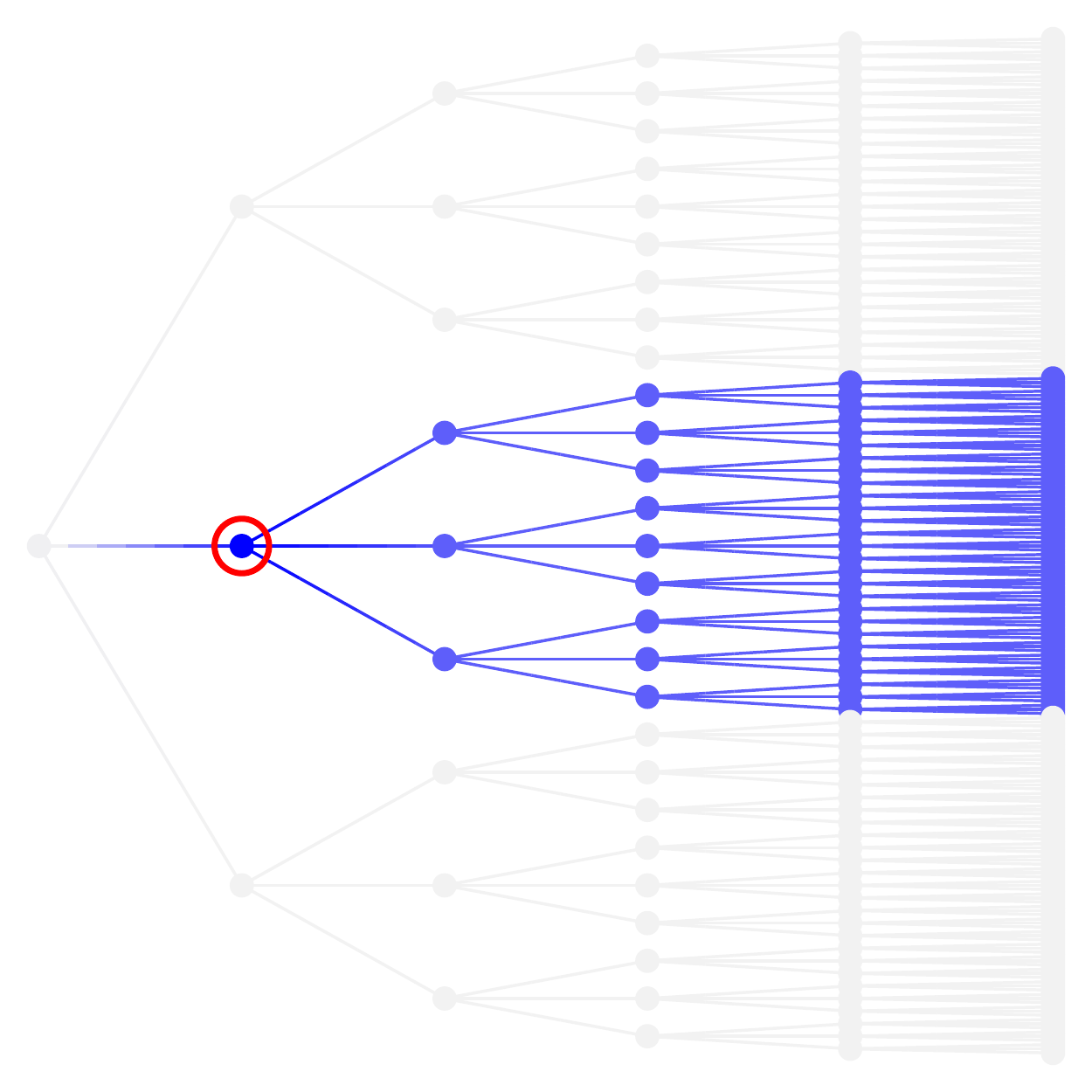}\\  
        \includegraphics[width=.23\textwidth]{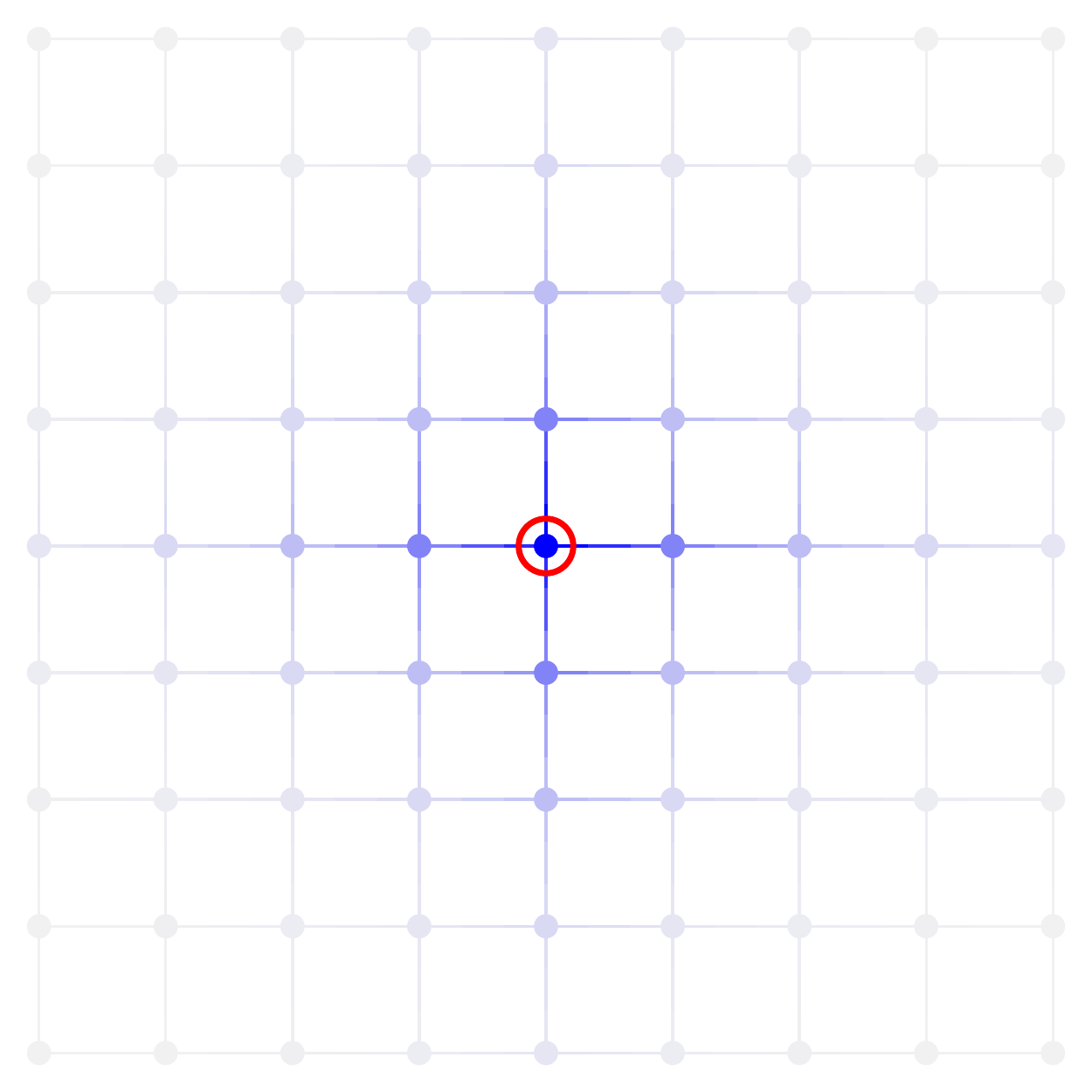}&
        \includegraphics[width=.23\textwidth]{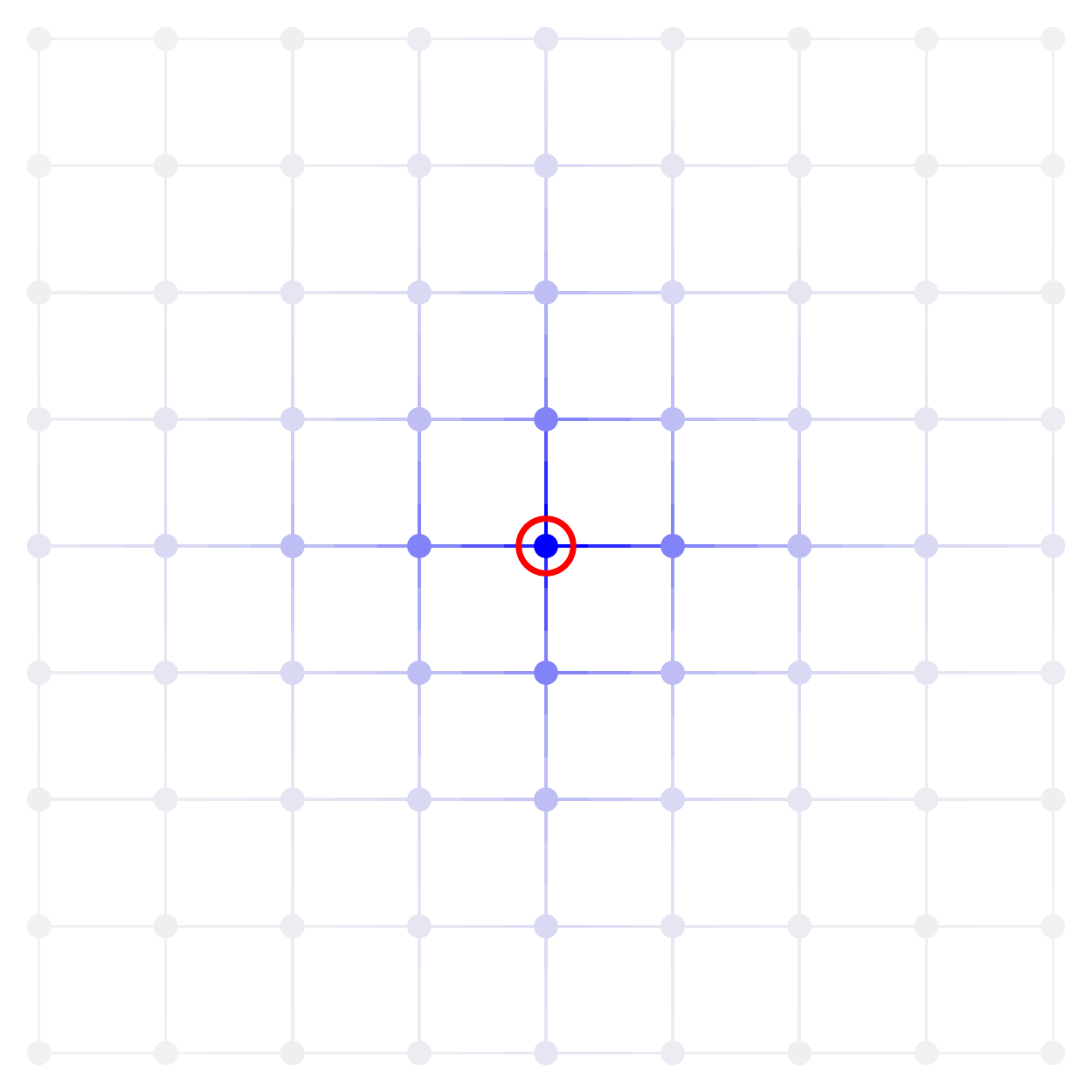}&
        \includegraphics[width=.23\textwidth]{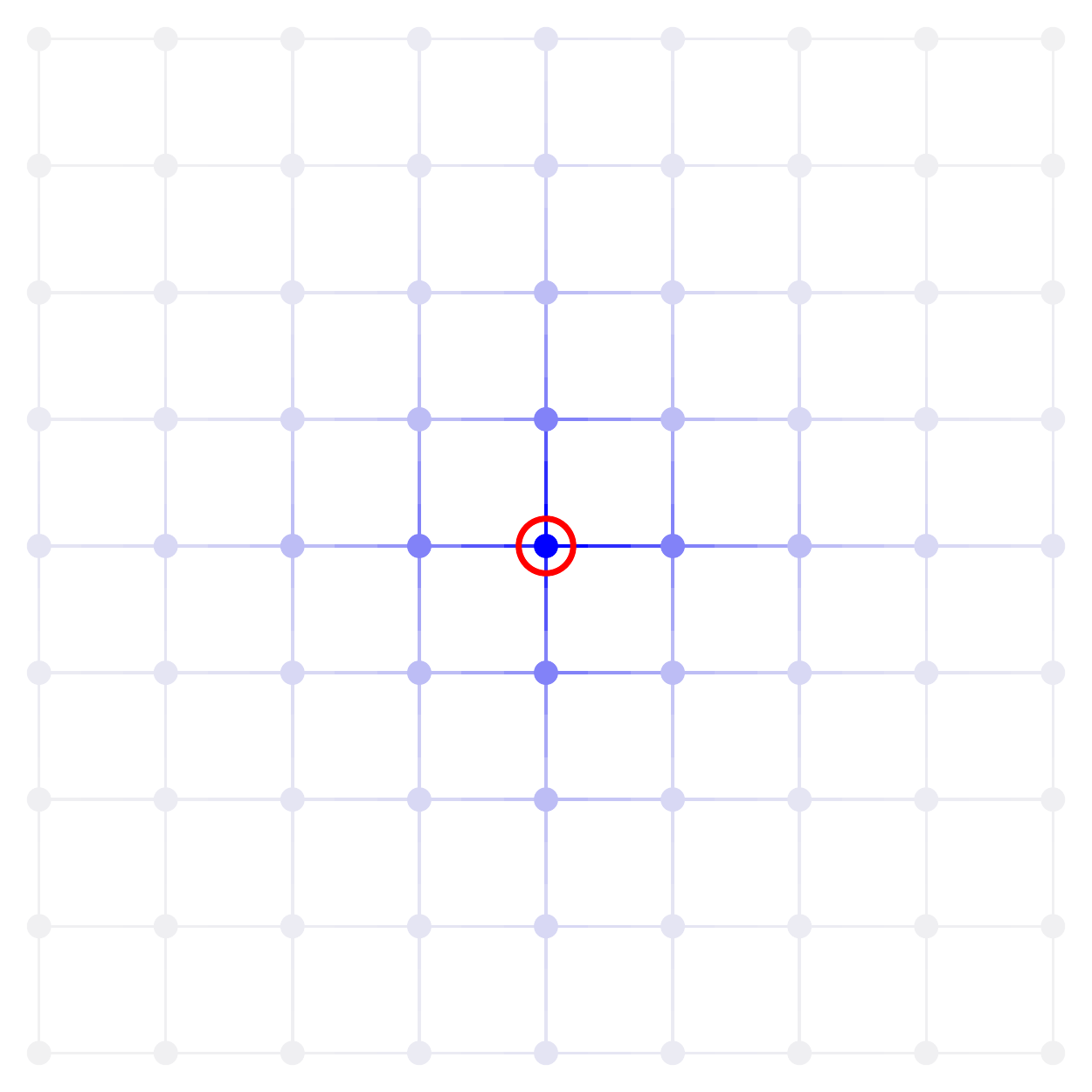}&
        \includegraphics[width=.23\textwidth]{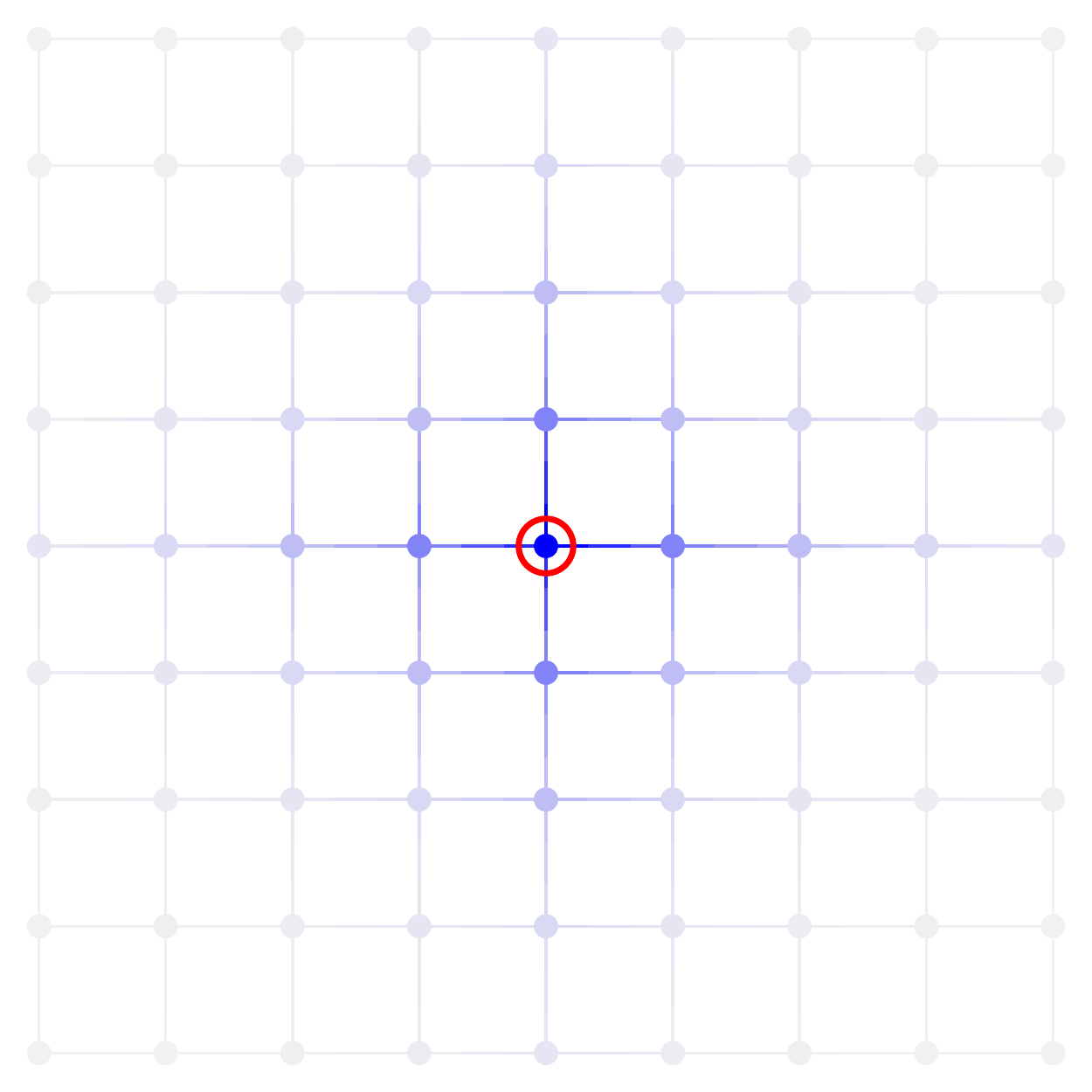}\\  
        \includegraphics[width=.23\textwidth]{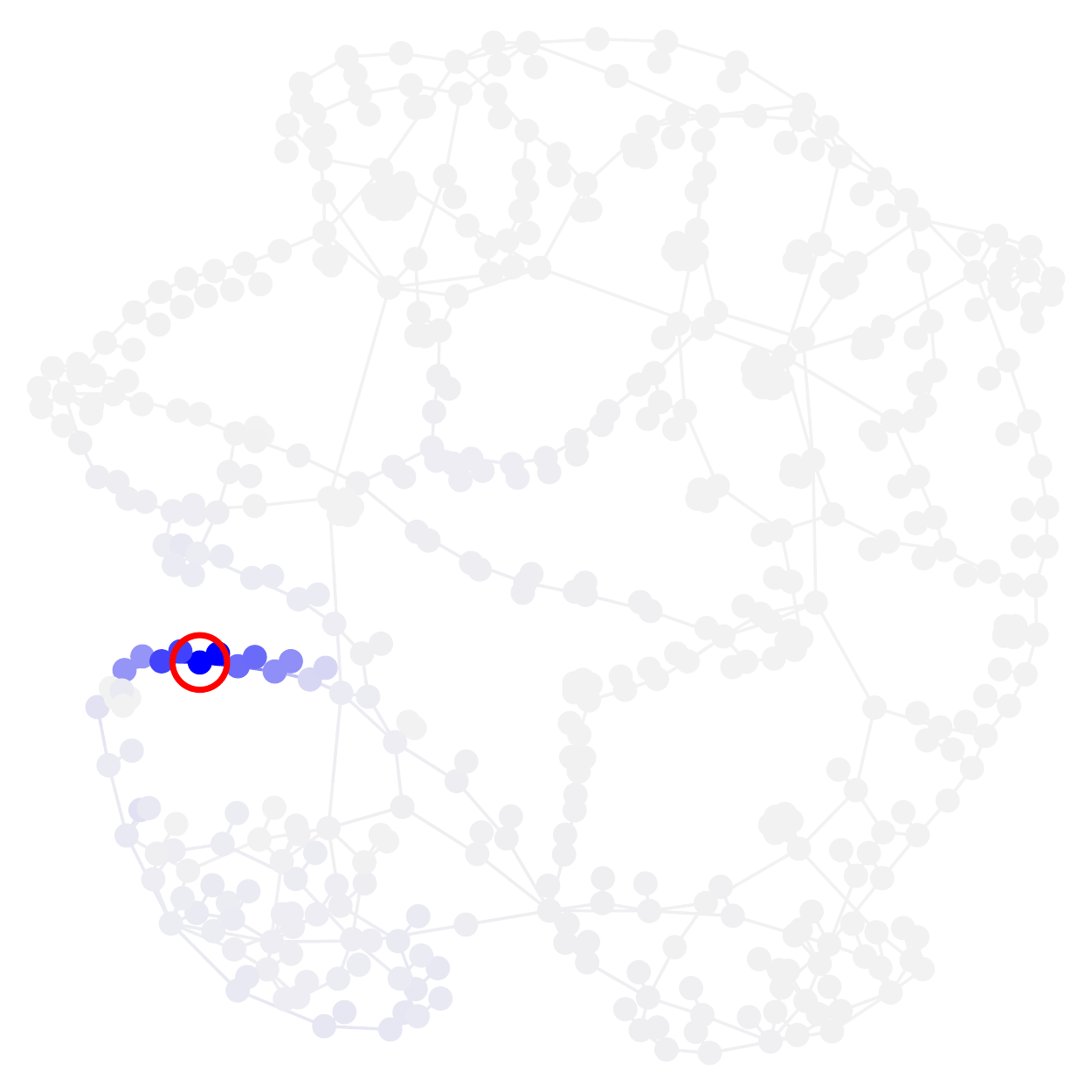}&
        \includegraphics[width=.23\textwidth]{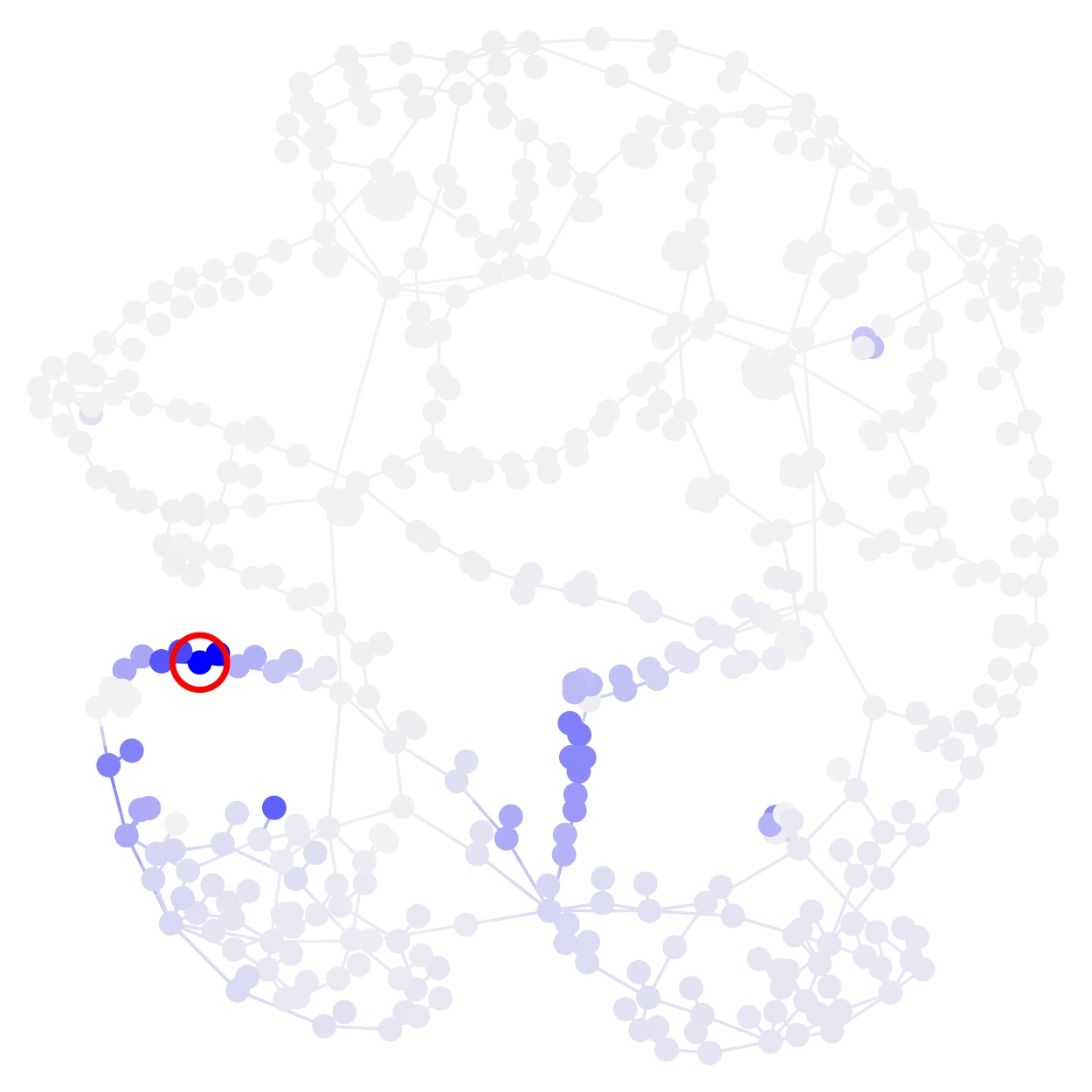}&
        \includegraphics[width=.23\textwidth]{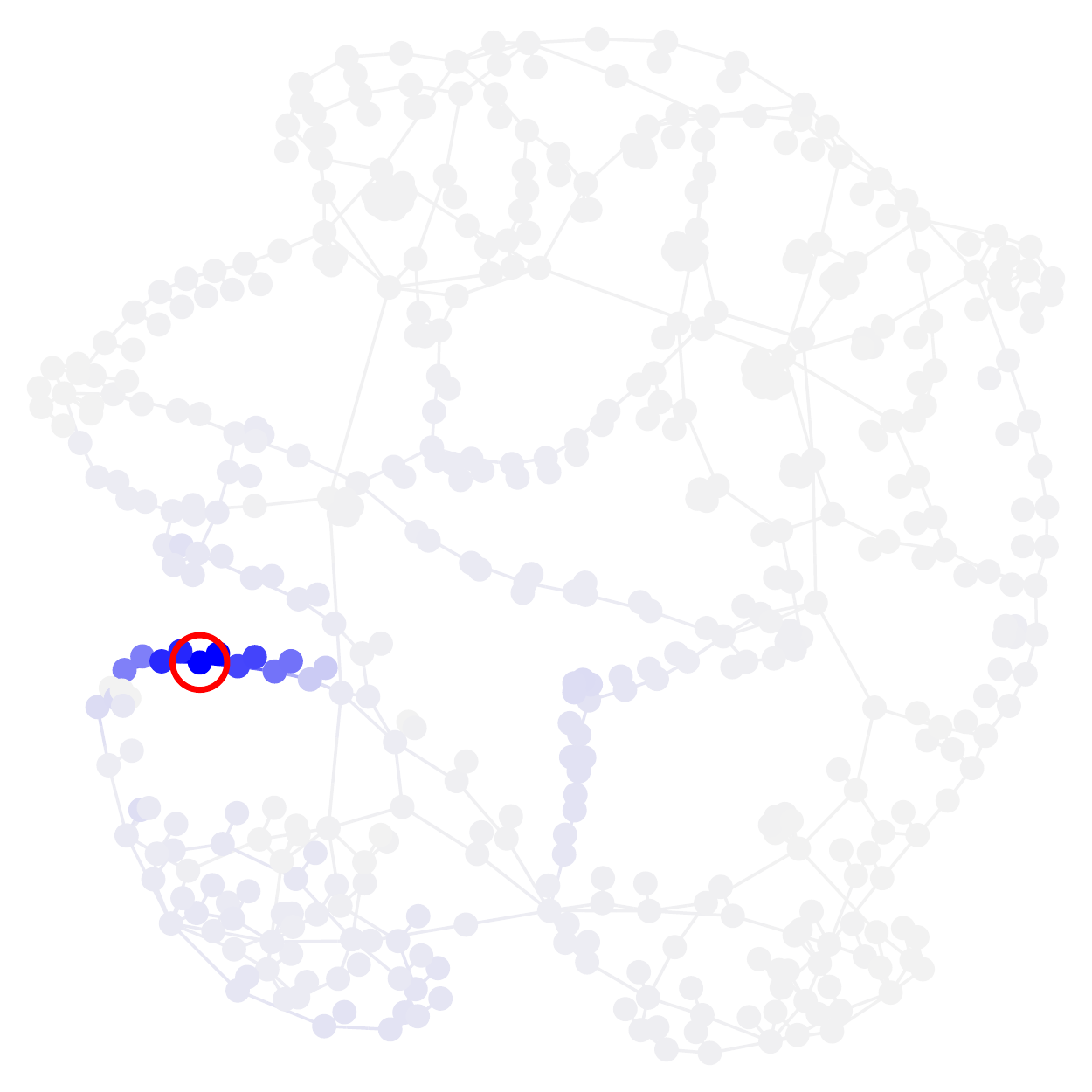}&
        \includegraphics[width=.23\textwidth]{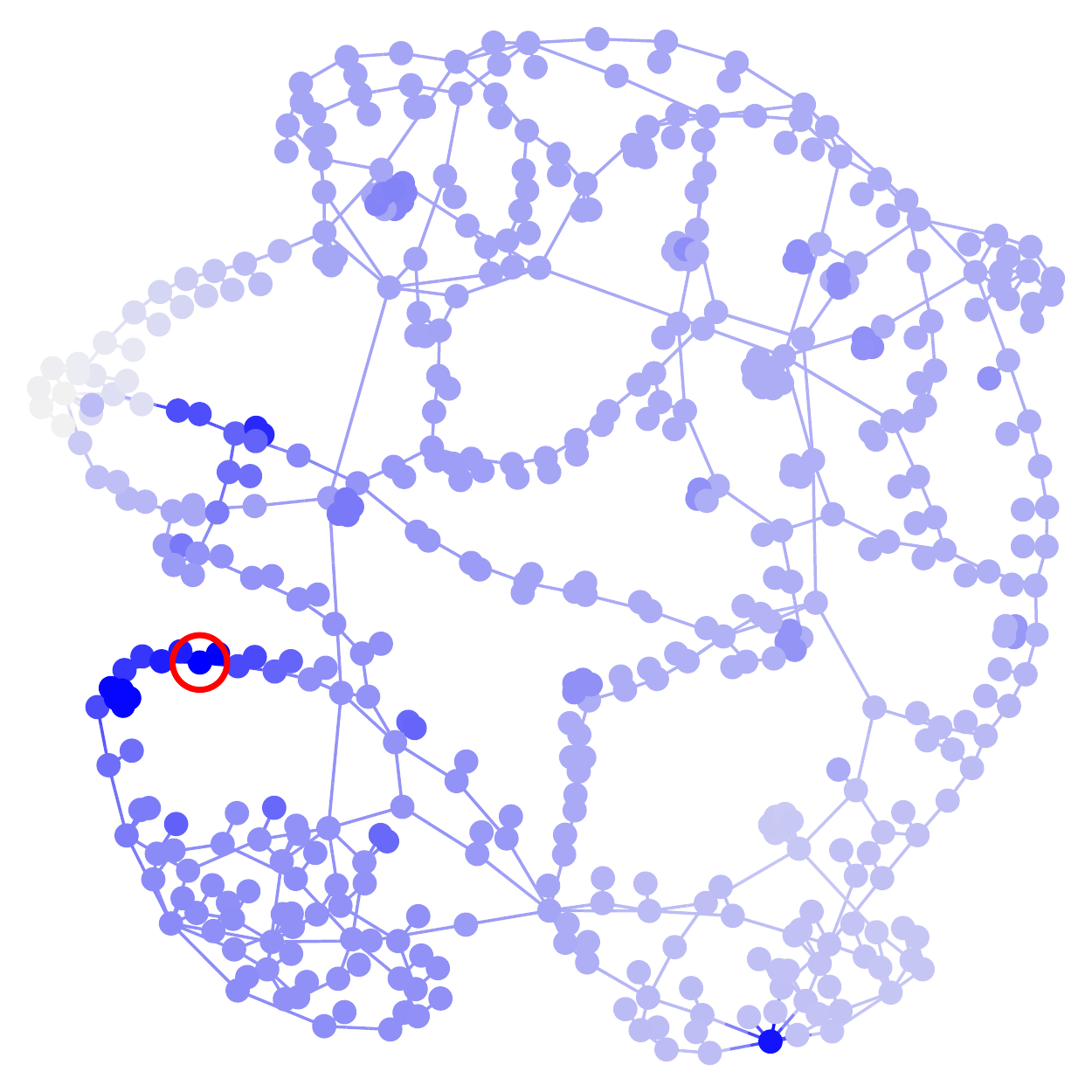}
    \end{tabular}};
    \end{tikzpicture}
    \caption{Spread of empirical sensitivity coefficients $\overline{C}_{ij}/\overline{C}_{jj}$ on $\cG$ for dynamic optimization (top), stochastic optimization (second row), PDE optimization (third row), and network optimization (bottom) problem for different values of $(a,b)$. Red circles denote perturbation point, dark blue approaches one, and white approaches zero.}\label{fig:heatmap}
\end{figure}
\begin{figure}\centering
  \begin{tikzpicture}
    \node at (0,0){
      \begin{tabular}{@{}c@{}c@{}c@{}c@{}}
        Case 1& Case 2& Case 3& Case 4\\
        \includegraphics[width=.235\textwidth]{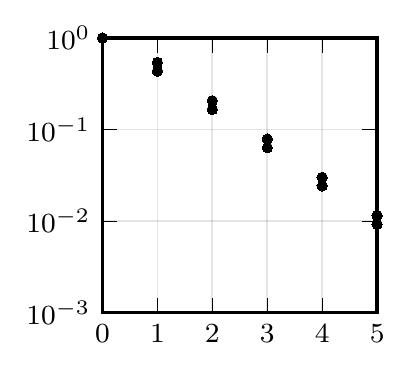}&
        \includegraphics[width=.235\textwidth]{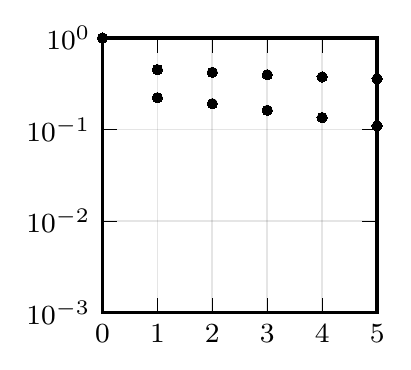}&
        \includegraphics[width=.235\textwidth]{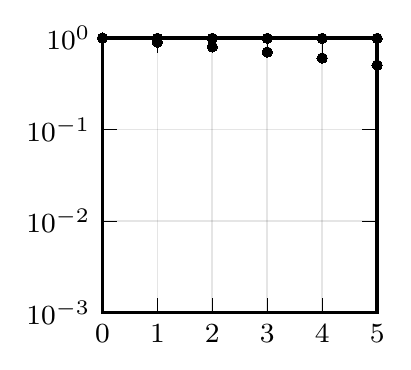}&
        \includegraphics[width=.235\textwidth]{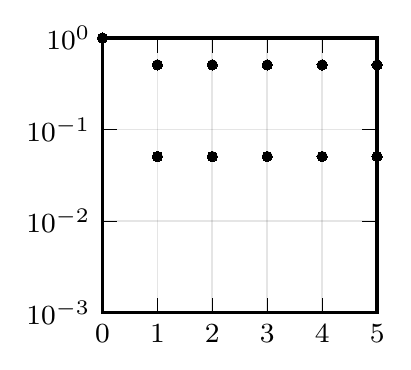}\\
        \includegraphics[width=.235\textwidth]{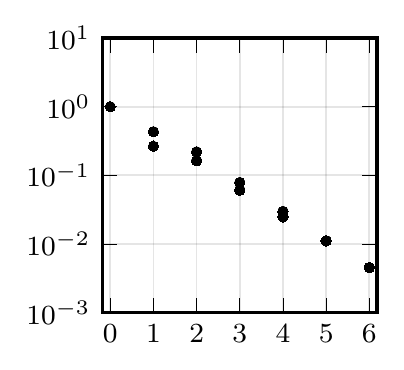}&
        \includegraphics[width=.235\textwidth]{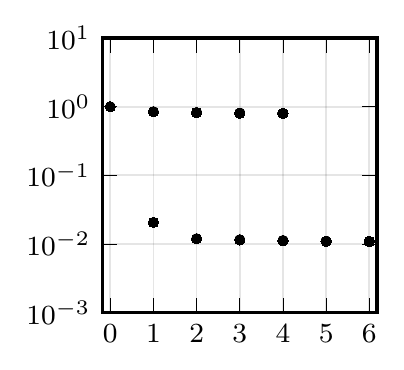}&
        \includegraphics[width=.235\textwidth]{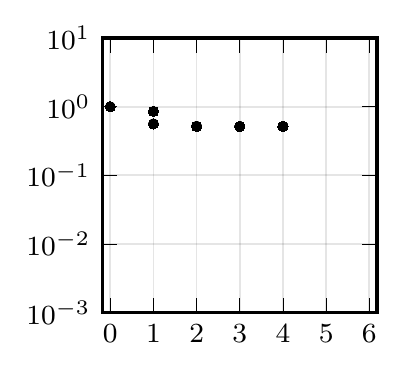}&
        \includegraphics[width=.235\textwidth]{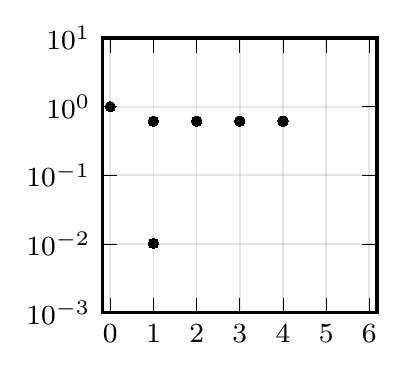}\\  
        \includegraphics[width=.235\textwidth]{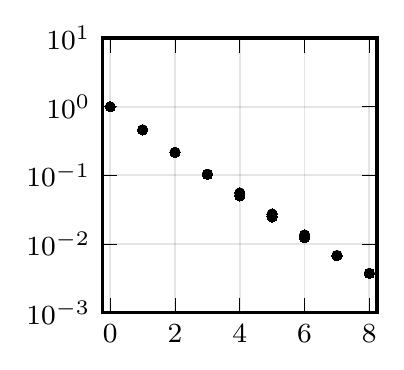}&
        \includegraphics[width=.235\textwidth]{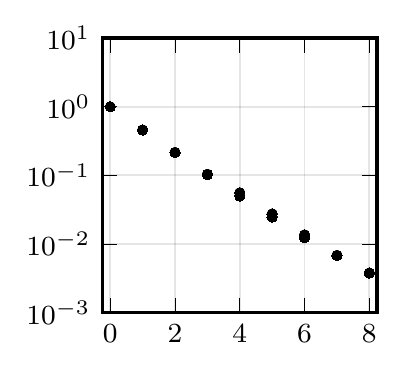}&
        \includegraphics[width=.235\textwidth]{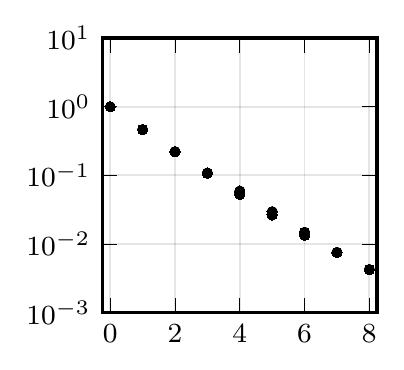}&
        \includegraphics[width=.235\textwidth]{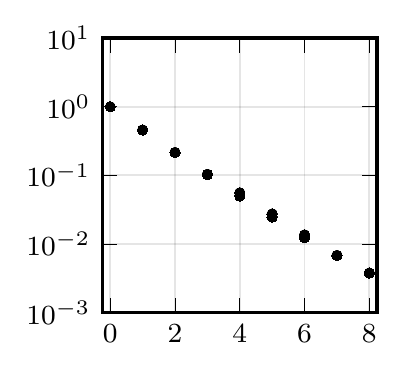}\\  
        \includegraphics[width=.235\textwidth]{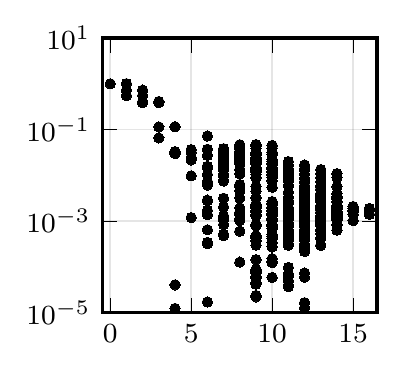}&
        \includegraphics[width=.235\textwidth]{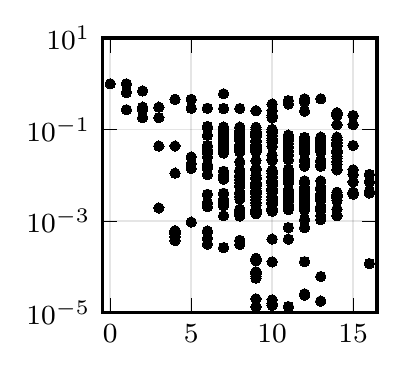}&
        \includegraphics[width=.235\textwidth]{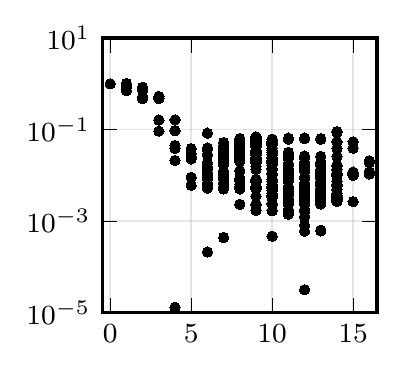}&
        \includegraphics[width=.235\textwidth]{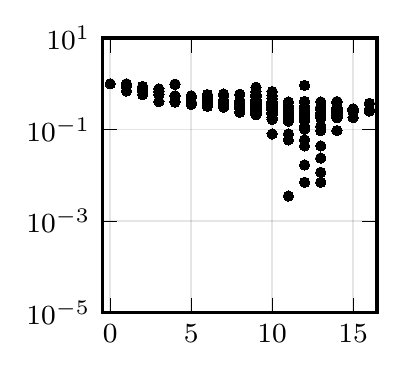}
    \end{tabular}};
    \node[rotate=90] at (-6.3,0) {$\overline{C}_{ij}/\overline{C}_{jj}$};
    \node at (0,-6.3) {$d_\cG(i,j)$};
  \end{tikzpicture}
  \caption{Scatter plots of sensitivity coefficients $\overline{C}_{ij}/\overline{C}_{jj}$ versus $d_\cG(i,j)$ for dynamic optimization (top), stochastic optimization (second row), PDE optimization (third row), and network optimization (bottom) for different values of $(a,b)$.}\label{fig:scatter}
\end{figure}

{Finally, we compare the empirically observed sensitivity decay rate with the theoretically calculated ones. We have computed $(\Upsilon,\rho)$ based on the upper bounds obtained in Theorem \ref{thm:main} with $\epsilon=0$ (note that $\epsilon$ cannot be exactly zero, but it can be chosen arbitrarily small). The result is shown in Table \ref{tbl:uprho}. Here, we have excluded network optimization, which has inequality constraints. For all cases, the decay rate $\rho$ is very close to $1$; clearly, these theoretical decay rates are much slower than the empirically observed decay rates. This implies that the theoretical upper bound of the decay rate is very conservative, and in practice, we have a much faster decay rate. Thus, our results in Section \ref{sec:sens}, \ref{sec:uniform} should be used for theoretical characterization rather than actual computations.
  \begin{table}\centering\footnotesize
    \caption{Theoretically calculated sensitivity decay parameters $(\Upsilon,1-\rho)$.}\label{tbl:uprho}
    \begin{tabular}{|c|@{}c@{}|@{}c@{}|@{}c@{}|@{}c@{}|}
      \hline
      &Case 1&Case 2&Case 3&Case 4\\
      \hline
      Dyn. &  $(26.1,0.0265)$&$(8910,8.88\times 10^{-5})$&$(59900,1.30\times10^{-5})$&$(349,0.0028)$\\
      \hline
      Stoch. & $(817000,7.09\times10^{-7})$&$(6.72\times10^6,9.01\times10^{-8})$&$(677000,1.03\times10^{-6})$&$(627000,1.20\times10^{-6})$\\
      \hline
      PDE&$(1.66,0.0275)$&$(0.721,0.0684)$&$(1.69,0.0269)$&$(0.612,0.0801)$\\
      \hline
    \end{tabular}  
  \end{table}
}

\section{Conclusions}\label{sec:conc}
We have presented sensitivity analysis for graph-structured nonlinear programs (problems whose structure is induced by a graph). Our results indicate that the sensitivity of the solution at a given location decays exponentially with respect to the distance to the perturbation point. This result holds under the strong second-order sufficiency condition (SSOSC) and the linear independence constraint qualification (LICQ). We show that sensitivity decay depends on the singular values of the submatrices of the Lagrangian Hessian matrix; as such, uniform boundedness conditions for such singular values are essential. {We have shown that the singular values can be uniformly bounded under uniform regularity conditions: uniformly bounded Lagrangian Hessian, uniform SSOSC, and uniform LICQ. Furthermore, we have shown that uniformly bounded Lagrangian Hessian can be obtained from uniformly bounded graph degree and second-order derivatives, and uniform SSOSC and LICQ can be guaranteed under uniform SSOSC and LICQ at problem blocks.} Qualitatively, these conditions can be interpreted as having sufficiently strong positive curvature in the objective and flexibility in the constraints. Our numerical studies (for dynamic optimization, stochastic optimization, network optimization, and PDE optimization) confirm that, if the graph-structured problem exhibits such properties, sensitivity indeed decays exponentially. As part of future work, we are interested in exploring the propagation of sensitivity in specific problem classes. A sensitivity decay property for continuous-time (infinite-dimensional) dynamic optimization problems has been recently established in \cite{grune2019sensitivity,grune2020exponential,grune2020abstract}. We are interested in studying the limiting behavior of our graph sensitivity results to establish a similar result for such problems. {Also, we are interested in establishing tighter local sensitivity bounds for inhomogeneous networks, which may arise in multi-scale/multi-physical systems.}

\section*{Acknowledgments}

We are grateful to the 3 anonymous referees whose comments have greatly improved the paper. This material is based upon work supported by the U.S. Department of Energy, Office of Science, Office of Advanced Scientific Computing Research (ASCR) under Contract DE-AC02-06CH11347 and by NSF through award CNS-1545046. We also acknowledge support from the Grainger Wisconsin Distinguished Graduate Fellowship.

\bibliographystyle{siamplain}
\bibliography{Sensitivity_NLP_final}
\vspace{0.1cm}
\begin{flushright}
	\scriptsize \framebox{\parbox{2.5in}{Government License: The
			submitted manuscript has been created by UChicago Argonne,
			LLC, Operator of Argonne National Laboratory (``Argonne").
			Argonne, a U.S. Department of Energy Office of Science
			laboratory, is operated under Contract
			No. DE-AC02-06CH11357.  The U.S. Government retains for
			itself, and others acting on its behalf, a paid-up
			nonexclusive, irrevocable worldwide license in said
			article to reproduce, prepare derivative works, distribute
			copies to the public, and perform publicly and display
			publicly, by or on behalf of the Government. The Department of Energy will provide public access to these results of federally sponsored research in accordance with the DOE Public Access Plan. http://energy.gov/downloads/doe-public-access-plan. }}
	\normalsize
\end{flushright}

\end{document}